\def\R{\mathbb{R}}
\def\d{\partial}
\def\ba{\begin{align}}
	\def\ea{\end{align}}
\def\bp{\begin{proof}}
	\def\ep{\end{proof}}
\theoremstyle{plain}
\newtheorem{theorem}{Theorem}[section]
\newtheorem{proposition}[theorem]{Proposition}
\newtheorem{lemma}[theorem]{Lemma}
\newtheorem{corollary}[theorem]{Corollary}
\theoremstyle{definition}
\newtheorem{definition}[theorem]{Definition}
\newtheorem{conjecture}[theorem]{Conjecture}
\newtheorem{remark}[theorem]{Remark}
\def\R{\mathbb{R}}
\def\d{\partial}
	\def\bp{\begin{proof}}
		\def\ep{\end{proof}}
	\def\R{{\cal R}}
	\def\Lc{{\mathcal L}}
	\def\c{{\mathfrak c}}
	\def\R{\mathbb{R}}
	\def\d{\partial}
\begin{document}
		
		\title[Rigidity of static domains in hyperbolic space]{Rigidity of  non-compact static domains in hyperbolic space via positive mass theorems}
		
		\author{S\'{e}rgio Almaraz}
		\address{Universidade Federal Fluminense (UFF) - Instituto de Matem\'{a}tica\\
			Rua M\'{a}rio Santos Braga S/N  24020-140 Niter\'{o}i, RJ, Brazil}
		\email{sergioalmaraz@id.uff.br}
		\author{Levi Lopes de Lima}
		\address{Universidade Federal do Cear\'a (UFC),
			Departamento de Matem\'{a}tica, Campus do Pici, Av. Humberto Monte, s/n, Bloco
			914, 60455-760,
			Fortaleza, CE, Brazil.}
		\email{levi@mat.ufc.br}
		\thanks{
			S. Almaraz has been supported by FAPERJ-202.802/2019, L.L. de Lima has been suported by CNPq  
			312485/2018-2, and both authors have been supported by 
			FUNCAP/CNPq/PRONEX 00068.01.00/15. Part of this article was written while the second author was visiting IHES (Bures-sur-Yvette) and CAMGSD/IST (Lisbon). He would like to thank these institutions for the financial support.	}

		\begin{abstract}
			We single out a notion of staticity  which  applies to any domain in hyperbolic space whose boundary is a non-compact totally umbilical hypersurface. For (time-symmetric) initial data sets modeled at infinity on any of these latter examples, we formulate and prove a positive mass theorem in the spin category under natural dominant energy conditions (both in the interior and along the boundary) whose rigidity statement retrieves, among other things, a sharper version of a recent result by Souam \cite{souam2021mean} to the effect that no such hypersurface admits a compactly supported deformation keeping the original lower bound on the mean curvature. A key ingredient in our approach is the consideration of a family of elliptic boundary conditions on spinors interpolating  between chirality and MIT bag boundary conditions.   
		\end{abstract}

		\maketitle

		\section{Introduction and statements of the rigidity results}\label{intro}

		A {\it{vacuum spacetime}} is a Lorentzian manifold $(\widetilde M^{n+1}, \widetilde
		g)$ with signature $(-,+,...,+)$ satisfying the vacuum Einstein field equation 
		\begin{equation}\label{eq:vacuum}
			{\rm Ric}_{\widetilde g}-\frac{1}{2}R_{\widetilde g}{\widetilde
				g}+\widetilde\Lambda\widetilde g=0,
		\end{equation} 
		where $\widetilde\Lambda\in \R$ is said to be the {\it{cosmological constant}}.
		By taking trace we see that the scalar curvature $R_{\widetilde g}$ is constant
		and hence (\ref{eq:vacuum}) is equivalent to 
		\begin{equation}\label{eq:vacuum:2}
			{\rm Ric}_{\widetilde g}=\Lambda\widetilde g, \quad
			\Lambda=\frac{2}{n-1}\widetilde\Lambda.
		\end{equation}
		We may assume that $\Lambda=\epsilon n$, $\epsilon=0,\pm 1$. Here, we will be
		mainly  interested in the case $\epsilon=-1$, so that
		$\widetilde\Lambda=-n(n-1)/2$, the {\em negative} cosmological constant case.

		If $\widetilde M$ carries a nonzero time-like Killing vector field $X$ such
		that its orthogonal distribution is integrable, then $\widetilde M$ is said to
		be a {\it{static spacetime}}. In this case, if one fixes a space-like slice
		$M\hookrightarrow\widetilde M$ where $X$ never vanishes, then it inherits a Riemannian metric $g$ and one
		can write 
		\begin{equation}\label{eq:metric:spacetime}
			\widetilde g=-V^2dt^2+g, \quad V=\sqrt{-\widetilde g(X,X)}.
		\end{equation}
	  The Einstein
		equation \eqref{eq:vacuum:2} can be written on this slice as
		\begin{equation}\label{eq:static}
			\begin{cases}
				\nabla^2_gV+\Lambda Vg-V{\rm Ric}_g=0,
				\\
				\Delta_gV+\Lambda V=0.
			\end{cases}
		\end{equation}
		
		The discussion above motivates the following classical concept.
		
			\begin{definition}
					A Riemannian manifold $(M,g)$ is said to be  {\it{static}} if there is a
					nontrivial solution $V$ of \eqref{eq:static}, so called a {\it{static
							potential}}. 
		\end{definition}
		
		Static manifolds play a central role in the theory. For instance, they may be
		used as background  spaces when defining mass-type invariants for
		initial data sets in the context of isolated gravitational systems. Roughly
		speaking, one imposes that the given (time-symmetric) initial data set $(M',g')$ approaches at
		infinity a {\em static} manifold $(M,g)$, so that the corresponding mass-type
		invariant somehow extracts the rate of convergence of $g'$ to $g$ as one goes to
		infinity. 	
		One should emphasize
		that
in general the mass invariant so obtained 
		must be thought of as a {linear functional} on the
		space of static potentials of $(M,g)$. We refer to \cite[Section
		2]{de2021conserved} for a discussion of this approach to
		defining conserved quantities in General Relativity.

		The simplest example of a static manifold is $\R^n$ with the canonical flat
		metric $\delta$. A geometric invariant, called the {\it{ADM mass}}, is defined
		for {\em asymptotically flat } manifolds (i.e. manifolds whose geometry at
		infinity approaches, in a suitable sense, that of $(\R^n,\delta$); in this case,
		$\epsilon=0$) and positive mass theorems have been proved in this setting
		\cite{schoen1979proof,witten1981new,bartnik1986mass,schoen2017positive,lohkamp2016higher}.
		More generally, we can consider other static background manifolds, the simplest
		one being 
		the hyperbolic space $\mathbb{H}^n$, which is a static manifold with
		$\epsilon=-1$. Its space of static potentials is identified with the
		($n+1$)-dimensional Minkowski spacetime and positive mass theorems have been
		formulated and proved in this setting as well
		\cite{wang2001mass,chrusciel2003mass,chrusciel2019hyperbolic}. 
		Concerning these contributions recall that,  from the dynamical viewpoint, static manifolds may be regarded as the stationary solutions  of the theory. This naturally leads to 
	the basic question on whether such a manifold may be deformed into another  initial data set satisfying the relevant dominant energy condition and having
	the same asymptotic behavior at infinity. If no such deformation exists we say
	that the given static manifold is {\em rigid}. Since in General Relativity the total energy is
	measured by means of a certain surface integral at spatial infinity, rigidity rules out the
	existence of exotic initial data sets lying in the lowest energy level. From this perspective, the rigidity statements in the positive mass theorems referred to above make sure that $\mathbb R^n$ and $\mathbb H^n$ are rigid in this sense.

		Partially motivated by the so-called AdS/BCFT correspondence
		\cite{takayanagi2011holographic}, a notion of mass for asymptotically hyperbolic
		manifolds with a non-compact boundary, modeled at infinity on the
		half-hyperbolic space $\mathbb H^n_{0}$, which is the bordered non-compact
		manifold obtained by cutting the standard hyperbolic space $\mathbb H^n$ along a
		totally geodesic hypersurface, has been introduced in \cite{almaraz2020mass}.
		In fact, this kind of initial data set may be viewed as an example of a static manifold with  boundary, a notion we isolate in Definition \ref{stat:def:bd} below. Indeed, if our initial data set $(M,g)$ carries a (possibly non-compact) boundary, say $\Sigma$, we argue that it is natural to add  to 
		\eqref{eq:static} the boundary conditions
		\begin{equation}\label{eq:static:bd:cond}
			\begin{cases}
				\pi_g-\lambda \bar g=0,
				\\
				\frac{\d}{\d\eta}V-\lambda V=0,
			\end{cases}
		\end{equation}
		where $\lambda\in\R$, 
		$\pi_g$ is the second fundamental form associated  to the shape operator $\nabla\eta$ of $\Sigma$ with
		respect to the outward unit normal vector  $\eta$, and $\bar g$ is the
		restriction of $g$ to $\Sigma$ 	(in our convention, the shape operator of the boundary of a round unit ball in
		$\R^n$ equals the identity map). As already explained in \cite[Remark 2.2]{almaraz2020mass}, this set of equations arises	 naturally as follows. 
	
		It is known that $\widetilde g$ satisfies \eqref{eq:vacuum} if and only if it is 
	critical for the Einstein-Hilbert functional
	$$
	\widetilde g\mapsto \int_{\widetilde M} (R_{\widetilde g}-2\widetilde\Lambda)\,dv_{\widetilde g},
	$$
	defined on the space of Lorentzian metrics on $\widetilde M^{n+1}$.
	In case $\widetilde M$ has a nonempty boundary $\d\widetilde M$, it is natural to
	consider instead the Gibbons-Hawking-York functional
	\begin{equation}\label{ghy}
	\mathcal F:\widetilde g\mapsto \int_{\widetilde M} (R_{\widetilde g}-2\widetilde\Lambda)dv_{\widetilde g}+
	2\int_{\d \widetilde M} (H_{\widetilde g}-\widetilde\lambda)\,d\sigma_{\widetilde g},
	\end{equation}
	where $\widetilde g$ runs over the space of all Lorentzian metrics on $\widetilde M$ with respect to which $\partial\widetilde M$ is time-like and 
	$H_{\widetilde g}={\rm tr}_{\widetilde g|_{\partial\widetilde M}}\pi_{\widetilde g}$ is the mean curvature of the embedding $\partial\widetilde M\hookrightarrow\widetilde M$. In particular, 
	the geometry of $\d \widetilde M$ now should play a role.  Indeed, critical metrics for $\mathcal F$ are solutions of
	\eqref{eq:vacuum} which additionally satisfy the boundary condition
	\[
	\pi_{\widetilde g}-H_{\widetilde g}\tilde g|_{\d \widetilde M}+\widetilde\lambda\widetilde g|_{\d
		\widetilde M}=0,
	\]
	or equivalently,
	\[
		\pi_{\widetilde g}=\lambda\widetilde g|_{\d \widetilde M}, \quad
	\lambda=\frac{1}{n-1}\widetilde\lambda.
	\]
	 Now, if $X$ is a time-like Killing
	vector field on $\widetilde M$, tangent to $\d \widetilde M$ and with integrable
	orthogonal distribution, we can write $\widetilde g$ in the form
	\eqref{eq:metric:spacetime} and the triple $(M,g,\Sigma)$, where $\Sigma=\partial M=\partial \widetilde M\cap M$, inherits the equations \eqref{eq:static}
	and \eqref{eq:static:bd:cond}. This discussion naturally leads to the following concept, which plays a central role in this work.

	\begin{definition}\label{stat:def:bd}
	 We say that the triple $(M,g,\Sigma)$, $\Sigma=\partial M$, is a
	 {\em static manifold with boundary} with the pair $(\widetilde \Lambda,\widetilde \lambda)$ as {\em cosmological constants}  
	 if there exists $V\not\equiv 0$ such that \eqref{eq:static}
	 and \eqref{eq:static:bd:cond} are satisfied. Any such $V$ is termed a {\em static potential}.  
	\end{definition}

\begin{remark}\label{new:ref}
	A related notion of staticity appears in \cite{cruz2020critical,ho2020deformation} in connection with deformation properties of the scalar and mean curvatures.  
	\end{remark}

	The simplest example of such a manifold is the Euclidean half-space $\mathbb R^n_+$, which is obtained by cutting $\mathbb R^n$ along a hyperplane. In this case, $(\widetilde \Lambda,\widetilde \lambda)=(0,0)$ and the corresponding positive mass theorem is proved in \cite{almaraz2014positive}; see Remark \ref{asymt:flat:gr} below. Next to it we find the hyperbolic half-space $\mathbb H^n_{0}$ mentioned earlier, where $(\widetilde \Lambda,\widetilde \lambda)=(-n(n-1)/2,0)$. 
	A positive mass theorem in this setting has been proved in the spin
		category \cite[Theorem 5.4]{almaraz2020mass}.
		An immediate consequence of the corresponding rigidity statement \cite[Theorem
		1.1]{almaraz2020mass} is the following non-deform\-ability result which has been
		rediscovered in \cite{souam2021mean}. 
		
		\begin{theorem}\label{souam:red}
			A totally geodesic hypersurface in $\mathbb H^n$ can not be compactly deformed
			(as a hypersurface of $\mathbb H^n$) while  keeping it mean convex (that is, with non-negative mean curvature
			everywhere). 
		\end{theorem}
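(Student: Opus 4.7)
The plan is to deduce Theorem \ref{souam:red} as a direct consequence of the rigidity part of the positive mass theorem for asymptotically hyperbolic manifolds with non-compact boundary \cite[Theorem 1.1]{almaraz2020mass}. After applying a hyperbolic isometry, I would assume that the given totally geodesic hypersurface is precisely the boundary hypersurface $P$ of the model half-hyperbolic space $\mathbb H^n_{0}$. Arguing by contradiction, suppose $\Sigma'$ is a compactly supported, nontrivial deformation of $P$ inside $\mathbb H^n$ for which the mean curvature (with respect to the normal pointing into the complement of $\mathbb H^n_{0}$) remains everywhere non-negative.

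The next step is to convert this deformation into an admissible initial data set. Let $M'\subset \mathbb H^n$ be the closed region bounded by $\Sigma'$ on the same side as $\mathbb H^n_{0}$, equipped with the restricted hyperbolic metric $g':=g_{\mathbb H}|_{M'}$. Then $(M',g',\Sigma')$ is a time-symmetric initial data set whose interior scalar curvature is identically $-n(n-1)$ and whose boundary mean curvature satisfies $H_{g'}\geq 0$. These are precisely the dominant energy conditions for the background model $(\mathbb H^n_{0},g_{\mathbb H},P)$, for which $(\widetilde\Lambda,\widetilde\lambda)=(-n(n-1)/2,\,0)$. Because the deformation has compact support, $(M',g',\Sigma')$ coincides with the model outside a compact set, so it falls within the asymptotic framework of \cite{almaraz2020mass}; moreover, since its mass is computed as a limit of surface integrals over large coordinate hemispheres that eventually enclose the support of the deformation, and on these hemispheres the data equal those of the model, the mass functional vanishes identically.

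Applying the rigidity part of \cite[Theorem 1.1]{almaraz2020mass}, the vanishing of the mass forces $(M',g',\Sigma')$ to be isometric to $(\mathbb H^n_{0},g_{\mathbb H},P)$. In particular $\Sigma'$ is a totally geodesic hypersurface of $\mathbb H^n$ that agrees with $P$ outside a compact set. Totally geodesic hypersurfaces of $\mathbb H^n$ are real-analytic and are uniquely determined by their restriction to any open set, so $\Sigma'=P$, contradicting the nontriviality of the deformation.

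I do not anticipate any serious obstacle, since the heavy lifting is entirely packaged into the positive mass theorem proved earlier in the paper. The one point requiring genuine care is the bookkeeping of sign conventions along the boundary: one must verify that mean-convexity of $\Sigma'$ in the sense of Theorem \ref{souam:red} translates to the correct inequality $H_{g'}\geq (n-1)\lambda=0$ activating the boundary component of the dominant energy condition in \cite[Theorem 5.4]{almaraz2020mass}. Once this compatibility is confirmed, the remainder of the argument is purely formal, and the same scheme will evidently apply to any mean-convexity lower bound matched to a nonzero $\lambda$, provided a suitable static background with $\widetilde\lambda\neq 0$ is at one's disposal.
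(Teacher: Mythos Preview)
Your proposal is correct and follows exactly the route the paper indicates: the paper states Theorem \ref{souam:red} as ``an immediate consequence of the corresponding rigidity statement \cite[Theorem 1.1]{almaraz2020mass}'' without spelling out the details, and what you have written is precisely the natural unpacking of that sentence. Your care with the final identification step (passing from the abstract isometry to $\Sigma'=P$ via analytic continuation of totally geodesic hypersurfaces) and with the sign bookkeeping on the boundary mean curvature is appropriate, and your closing remark about extending the scheme to nonzero $\lambda$ anticipates exactly what the paper does next in Corollary \ref{souam:cor}.
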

		
		\begin{remark}\label{asymt:flat:gr}
			The corresponding statement for a hyperplane in $\mathbb R^n$ follows
			immediately from the rigidity statement of the positive mass theorem for
			asymptotically flat manifolds with a non-compact boundary proved in
			\cite{almaraz2014positive}. This non-existence of {\em compactly supported}, mean convex deformations of $\mathbb R^{n-1}\hookrightarrow\mathbb R^n$ has also been recovered using an appropriate symmetrization process
			in 
			\cite{gromov2019mean}, which contains an in-depth investigation of the interplay between lower bounds for the scalar curvature (in the interior) and the mean curvature (along the boundary) in more general spaces. {As the action (\ref{ghy}) makes it clear, this fruitful interaction between scalar and mean curvatures already manifests itself in General Relativity}.
		\end{remark}
		
The argument in \cite{souam2021mean} leading to Theorem \ref{souam:red} is quite elementary in the sense that it relies on Aleksandrov's Tangency Principle. 
	In fact, this same reasoning also yields a corresponding rigidity statement for all non-compact {\em totally umbilical} hypersurfaces in $\mathbb H^n$ \cite[Theorem 2]{souam2021mean}. As we shall see in Propositions \ref{secformSigma} and \ref{desc:horo} below, the non-compact domains in $\mathbb H^n$ having  such hypersurfaces as boundaries constitute examples of static manifolds with boundary with cosmological constants of the type $(-n(n-1)/2,\widetilde\lambda)$, for some  $\widetilde\lambda\in[-(n-1),n-1]$. This suggests that, similarly to the contents of Theorem \ref{souam:red} and Remark \ref{asymt:flat:gr} above, the results in \cite{souam2021mean} may alternatively be retrieved as a consequence of more general rigidity statements associated to  appropriate positive mass theorems for initial data sets modeled at infinity on such static domains. The purpose of this paper is precisely to confirm this expectation. 
	Besides placing all the rigidity results mentioned above in their 
 proper conceptual framework, our main contributions (Theorems \ref{main:pmt} and \ref{main:th:horo}) represent a substantial improvement in the sense that their applications to rigidity phenomena comprise not only {purely extrinsic}, compactly supported deformations of the given non-compact boundary but also more general (intrinsic and not necessarily compactly supported)  deformations preserving suitable dominant energy conditions (both in the interior and along the boundary); see Theorem \ref{rig:state} and Remarks \ref{ext:best} and \ref{horo}. 
 Our approach relies on the adoption of a family of elliptic boundary conditions for spinors which somehow interpolates between the well-known chirality and MIT bag boundary conditions (Definition \ref{theta:cond}), as this guarantees, among other things, that the boundary integrals in the Witten-type mass formulas (\ref{maintheo2}) and (\ref{witten:horo}) have the expected shapes.

We now explain our main results in detail. 
		Recall that the so-called {hyperboloid model} for hyperbolic space is given
		by 
		$$
		\mathbb H^n=\{x\in\R^{1,n}\,|\:\langle x,x\rangle_{1,n}=-1\}\subset \R^{1,n},
		$$ 
		where $\R^{1,n}$ is the Minkowski space with the flat Lorentzian metric 
		\[
		\langle
		x,x\rangle_{1,n}=-x_0^2+x_1^2+...+x_n^2,\quad  x=(x_0,x_1,\cdots,x_n)\in\mathbb R^{1,n}.
		\]
		The induced (Riemannian) metric on $\mathbb H^n$ is 
		$$
		b=\frac{dr^2}{1+r^2}+r^2g_{\mathbb S^{n-1}},
		$$
		where $g_{\mathbb S^{n-1}}$ stands for the round metric on the unit sphere
		$\mathbb S^{n-1}$, $r=|x'|_\delta$, $x'=(0,x_1,\cdots,x_n)$ and  
		\[
		|x'|_\delta=\sqrt{x_1^2+...+x_n^2}.
		\] 
		It is immediate that $(\mathbb H^n,b)$ is a complete static manifold
		with $\widetilde\Lambda=-n(n-1)/2$ whose space of static potentials is given by  $\mathcal N_b=[V_{(0)},V_{(1)},\cdots,V_{(n)}]$, where $V_{(i)}=x_i|_{\mathbb H^n}$.

		For each $s\in \mathbb R$ consider $\mathbb H^n_{s}=\{x\in\mathbb H^n;x_1\leq
		s\}$. We will see in next proposition that $\Sigma_s=V_{(1)}^{-1}(s)$, the
		boundary of $\mathbb H^n_s$, is a totally umbilical hypersurface of $\mathbb H^n$.
		In fact, the family $\{\Sigma_s\}_{s\neq 0}$ constitutes the {\em equidistant
			hypersurfaces} of $\Sigma_0$, which is totally geodesic. 
		Notice that the outward pointing unit vector field along $\Sigma_s$ is
		$\eta_s=\nabla_bV_{(1)}/|\nabla_b V_{(1)}|$.

		\begin{proposition}\label{secformSigma}
			Along $\Sigma_s$  we have $|\nabla_{b}V_{(1)}|=\sqrt{1+s^2}$. Also, the second
			fundamental form $\Pi_s$ of $\Sigma_s$ is given by
			\[
				\Pi_s=\lambda_s\gamma_s,\quad \lambda_s=\frac{V_{(1)}}{|\nabla_b
					V_{(1)}|}=\frac{s}{\sqrt{1+s^2}}. 
			\]
			where $\gamma_s=b|_{\Sigma_s}$ is the induced metric. Moreover, for each $i\neq
			1$,
			\begin{equation}\label{neumanncond}
				\frac{\partial V_{(i)}}{\partial\eta_s}=\lambda_s V_{(i)}.
			\end{equation}
		In particular, the triple $(\mathbb H^n_s,\Sigma_s,b)$ is a static manifold with boundary satisfying 
		\[
		(\widetilde\Lambda,\widetilde\lambda)=(-\frac{n(n-1)}{2},(n-1)\lambda_s), 
		\]
	and the corresponding space of static potentials is 
	\[
	\mathcal N_{b,s}=\left[V_{(0)},V_{(2)},\cdots, V_{(n)}\right].
	\]
		\end{proposition}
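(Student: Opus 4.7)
The plan is to exploit the embedding $\mathbb H^n \hookrightarrow \mathbb R^{1,n}$ and reduce every computation to elementary algebra in Minkowski coordinates. Since each $V_{(i)}$ is the restriction of a linear function on $\mathbb R^{1,n}$, the hyperboloid gradient $\nabla_b V_{(i)}(p)$ is simply the Minkowski-orthogonal projection of the associated constant vector onto $T_p\mathbb H^n = p^{\perp}$, which gives $\nabla_b V_{(i)}(p) = e_i + V_{(i)}(p)\,p$ for $i\geq 1$ and analogously $\nabla_b V_{(0)}(p) = -e_0 + V_{(0)}(p)\,p$. A one-line expansion using $\langle p,p\rangle_{1,n}=-1$ then yields $|\nabla_b V_{(1)}|_b^2 = 1 + V_{(1)}^2$, which equals $1+s^2$ along $\Sigma_s$; this settles the first assertion.

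For the second fundamental form I would combine the standard regular-level-set identity
\[
\Pi_s(X,Y) \;=\; \frac{\mathrm{Hess}_b V_{(1)}(X,Y)}{|\nabla_b V_{(1)}|}, \qquad X,Y \in T\Sigma_s,
\]
with the universal Hessian formula $\mathrm{Hess}_b V_{(i)} = V_{(i)}\,b$. The latter is either a one-line Gauss-formula computation (the ambient Lorentzian Hessian of a linear function vanishes, and the shape operator of $\mathbb H^n \hookrightarrow \mathbb R^{1,n}$ is the identity) or a direct rewriting of the first equation in \eqref{eq:static} for $(\mathbb H^n,b)$ using $\Lambda = -n$ and $\mathrm{Ric}_b = -(n-1)b$. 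Substituting immediately produces $\Pi_s = (V_{(1)}/|\nabla_b V_{(1)}|)\,\gamma_s = \lambda_s\,\gamma_s$, which is the first boundary condition in \eqref{eq:static:bd:cond} with $\lambda = \lambda_s$, hence $\widetilde\lambda = (n-1)\lambda_s$.

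For \eqref{neumanncond} and the identification of $\mathcal N_{b,s}$ I would compute $b(\nabla_b V_{(i)}, \nabla_b V_{(1)})$ directly from the gradient formula above: the cross term $\langle a_i,e_1\rangle_{1,n}$ (with $a_0 = -e_0$, $a_j = e_j$ for $j\geq 1$) vanishes for $i\neq 1$, while the three remaining terms collapse to $V_{(i)}V_{(1)}$; for $i=1$ the cross term contributes an additional $1$. Dividing by $|\nabla_b V_{(1)}|$ and using $V_{(1)}/|\nabla_b V_{(1)}| = \lambda_s$ on $\Sigma_s$ yields $\partial V_{(i)}/\partial\eta_s = \lambda_s V_{(i)}$ for $i\neq 1$, whereas $(\partial_{\eta_s} - \lambda_s)V_{(1)} = 1/\sqrt{1+s^2}$. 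Combined with the fact recorded earlier in the excerpt that the interior static equation on $(\mathbb H^n, b)$ confines any static potential to $[V_{(0)},\ldots,V_{(n)}]$, linearity shows that the second boundary condition cuts out precisely the hyperplane $c_1 = 0$, giving $\mathcal N_{b,s} = [V_{(0)},V_{(2)},\ldots,V_{(n)}]$. No conceptual obstacle is foreseen; the only care required is keeping track of the sign of the timelike coordinate $V_{(0)}$, which flips $e_0$ into $-e_0$ throughout.
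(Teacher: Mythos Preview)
Your proposal is correct and follows essentially the same route as the paper: the paper computes $\nabla_b V_{(i)}=\sum_j\langle\partial_{x_i},\partial_{x_j}\rangle_{1,n}\partial_{x_j}+x_ix$ (which is exactly your ``orthogonal projection'' formula, written coordinatewise), obtains the inner-product identity $\langle\nabla_b V_{(i)},\nabla_b V_{(j)}\rangle=\langle\partial_{x_i},\partial_{x_j}\rangle_{1,n}+x_ix_j$, then uses the staticity equation in the form $\nabla_b^2 V=Vb$ to evaluate $\Pi_s$ and reads \eqref{neumanncond} off the inner-product identity for $i\neq 1$. Your extra line checking that $(\partial_{\eta_s}-\lambda_s)V_{(1)}\neq 0$ to pin down $\mathcal N_{b,s}$ exactly is a welcome addition the paper leaves implicit.
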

		
		\begin{proof}
			A direct computation shows that 
			$$
			\nabla_bV_{(i)}  = \sum_j\langle \partial_{x_i},\partial_{x_j}\rangle_{1,n} \partial_{x_j} +x_ix,
			$$
			where $\{\partial_{x_i}\}_{i=0}^n$ is the standard orthonormal basis. It follows that 
			\begin{equation}\label{formgrad}
				\langle \nabla_b V_{(i)},\nabla_b V_{(j)}\rangle=\langle \partial_{x_i},\partial_{x_j}\rangle_{1,n}+x_ix_j,
					\end{equation}
			so that $|\nabla_b  V_{(1)}|=\sqrt{1+s^2}$ along $\Sigma_s$. Thus, if $X,Y$ are vector fields 
			tangent to $\Sigma_s$,  
			\begin{eqnarray*}
				\Pi_{s}(X,Y) & = & -\langle (\nabla_b)_{X}Y,\eta_s\rangle\\
				& = & -(1+s^2)^{-1/2}\langle (\nabla_b)_XY,\nabla_b V_{(1)}\rangle\\
				& = & (1+s^2)^{-1/2}\langle Y,(\nabla_b)_X\nabla_b V_{(1)}\rangle \\
				& = & (1+s^2)^{-1/2}(\nabla_b^2V_{(1)})(X,Y).
			\end{eqnarray*}
		We now remark that the 
			staticity equation (\ref{eq:static}) implies 
			\[
				\nabla_b^2V=Vb, \quad V\in\mathcal N_b, 
			\]	
				 so we get
			\[
			\Pi_s(X,Y)=s(1+s^2)^{-1/2}\gamma_s(X,Y),
			\]
			as desired.
			 Finally, note that (\ref{neumanncond})
			is equivalent to the identity
			\[
			\langle \nabla_b V_{(i)}, \nabla_b V_{(1)}\rangle=V_{(i)}V_{(1)}, \quad i\neq
			1,
			\]
			which is immediate from (\ref{formgrad}). 
		\end{proof}

		We now define the concept of an asymptotically hyperbolic manifold with
		non-compact boundary having $(\mathbb H^n_s,b,\Sigma_s)$ as an asymptotic model.
		Recall that one can  parameterize $\mathbb H^n$ by polar coordinates
		$(r,\varphi)$, where $r=|x'|_\delta$ as before and
		$\varphi=(\varphi_1,...,\varphi_{n-1})\in \mathbb S^{n-1}$. 
		Let $\{\partial_{\varphi_1},...,\partial_{\varphi_{n-1}}\}$ be an orthonormal frame
		for $g_{\mathbb S^{n-1}}$. Then $\{\mathfrak f_ i\}_{i=1}^{n}$, with
		$\mathfrak f_{a}=r^{-1}\d_{\varphi_{a}}$, $a=1,...,n-1$, and $\mathfrak f_n=\sqrt{r^2+1}\d_r$ is an
		orthonormal frame for $b$.
		Given $s$ as above, for all $r_0$ large enough let us set $\mathbb
		H^n_{s,r_0}=\{x\in\mathbb H^n_s;r(x)\geq r_0\}$.

		\begin{definition}\label{def:as:hyp}
			We say that $(M^n,g, \Sigma)$ is an $s$-{{\em asymptotically hyperbolic}} (briefly, $s$-{\em AH}) manifold if there exist a region
			$M_{\text{ext}}\subset M$ and a diffeomorphism (a chart at infinity)
			\[
				F:\mathbb H_{s,r_0}^n\to  M_{\text{ext}},
			\]
			for some $r_0>0$, such that the induced metric $F^*g$ on $\mathbb
			H^n_{s,r_0}$ satisfies the asymptotic conditions 
			\begin{equation}\label{asympthyp}
				|F^*g-b|_b+\sum_{i=1}^n|\mathfrak f_i(F^*g)|_b+\sum_{i,j=1}^n|\mathfrak f_i\mathfrak f_j(F^*g)|_b=O(r^{-\sigma}), \quad \sigma>n/2.
			\end{equation}
			{We further assume that $r(R_g+n(n-1))\in
			L^1(M)$ and $r(H_g-(n-1)\lambda_s)\in L^1(\Sigma)$, where the radial function $r$ has been smoothly extended to the whole of $M$.}
		\end{definition}

		In the next section we define a notion of mass for this kind of asymptotically hyperbolic manifold with a non-compact boundary. In case the underlying manifold is spin, we will be able to establish the corresponding positive mass theorem under suitable dominant energy conditions. This is the content of our main result, Theorem \ref{main:pmt} below. As a consequence of the corresponding rigidity statement, the following result is easily obtained.

\begin{theorem}\label{rig:state}
Let $(M^n,g, \Sigma)$ be an $s$-AH spin manifold with $R_g\geq -n(n-1)$ and $H_g\geq (n-1)\lambda_s$. Assume further that (\ref{asympthyp}) holds with $\sigma>n$. Then $(M^n,g,\Sigma)=(\mathbb H^n_s,b,\Sigma_s)$ isometrically.
\end{theorem}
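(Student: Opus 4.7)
The plan is to deduce Theorem \ref{rig:state} directly from the positive mass theorem announced as Theorem \ref{main:pmt}. Although its full statement has not yet appeared in the excerpt, the abstract and the preceding discussion make clear that this result produces, under the decay assumption (\ref{asympthyp}) with $\sigma>n/2$ and the dominant energy conditions $R_g\geq -n(n-1)$ and $H_g\geq (n-1)\lambda_s$, a mass functional on $\mathcal N_{b,s}=[V_{(0)},V_{(2)},\ldots,V_{(n)}]$ which is non-negative on a distinguished cone and whose vanishing forces an isometry with the model $(\mathbb H^n_s,b,\Sigma_s)$. So the problem reduces to showing that, under the faster decay $\sigma>n$, the mass integrals are forced to vanish; the rigidity clause then finishes the proof.

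First I would apply Theorem \ref{main:pmt} to $(M,g,\Sigma)$. The hypotheses $R_g\geq -n(n-1)$ and $H_g\geq (n-1)\lambda_s$ are precisely the interior and boundary dominant energy conditions arising from Proposition \ref{secformSigma}, so the theorem applies and yields a mass linear functional $\mathcal H\colon\mathcal N_{b,s}\to\mathbb R$ taking non-negative values on the relevant cone of static potentials.

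Next I would estimate the mass integrals under the stronger decay. Writing $e=F^*g-b$, the functional $\mathcal H(V)$ is a limit as $r_0\to\infty$ of boundary integrals of the form
\[
\int_{\{r=r_0\}\cap \mathbb H^n_s} \mathbb U(V,e)\, d\sigma_b \;+\; \int_{\{r=r_0\}\cap \Sigma_s} \mathbb V(V,e)\, d\ell_b,
\]
where $\mathbb U$ and $\mathbb V$ are universal expressions algebraic in $e$ and its first $b$-derivatives, weighted by $V\in\mathcal N_{b,s}$. Each static potential satisfies $V=O(r)$, while (\ref{asympthyp}) with $\sigma>n$ gives $|\mathbb U(V,e)|_b + |\mathbb V(V,e)|_b = O(r^{1-\sigma})$. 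Since the area of the coordinate sphere $\{r=r_0\}$ inside $\mathbb H^n_s$ grows like $r_0^{n-1}$ and its boundary along $\Sigma_s$ like $r_0^{n-2}$, both integrals are of order $r_0^{n-\sigma}\to 0$. Hence $\mathcal H\equiv 0$ on $\mathcal N_{b,s}$.

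Finally, the rigidity clause of Theorem \ref{main:pmt} then forces $(M,g,\Sigma)$ to be isometric to $(\mathbb H^n_s,b,\Sigma_s)$. The one genuinely delicate step is verifying the decay estimates for the specific expressions $\mathbb U$ and $\mathbb V$ that enter the mass, once those are pinned down in the next section; in particular one needs each term, including those coming from derivatives of $V$ (which are themselves $O(1)$ after normalization by $|\nabla_b V|$), to be controlled by $V\cdot|\partial e|_b + |e|_b = O(r^{1-\sigma})$, so that multiplication by the boundary volume element yields a vanishing limit exactly when $\sigma>n$. This is the only place where the precise form of the mass developed in the next section is used, and it is the main obstacle to a completely self-contained proof at this stage.
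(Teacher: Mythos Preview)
Your proposal is correct and follows essentially the same route as the paper: observe that every $V\in\mathcal N_{b,s}$ satisfies $V=O(r)$, so under the decay (\ref{asympthyp}) with $\sigma>n$ the mass integrals vanish, and then invoke the rigidity clause of Theorem \ref{main:pmt}. The paper's proof is in fact terser than yours, simply asserting that the vanishing of the mass vector $P_s(F)$ is ``immediate'' from $V=O(r)$ and $\sigma>n$; your explicit counting of powers (integrand $O(r^{1-\sigma})$ against hypersurface volume $O(r^{n-1})$) is the computation behind that word. One small correction: $|\nabla_b V|$ is $O(r)$, not $O(1)$, but since it multiplies $e=O(r^{-\sigma})$ rather than $\partial e$, the resulting term is still $O(r^{1-\sigma})$ and your conclusion is unaffected.
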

\begin{proof}
			Since any $V\in\mathcal N_{b,s}$ satisfies $V=O(r)$ as $r\to+\infty$, it is immediate that the decay assumption (\ref{asympthyp}) with $\sigma>n$ implies that the mass vector $P_s(F)$ in (\ref{mass:vector}) vanishes. The result then follows from Theorem \ref{main:pmt} and Remark \ref{rig:appl}.
		\end{proof}
		
Clearly, this result means that the static manifold with boundary $(\mathbb H^n_s,b,\Sigma_s)$ is rigid in the sense discussed previously. 
		
\begin{corollary}\label{souam:cor}\cite[Theorem 2]{souam2021mean}
	The embedding 
		$\Sigma_s\hookrightarrow\mathbb H^n$ can not be compactly  deformed (as a hypersurface of $\mathbb H^n$) while  keeping its mean curvature at least $(n-1)\lambda_s$ everywhere. 	
\end{corollary}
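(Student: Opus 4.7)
The plan is to derive Corollary \ref{souam:cor} as a direct consequence of Theorem \ref{rig:state}, exploiting the fact that a compactly supported deformation of $\Sigma_s$ produces a manifold that agrees with the model outside a compact set and thus trivially satisfies the decay assumption with $\sigma$ as large as we wish.

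I would argue by contradiction. Suppose there exists a compactly supported deformation of $\Sigma_s$ to a hypersurface $\Sigma'\subset\mathbb H^n$ with mean curvature at least $(n-1)\lambda_s$ everywhere and with $\Sigma'\neq \Sigma_s$. Choose a compact set $K\subset \mathbb H^n$ outside of which $\Sigma'=\Sigma_s$, and let $M'$ be the closed region of $\mathbb H^n$ bounded by $\Sigma'$ on the $\mathbb H^n_s$-side, equipped with $g':=b|_{M'}$. Outside $K$, the inclusion $M'\setminus K\hookrightarrow \mathbb H^n_s$ is the identity and supplies a chart at infinity for which $F^*g'\equiv b$; consequently (\ref{asympthyp}) holds with any $\sigma>n$, and the integrability hypotheses on $r(R_{g'}+n(n-1))$ and $r(H_{g'}-(n-1)\lambda_s)$ are trivial. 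Indeed $R_{g'}\equiv -n(n-1)$ on all of $M'$ since $(M',g')$ is locally hyperbolic, while the mean curvature bound $H_{g'}\geq (n-1)\lambda_s$ is exactly the standing hypothesis. Finally, $M'$ is spin, being an open subset of the parallelizable manifold $\mathbb H^n$.

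Theorem \ref{rig:state} therefore produces an isometry $\Phi:(M',g',\Sigma')\to (\mathbb H^n_s,b,\Sigma_s)$. The last, and most delicate, step is to upgrade this abstract isometry to the set-theoretic equality $\Sigma'=\Sigma_s$ inside $\mathbb H^n$. Since the chart at infinity can be taken to be the identity and the proof of Theorem \ref{rig:state} will match the abstract isometry with the given chart on the end, $\Phi$ agrees with the identity on $M'\setminus K$. Unique continuation of isometries of the complete hyperbolic space $\mathbb H^n$—an isometry is determined by its $1$-jet at a single point—then implies $\Phi=\mathrm{id}$ globally, forcing $M'=\mathbb H^n_s$ and $\Sigma'=\Sigma_s$, contradicting the assumption that the deformation was nontrivial.
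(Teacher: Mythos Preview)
Your proposal is correct and follows exactly the route the paper intends: the corollary is stated immediately after Theorem \ref{rig:state} with no separate proof, precisely because a compactly supported deformation yields an $s$-AH manifold that coincides with the model outside a compact set, so Theorem \ref{rig:state} applies directly. Your write-up simply makes this explicit.

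One small comment on your last step. The claim that ``the proof of Theorem \ref{rig:state} will match the abstract isometry with the given chart on the end'' is not something you can read off from the \emph{statement} of Theorem \ref{rig:state}; it requires going into the proof. A cleaner way to finish, using only the statement, is: the isometry $\Phi:(M',b)\to(\mathbb H^n_s,b)$ between hyperbolic domains extends (by analytic continuation of local isometries of a space form) to a global isometry $A$ of $\mathbb H^n$; since $\Sigma'$ agrees with $\Sigma_s$ outside $K$, $A$ carries an open piece of the totally umbilical hypersurface $\Sigma_s$ into $\Sigma_s$, hence $A(\Sigma_s)=\Sigma_s$ and $A(\mathbb H^n_s)=\mathbb H^n_s$; therefore $M'=A^{-1}(\mathbb H^n_s)=\mathbb H^n_s$ and $\Sigma'=\Sigma_s$. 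This avoids appealing to unstated compatibility with the chart.
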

		
	\begin{remark}\label{ext:best}
	Theorem \ref{rig:state} actually implies a sharper version of Corollary \ref{souam:cor} in the sense that the deformation does {\em not} have to be compactly supported. This follows from the fact that,  in the specific case of deformations of the embedding $\Sigma_s\hookrightarrow\mathbb H^n$, the assumption $\sigma>n$  may be rephrased in terms of suitable decay rates for the fundamental forms of the deformation viewed as a graph over $\Sigma_s$ in a neighborhood of infinity.
	\end{remark}

		\begin{remark}\label{mari}
		It is well-known that the condition $R_g\geq -n(n-1)$ may be interpreted as a dominant energy condition  in the interior of $M$. Similarly, it turns out that the condition $H_g\geq (n-1)\lambda_s$ may also be viewed as a dominant energy condition along $\Sigma$ in the spirit of \cite{almaraz2021spacetime}, which treats the case $s=0$.
	\end{remark}	
		
		\begin{remark}\label{horo}
			As stated above, Theorem \ref{rig:state} and Corollary \ref{souam:cor} do not contemplate the case of a non-compact (static) domain in $\mathbb H^n$ whose boundary is a horosphere, a situation that, at least for compactly supported deformations, has also been considered in \cite{souam2021mean}. It turns out that this case may be approached by means of a somewhat involved variation of the methods leading to Theorem \ref{rig:state}. More precisely, it is possible to formulate and prove a positive mass theorem for asymptotically hyperbolic spin manifolds modeled at infinity on such static domains. As expected, its  rigidity statement retrieves a sharper version (in the sense of Remark \ref{ext:best} above) of the corresponding result in \cite[Theorem 2]{souam2021mean}; {see Section \ref{horo:case} for details.}
			\end{remark}
		
		This paper is organized as follows. In Section \ref{geomass}, we introduce the concept of mass for $s$-AH manifolds and establish its geometric invariance. The corresponding positive mass theorem, including its rigidity statement, is proved in Section \ref{proof:main} under the spin assumption. This uses in a crucial way the $\theta$-boundary conditions on spinors, whose properties are discussed in Section \ref{spinorsbd}. Finally, in Section \ref{horo:case} we indicate how our main theorem may be extended to the horospherical case.

		
		\section{The geometric invariance of the mass functional}\label{geomass}
		
		Here we define a mass-type invariant for an $s$-AH manifold $(M,g,\Sigma)$
		as in Definition \ref{def:as:hyp} and study its invariance properties under the group of isometries of the model space. 
		The argument here is quite similar to that appearing in \cite[Section 3]{almaraz2020mass}, so we omit some details.  
		
		Since ultimately the mass will  depend only on the asymptotic geometry of the manifold, we may appeal to the identification provided by the chart $F$ to work in  $\mathbb H^n_{s,r_0}$. We set, for $r_0<r_1<r_2$,
		$$
		A_{r_1,r_2}=\{x\in \mathbb H^n_{s,r_0};\:r_1\leq |x'|_\delta \leq r_2\},\:\:\:\Sigma_{r_1,r_2}=\{x\in
		\mathbb H^n_{s,r_0}\cap \Sigma_s;\:r_1\leq |x'|_\delta \leq r_2\},
		$$ 
		and
		$S^{n-1}_{r,+}=\{x\in \mathbb H^n_{s,r_0};\:|x'|_\delta=r\}$,
		so that
		\[
		\d A_{r_1,r_2}=S^{n-1}_{r_1,+}\cup \Sigma_{r_1,r_2}\cup S^{n-1}_{r_2,+}.
		\]
		In other words, $S^{n-1}_{r,+}$ is the portion of the geodesic sphere centered at the `'origin'' $(1,0,\cdots,0)$ in $\mathbb R^{1,n}$ and with radius $\sinh^{-1}r$ lying inside $\mathbb H^n_{s,r_0}$.
		We represent by $\mu$ the outward unit normal vector field to $S^{n-1}_{r_1,+}$ or
		$S^{n-1}_{r_2,+}$, with respect to the metric $b$. Also, we set $S^{n-2}_r=\partial
		S_{r,+}^{n-1}\hookrightarrow \Sigma_s$, oriented by its outward unit conormal field $\vartheta$, again 
		with respect to $b$.
		Finally, we set ${e}=g-b$ and
		define the $1$-form
		\begin{equation}\label{charge}
			\mathbb U(V,e)=V({\rm div}_be-d{\rm tr}_be)-{\nabla_bV}\righthalfcup e+{\rm
				tr}_be\, dV.
		\end{equation}
		
		\begin{theorem}\label{finitemass}
			If $(M,g,\Sigma)$ is an $s$-AH manifold then the quantity
			\begin{equation}\label{massdef}
				\mathfrak m_{s,F}(V)=\lim_{r\to
					+\infty}\left[\int_{S^{n-1}_{r,+}}\langle\mathbb U(V,e),\mu\rangle
				dS^{n-1}_{r,+}-
				\int_{S^{n-2}_{r}}Ve(\eta_s,\vartheta)dS^{n-2}_{r}\right], \quad V\in\mathcal N_{b,s},
			\end{equation}
			exists and is finite.	
		\end{theorem}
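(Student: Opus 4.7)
The plan is to apply Stokes' theorem to the annular region $A_{r_1, r_2}$ and exhibit $\mathfrak{m}_{s,F}(V)$ as the limit of a Cauchy expression, by showing that both the resulting bulk and boundary contributions are integrable under the decay (\ref{asympthyp}) together with the $L^1$ hypotheses on $R_g+n(n-1)$ and $H_g-(n-1)\lambda_s$.

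First I would establish the pointwise divergence identity
\[
{\rm div}_b\,\mathbb{U}(V,e) = V(R_g-R_b) + V\cdot\mathcal{Q}(e,\nabla_b e),
\]
where $\mathcal{Q}$ is quadratic in $e$ and its first derivatives. This is obtained by multiplying the classical linearization formula for the scalar curvature by $V$ and integrating by parts once, the algebraic terms cancelling thanks to the bulk static equations $\nabla_b^2 V = V\,b$ and $\Delta_b V = nV$ satisfied by every $V\in\mathcal{N}_{b,s}$, combined with ${\rm Ric}_b=-(n-1)b$. Since $R_b=-n(n-1)$, the linear term reads $V(R_g+n(n-1))$, which lies in $L^1(M)$ by the hypothesis $r(R_g+n(n-1))\in L^1(M)$ and the fact that $V=O(r)$. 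In the $(r,\varphi)$ coordinates the bulk volume element behaves as $r^{n-2}dr\,d{\rm vol}_{S^{n-1}}$, while the decay assumption gives $|e|_b,\,|\nabla_b e|_b=O(r^{-\sigma})$. The quadratic contribution thus scales as $\int r^{n-1-2\sigma}\,dr$, which converges at infinity precisely because $\sigma>n/2$.

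Next I would handle the boundary portion $\Sigma_{r_1,r_2}$. Stokes' theorem generates an integral of $\langle\mathbb{U}(V,e),\eta_s\rangle$ there, which I would rearrange using the boundary static identity $\partial V/\partial\eta_s=\lambda_s V$ and the umbilicity $\Pi_s=\lambda_s\gamma_s$ from Proposition \ref{secformSigma}, together with the standard expansion of $H_g-(n-1)\lambda_s$ in terms of $e$ and its tangential derivatives. The outcome takes the schematic form
\[
\int_{\Sigma_{r_1,r_2}}V\bigl(H_g-(n-1)\lambda_s\bigr)\,d\sigma_b + \int_{\Sigma_{r_1,r_2}}V\,\widetilde{\mathcal{Q}}(e,\nabla_b e)\,d\sigma_b + \Bigl[\int_{S^{n-2}_{r_2}}-\int_{S^{n-2}_{r_1}}\Bigr]V\,e(\eta_s,\vartheta)\,dS^{n-2}_r,
\]
where $\widetilde{\mathcal{Q}}$ is again quadratic. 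The last bracketed piece is precisely what the correction term in (\ref{massdef}) is designed to absorb, so it matches the correction contributions at radii $r_1$ and $r_2$. The first summand is in $L^1$ by the decay hypothesis on $H_g-(n-1)\lambda_s$, and since the induced area form on $\Sigma_s$ scales like $r^{n-3}dr$ at infinity, the quadratic summand is again integrable for $\sigma>n/2$. Putting everything together, the difference of the bracketed quantities in (\ref{massdef}) at radii $r_1$ and $r_2$ tends to zero as $r_1,r_2\to\infty$, establishing both existence and finiteness of the limit.

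The main technical obstacle lies in the reorganization sketched in the third paragraph: one must verify that the tangential total divergences on $\Sigma_s$ produced by integrating $\mathbb{U}(V,e)$ by parts along the boundary precisely generate the correction integrand $V\,e(\eta_s,\vartheta)$ on $\partial\Sigma_{r_1,r_2}=S^{n-2}_{r_1}\sqcup S^{n-2}_{r_2}$. The alignment of the $\nabla_b V$-terms with the normal component of $e$ depends crucially on the umbilicity relation $\Pi_s=\lambda_s\gamma_s$ and on the Neumann-type boundary condition for $V$ in (\ref{eq:static:bd:cond}); once this is checked, the argument parallels \cite[Section 3]{almaraz2020mass} and can be adapted with only minor modifications to the present setting.
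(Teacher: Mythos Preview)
Your proposal is correct and follows essentially the same route as the paper's own proof: both start from the divergence identity for $\mathbb U(V,e)$ coming from the scalar-curvature expansion, integrate over $A_{r_1,r_2}$, use the boundary static conditions (\ref{Vbd}) together with the first variation of the mean curvature to reorganize the $\Sigma_{r_1,r_2}$-contribution into $V(H_g-(n-1)\lambda_s)$ plus a tangential divergence yielding the $S^{n-2}_r$ correction, and conclude via the $L^1$ hypotheses and $\sigma>n/2$. Your identification of the delicate step---checking that the tangential divergence on $\Sigma_s$ produces exactly $V\,e(\eta_s,\vartheta)$ on $S^{n-2}_r$---is precisely where the paper invokes the umbilicity $\Pi_s=\lambda_s\gamma_s$ and the Neumann condition on $V$.
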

	
\begin{proof}
		As in the proof of \cite[Theorem 3.1]{almaraz2020mass}, the argument is based on the expansions of both the scalar curvature and the mean curvature around the
			background metric $b$. To begin with, a well-known computation gives,
for any $V\in \mathcal N_{b,s}$,  
			\[
				V(R_g+n(n-1))={\rm div}_b\mathbb U(V,e)+Q_b(e,V), 
			\]
			where $Q_b(e,V)$  is linear in $V$ and at least quadratic in $e$.
			We now perform the integration of this identity over the half-annular region
			$A_{r,r'}\subset \mathbb H^n_{s,r_0}$ and explore the imposed boundary conditions on the underlying static domain, namely, 
			\begin{equation}\label{Vbd}
				\frac{\partial V}{\partial\eta_s}=\lambda_s V, \quad \Pi_s=\lambda_s \gamma_s;
			\end{equation}
		compare with Proposition \ref{secformSigma}.
Using the well-known first variation formula for the mean curvature of the boundary  
		 we obtain
			\begin{eqnarray*}
					\mathcal F_{r_1,r_2}(g)& := & \int_{A_{r_1,r_2}}V(R_g+n(n-1))dM_b+2\int_{\Sigma_{r_1,r_2}}V(H_g-(n-1)\lambda_s)d\Sigma_{\gamma_s}\\
				& \approx  & 	\int_{S^{n-1}_{r_2,+}}\langle\mathbb U(V,e),\mu\rangle dS^{n-1}_{r_2,+}-
				\int_{S^{n-1}_{r_1,+}}\langle\mathbb U(V,e),\mu\rangle dS^{n-1}_{r_1,+}\\
				& & -\int_{\Sigma_{r_1,r_2}}({\nabla_bV}\righthalfcup
				e)(\eta_s)d\Sigma_{\gamma_s}+\int_{\Sigma_{r,r'}}{\rm tr}_be\, dV(\eta_s)d\Sigma_{\gamma_s}\\
				& & -\int_{\Sigma_{r_1,r_2}}V{\rm div}_{\gamma_s}Xd\Sigma_{\gamma_s}
				-\int_{\Sigma_{r_1,r_2}}V\langle\Pi_b,e\rangle_\gamma d\Sigma_{\gamma_s}
			\end{eqnarray*}
		where 	$X$ is the vector field dual to the $1$-form $(\eta_s,\cdot)|_{T\Sigma}$  the symbol $\approx$ means that we are discarding certain integrals over $\Sigma_{r_1,r_2}$ or $A_{r_1,r_2}$ which vanish as $r_1\to+\infty$ due to the assumption $\sigma>n/2$. Also, we are omitting the restriction symbol on 
		$e|_\Sigma$.  
			Now observe that (\ref{Vbd}) leads to 
			\[
			{\rm tr}_be\, dV(\eta_s)=V\langle\Pi_b,e\rangle_{\gamma_s},
			\]
			so that, after a little manipulation,
			we end up with 
			\begin{eqnarray*}
				\mathcal F_{r_1,r_2}(g)
				& \approx  & 	\int_{S^{n-1}_{r_2,+}}\langle\mathbb U(V,e),\mu\rangle dS^{n-1}_{r_2,+}-
				\int_{S^{n-1}_{r_1,+}}\langle\mathbb U(V,e),\mu\rangle dS^{n-1}_{r_1,+}\\
				& & -\int_{\Sigma_{r_1,r_2}}{\rm div}_{\gamma_s}(VX)d\Sigma_{\gamma_s}\\
				& =  & 	\int_{S^{n-1}_{r_2,+}}\langle\mathbb U(V,e),\mu\rangle dS^{n-1}_{r_2,+}-
				\int_{S^{n-2}_{r_2}}Ve(\eta_s,\vartheta) dS^{n-2}_{r_2}\\
				& & -\left(\int_{S^{n-1}_{r,+}}\langle\mathbb U(V,e),\mu\rangle dS^{n-1}_{r,+}-
				\int_{S^{n-2}_{r_1}}Ve(\eta_s,\vartheta) dS^{n-2}_{r_1}\right).
			\end{eqnarray*}
			 Bearing in mind that $V=O(r)$, the integrability assumptions on $r(R_g+n(n-1))$
			and $r(H_g-(n-1)\lambda_s)$ in Definition \ref{def:as:hyp} clearly imply that $\mathcal F_{r_1,r_2}(g)\to 0$ as $r_1\to+\infty$, which completes the proof.  
		\end{proof}
		
		We must think of $\mathfrak m_{s,F}$ as a 
		linear functional on the space $	\mathcal N_{b,s}$
		of static potentials satisfying the given boundary conditions.
		We must be aware, however, that the decomposition $g=b+e$ used above depends on the choice
		of a chart at infinity (namely, the diffeomorphism $F$), so  we
		need to check that $\mathfrak m_{s,F}$ behaves as expected when we pass from one such
		chart to another. For this we need a couple of results whose proofs follow from well-known principles \cite{chrusciel2003mass,almaraz2020mass}.  
		
		\begin{lemma}\label{exact}
			If $V\in\mathcal N_{b,s}$ and $X$ is a vector field then 
			\begin{equation}\label{exact1}
				\mathbb U(V,\mathcal L_Xb)={\rm div}_b\mathbb V(V,X,b),
			\end{equation}
			where the $2$-form is explicitly given by 
			\begin{equation}\label{exact2}
				\mathbb V_{ik}=V(X_{i;k}-X_{k;i})+2(X_kV_i-X_iV_k).
			\end{equation}
		Here, the semicolon denotes covariant differentiation with respect to $b$.
		\end{lemma}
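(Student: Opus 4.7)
The plan is to verify the identity by direct component computation in index notation. Since both sides are local tensor expressions making no reference to the boundary conditions of the problem, the only ingredients needed are the interior static equation $V_{;ij}=Vb_{ij}$ (valid for every $V\in\mathcal N_{b,s}$, as derived in the proof of Proposition \ref{secformSigma}) together with the curvature identities $R_{ij}=-(n-1)b_{ij}$ and $R_{ijkl}=-(b_{ik}b_{jl}-b_{il}b_{jk})$ of the hyperbolic background.

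First I would write $h:=\mathcal L_Xb$ in components as $h_{ij}=X_{i;j}+X_{j;i}$, so $\operatorname{tr}_b h = 2\operatorname{div}_bX$. The $i$-th component of the left-hand side then reads
\[
\mathbb U(V,h)_i = V\bigl(h_{ij}{}^{;j}-2(\operatorname{div}_bX)_{;i}\bigr) - V^{;j}h_{ij} + 2(\operatorname{div}_bX)\,V_{;i}.
\]
Using the Ricci identity $\nabla^j\nabla_iX_j = \nabla_i\operatorname{div}_bX + R_i{}^kX_k$ on the hyperbolic background, the term $h_{ij}{}^{;j}$ rearranges as $\Delta_bX_i + (\operatorname{div}_bX)_{;i} - (n-1)X_i$, yielding a clean expression for $\mathbb U(V,h)_i$ involving $V\Delta_bX_i$, a single $V(\operatorname{div}_bX)_{;i}$ with the opposite sign (the two copies from the first bracket partially cancel), the curvature contribution $-(n-1)VX_i$, and the two $V$-derivative terms.

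Next I would expand the right-hand side by distributing the covariant derivative across the two summands of $\mathbb V_{ik}$. This produces $V\Delta_bX_i - VX_{k;i}{}^{;k}$ plus derivative-hits-$V$ contributions from the first summand, and $2\bigl(X_k{}^{;k}V_{;i}+X_kV_{;i}{}^{;k}-X_i{}^{;k}V_{;k}-X_i\Delta_bV\bigr)$ from the second. A second application of the Ricci identity converts $X_{k;i}{}^{;k}$ into $(\operatorname{div}_bX)_{;i}-(n-1)X_i$, while the static equation forces $V_{;i}{}^{;k}=V\delta_i{}^k$ and $\Delta_bV=nV$. Substituting these identities collapses every second derivative of $V$ into an algebraic term, and a term-by-term comparison shows that the $\Delta_bX_i$ terms cancel, the antisymmetric combination $X_i{}^{;k}V_{;k}-X_kV_{;i}{}^{;k}$ reproduces $-V^{;j}h_{ij}$ together with the surviving $V_{;i}\operatorname{div}_bX$ piece, and the residual algebraic pieces arising from $-(n-1)VX_i$, $VX_i$ (from $V_{;i}{}^{;k}X_k$) and $-2nVX_i+2VX_i$ (from $\Delta_bV$ and its symmetric partner) add up to exactly what $\mathbb U(V,h)_i$ prescribes.

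The main obstacle is purely bookkeeping: the Ricci identity must be invoked consistently on each side, and the static equation must be applied repeatedly to collapse Hessians of $V$ into scalar multiples of $V$ and of $b$. With a single set of sign conventions for $R_{ij}$ and for $\mathcal L_Xb$, no further geometric input is required; in particular, no hypothesis about the boundary $\Sigma_s$ enters, which is consistent with the fact that $\mathbb U(V,\mathcal L_Xb)=\operatorname{div}_b\mathbb V$ is a purely local identity on the whole model $\mathbb H^n$ and holds for every value of $s$.
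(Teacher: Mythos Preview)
Your approach is correct and is precisely the standard verification. The paper does not actually prove this lemma: it only states that the proof ``follows from well-known principles'' and cites \cite{chrusciel2003mass,almaraz2020mass}, where exactly the kind of direct index computation you outline (Ricci commutation on $X$ together with the static Hessian identity $\nabla_b^2V=Vb$ and $\Delta_bV=nV$) is carried out.

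One small caveat: the bookkeeping in your last paragraph is slightly garbled. The $V\Delta_bX_i$ terms \emph{match} on the two sides rather than cancel, and the tally of the $VX_i$ contributions on the right-hand side should read
\[
(n-1)VX_i \;+\; 2VX_i \;-\; 2nVX_i \;=\; -(n-1)VX_i,
\]
coming respectively from the Ricci-commuted $-VX_{k;i}{}^{;k}$, from $2X_kV_{;i}{}^{;k}=2VX_i$, and from $-2X_i\Delta_bV=-2nVX_i$; this matches the single $-(n-1)VX_i$ on the left. With that minor correction your outline goes through verbatim.
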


		\begin{lemma}\label{rigid}
			If $F:\mathbb H^n_s\to \mathbb H^n_s$ is a diffeomorphism such that
			$F^*b=b+O(r^{-\sigma})$ then there exists an isometry $A$ of $\mathbb H^n_s$ such that
			\[
			F=A+O(r^{-\sigma}).
			\] 
		\end{lemma}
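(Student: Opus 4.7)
The plan is to follow the strategy used in \cite{chrusciel2003mass} for $\mathbb{H}^n$ and in \cite{almaraz2020mass} for $\mathbb{H}^n_0$, adapted to the present umbilical setting. The argument has two main steps: first identifying the candidate isometry $A$ from the action of $F$ at conformal infinity, and then improving the resulting estimate to $F = A + O(r^{-\sigma})$ by an ODE analysis along radial geodesics.

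For the first step, the hypothesis $F^*b = b + O(r^{-\sigma})$ implies that $F$ preserves the asymptotic geodesic structure of $(\mathbb{H}^n_s, b)$ and extends continuously to a map of the conformal boundary at infinity $\partial_\infty \mathbb{H}^n_s$. This boundary map is conformal with respect to the standard round conformal class on $\mathbb{S}^{n-1}$, and since $F$ preserves $\Sigma_s$ it also preserves the subsphere $\partial_\infty \Sigma_s \subset \partial_\infty \mathbb{H}^n$. By Liouville's theorem, such a conformal map is the boundary value of a unique M\"obius isometry of $\mathbb{H}^n$; the additional preservation of $\partial_\infty \Sigma_s$, combined with the fact that $\Sigma_s$ is uniquely determined among equidistant hypersurfaces to $\Sigma_0$ by its conformal boundary, forces this isometry $A$ to preserve $\mathbb{H}^n_s$.

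For the second step, replace $F$ by $G = A^{-1} \circ F$, so that $G^*b - b = O(r^{-\sigma})$ while the boundary-at-infinity action of $G$ is the identity. Writing $G = \mathrm{Id} + Y$ in the asymptotic chart, with $Y$ tangent to $\Sigma_s$ along $\Sigma_s$, the pullback expansion gives $\mathcal{L}_Y b = O(r^{-\sigma})$ modulo terms quadratic in $Y$ and $\nabla Y$. Decomposing $Y$ in polar coordinates $(r, \varphi)$ on $\mathbb{H}^n_s$ and using the triviality of the boundary-at-infinity action of $G$, the Killing-type system reduces to an ODE along radial geodesics with vanishing boundary values at infinity, whose solutions satisfy $|Y|_b = O(r^{-\sigma})$. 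Translated back to $F$, this is the desired estimate.

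The main obstacle is this last ODE step: one must verify that no slowly decaying asymptotic Killing field of $\mathbb{H}^n_s$ obstructs the decay of $Y$, i.e., that the finite-dimensional Killing algebra of $\mathbb{H}^n_s$ has already been fully absorbed into the choice of $A$. This amounts to the statement that the conformal action on $\partial_\infty \mathbb{H}^n_s$ determines an isometry of $\mathbb{H}^n_s$ uniquely, which is precisely what was exploited in step one; compatibility with the boundary $\Sigma_s$ throughout is guaranteed by the tangency of $Y$ along $\Sigma_s$ and by the fact that polar coordinates are adapted to the umbilical structure of $\Sigma_s$.
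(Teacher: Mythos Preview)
The paper does not supply its own proof of this lemma: it simply states that the result ``follows from well-known principles'' in \cite{chrusciel2003mass,almaraz2020mass}. Your proposal is precisely an outline of that standard argument, so in substance you are doing exactly what the paper intends.

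One point in your justification is imprecise and worth correcting. You write that ``$\Sigma_s$ is uniquely determined among equidistant hypersurfaces to $\Sigma_0$ by its conformal boundary.'' This is false: in the hyperboloid model $\Sigma_t=\{x_1=t\}$, and for every finite $t$ one has $x_1/x_0\to 0$ at infinity, so all the $\Sigma_t$ share the \emph{same} ideal boundary, namely the equator $\partial_\infty\Sigma_0\subset\mathbb S^{n-1}_\infty$. What actually forces the M\"obius extension $A$ to preserve $\mathbb H^n_s$ is the following: the boundary action of $F$ preserves the closed hemisphere $\partial_\infty\mathbb H^n_s$, hence $A$ preserves both the equator and the choice of side; an isometry of $\mathbb H^n$ fixing $\Sigma_0$ setwise and preserving the side automatically fixes every equidistant leaf $\Sigma_t$, in particular $\Sigma_s$. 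With this correction, your two-step scheme (identify $A$ at conformal infinity, then run the radial ODE estimate on $G=A^{-1}\circ F$) is the standard one and goes through.
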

		
		\begin{remark}\label{iso:desc}
			In the hyperboloid model $\mathbb H^n\hookrightarrow\mathbb R^{1,n}$, the isometry group of hyperbolic $n$-space gets identified to $O^\uparrow(1,n)$, the subgroup of linear isometries of $(\mathbb R^{1,n},\langle\,,\rangle_{1,n})$ preserving time orientation.  It is clear that any $A\in O^\uparrow(1,n)$ preserving $\Sigma_s$, and hence defining an isometry of $\mathbb H^n_s$, also preserves $\Sigma_0$, which is an isometric copy of hyperbolic $(n-1)$-space. Thus, the group of isometries of $\mathbb H^n_s$, which appears in Lemma \ref{rigid} above, may be identified to $O^\uparrow(1,n-1)$. We thus obtain a natural representation $\rho^s$ of $O^\uparrow(1,n-1)$ on $\mathcal N_{b,s}$ by setting $\rho^s_A(V)=V\circ A^{-1}$, which is easily shown to be {irreducible}. 
			\end{remark}

		Suppose now that we have two diffeomorphisms, say $F_{1},F_2:\mathbb
		H^n_{s,r_0}\to M_{\rm ext}$, defining charts at infinity as above and consider
		$F=F^{-1}_1\circ F_2:\mathbb
		H^n_{s,r_0}\to \mathbb
		H^n_{s,r_0}$. It is clear that
		$F^*b=b+O(r^{-\sigma})$, $\sigma>n/2$, so by Lemma \ref{rigid},
		$F=A+O(r^{-\sigma})$ for some isometry $A$. The next result establishes the
		geometric invariance of the mass-type invariant appearing in Theorem
		\ref{finitemass}.
		
		\begin{theorem}\label{geoinv}
			Under the conditions above, there holds
			\[
			\mathfrak m_{s,F_1}(V)=\mathfrak m_{s,F_2}(\rho^s_A(V)), \quad V\in\mathcal N_{b,s}.
			\]
		\end{theorem}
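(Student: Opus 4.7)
The plan is to decompose $F := F_1^{-1}\circ F_2$ into an exact $b$-isometry $A$ and a small gauge perturbation, then to show that the $A$-part realizes the action $\rho^s_A$ on the mass while the perturbation is invisible to it. Using Lemma \ref{rigid}, I would write $F = A\circ\psi$ with $\psi := A^{-1}\circ F = \mathrm{id} + O(r^{-\sigma})$, generated by a displacement vector field $Y$ of order $O(r^{-\sigma})$. Since $A^*b = b$, setting $e_i := F_i^*g - b$ yields
\[
e_2 = \psi^*\bigl(A^*(b+e_1)\bigr) - b = A^*e_1 + \mathcal L_Y b + O(r^{-2\sigma}).
\]
As the integrand in (\ref{massdef}) is linear in $e$, this gives
\[
\mathfrak m_{s,F_2}(\rho^s_A V) = \mathfrak m_s(A^*e_1,\, V\circ A^{-1}) + \mathfrak m_s(\mathcal L_Y b,\, V\circ A^{-1}) + \mathrm{err},
\]
with $\mathrm{err}\lesssim \int_{S^{n-1}_{r,+}} r\cdot O(r^{-2\sigma})\,dS = O(r^{n-2\sigma}) \to 0$ thanks to $\sigma > n/2$ and $V = O(r)$.

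For the isometric piece I would use the naturality of ${\rm div}_b$, $d$, and the remaining operators entering $\mathbb U$ under $b$-isometries, which yields the pointwise identity $A^*\mathbb U(V,e_1) = \mathbb U(V\circ A, A^*e_1)$ together with an analogous identity for the corner integrand $V e(\eta_s,\vartheta)$ (since $A$ preserves $\Sigma_s$ and hence $\eta_s$ and the conormal $\vartheta$). The change of variables $y = A(x)$ then transforms $\mathfrak m_s(A^*e_1, V\circ A^{-1})$ into the same mass-type limit for $(V, e_1)$, but performed along the shifted exhaustion $\{A(S^{n-1}_{r,+})\}$ of $\mathbb H^n_{s,r_0}$. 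To identify this limit with $\mathfrak m_{s,F_1}(V)$ I would reuse the Stokes computation of Theorem \ref{finitemass}, now applied to the thin region trapped between $S^{n-1}_{r,+}$ and $A(S^{n-1}_{r,+})$: on that region the bulk integrand $V(R_g + n(n-1))$ and the boundary integrand $V(H_g-(n-1)\lambda_s)$ are both integrable by Definition \ref{def:as:hyp} and contribute nothing in the limit, so the two exhaustions deliver the same value.

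The hardest step will be the vanishing of the gauge piece $\mathfrak m_s(\mathcal L_Y b,\, V\circ A^{-1})$. By Lemma \ref{exact},
\[
\mathbb U(V\circ A^{-1},\, \mathcal L_Y b) = \mathrm{div}_b \mathbb V,
\]
with $\mathbb V$ antisymmetric by (\ref{exact2}). Antisymmetry lets me recast $\langle\mathrm{div}_b\mathbb V,\mu\rangle\,dS^{n-1}_{r,+}$ as an exact top form on $S^{n-1}_{r,+}$ up to ambient-curvature corrections of the correct order (arising from the commutator $[\nabla^i,\nabla^j]\mathbb V_{ij}$ on the constant-curvature background), so Stokes converts the half-sphere integral into a corner integral on $S^{n-2}_r$. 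The delicate check is that this corner contribution matches, term by term, the subtractive piece $-\int_{S^{n-2}_r}(V\circ A^{-1})(\mathcal L_Y b)(\eta_s,\vartheta)\,dS^{n-2}_r$ of (\ref{massdef}). Plugging the explicit formula (\ref{exact2}) into the Stokes boundary term and using the staticity identities (\ref{Vbd}) --- most crucially that $\eta_s$ is proportional to $\nabla_b V_{(1)}$ --- should produce exactly this cancellation; this is precisely the role of the corner term built into (\ref{massdef}), namely to absorb the boundary contribution of infinitesimal diffeomorphisms. Combining the three steps yields $\mathfrak m_{s,F_1}(V) = \mathfrak m_{s,F_2}(\rho^s_A V)$, as claimed.
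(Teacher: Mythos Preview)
Your proposal is correct and follows the paper's strategy: Lemma \ref{rigid} for the isometry, Lemma \ref{exact} to reduce the gauge piece to a corner term on $S^{n-2}_r$, and the explicit cancellation of that corner term against $\int V(\mathcal L_\zeta b)(\eta_s,\vartheta)$ via the staticity conditions (\ref{Vbd}). The paper streamlines your argument by reducing at the outset to $A=\mathrm{id}$ (so your isometric-piece and exhaustion step disappear entirely) and reaches the corner by first transferring the half-sphere integral to $\Sigma_{(r)}$ via the bulk divergence theorem (using $\mathrm{div}_b\mathrm{div}_b\mathbb V=0$ on the Einstein background) and then applying Stokes on $\Sigma_{(r)}$, rather than doing Stokes directly on $S^{n-1}_{r,+}$ as you propose; the final corner computation is the same.
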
 
		
		\begin{proof}
			As in the proof of \cite[Theorem 3.4]{almaraz2020mass}, we may assume that $A$ is the identity, so that $F={\rm exp}\circ \zeta$,
			where $\zeta$ is a vector field on $\mathbb
			H^n_{s,r_0}$ which vanishes at infinity and is
			tangent to $\Sigma_s$ everywhere. Now set
			\[
			e_1=g_1-b,\quad g_1=F_1^*g,
			\quad
			e_2=F^*_2g-b=F^*g_1-b,
			\]
			so that 
			\[
			e:=e_2-e_1=\Lc_\zeta b+R_1,
			\]
			where $\Lc$ is Lie derivative and $R_1$ is a certain remainder. It follows that 
			\[
			\mathbb U(V,e)=\mathbb U(V,\Lc_\zeta b)+R_2,
			\]
			where $R_2$ is a remainder that vanishes, as $r\to +\infty$, after integration over $S^{n-1}_{r,+}$. Hence, 
			\begin{eqnarray*}
				\lim_{r\to+\infty}\int_{S^{n-1}_{r,+}}\langle\mathbb{U}(V,e),\mu\rangle
				dS^{n-1}_{r,+} 
				& = &\lim_{r\to+\infty}\int_{S^{n-1}_{r',+}}\langle\mathbb{U}(V,\Lc_\zeta
				b),\mu\rangle dS^{n-1}_{r,+}\\
				& = & 
				-\lim_{r\to+\infty}\int_{\Sigma_{(r)}}\langle\mathbb{U}(V,\Lc_\zeta b),\eta_s\rangle
				d\Sigma_s,
			\end{eqnarray*}
			where we used (\ref{exact1}) in the last step to transfer the integral to $\Sigma_{(r)}$, the compact domain of $\Sigma_s$ enclosed by $S^{n-2}_r$.
			We fix an adapted orthonormal frame $\{\mathfrak e_i\}_{i=1}^n$ so that $\mathfrak e_1=-\eta_s$ is the inward unit normal to $\Sigma_s$, so that $({\Pi_s})_{\alpha\beta}=\Gamma_{\alpha\beta}^1=-\Gamma^\alpha_{1\beta}$, $2\leq\alpha,\beta\leq n$. By  (\ref{exact1}) with $X=\zeta$,
			\[
			\langle\mathbb{U}(V,\Lc_\zeta b),\eta_s\rangle=b^{jk}\mathbb
			V_{ij;k}\eta_s^i={-}\mathbb V_{1k;k}={-}\mathbb V_{1\alpha;\alpha}=
			{\rm div}_\gamma (\eta_s\righthalfcup\mathbb V),
			\]
		and hence,
			\[
			\int_{\Sigma_r}\langle\mathbb{U}(V,\Lc_\zeta b),\eta_s\rangle
			d\Sigma_r=\int_{\Sigma_r}\mathbb
			V(\eta_s,\vartheta)d\Sigma_r={-}\int_{S^{n-2}_{r}}\mathbb V_{1\alpha}\vartheta^\alpha
			dS^{n-2}_r.
			\]
			We now observe that (\ref{Vbd}) may be used to check that $V_1=-\lambda_s V$ and 
			\[
			\zeta_{1;\alpha} = \zeta_{1,\alpha}+\Gamma_{\alpha i}^1\zeta_i={\Pi_s}_{\alpha\beta}\zeta_\beta=\lambda_s\zeta_\alpha,
			\]
			where we used that  $\zeta_1=0$ (because $\zeta$ is tangent to $\Sigma$). Similarly, $\zeta_{\alpha;1}=\zeta_{\alpha,1}{-}\lambda_s\zeta_{\alpha}$. Thus, 
			 	\[
			 \mathbb V_{1\alpha}=V(\zeta_{1;\alpha}-\zeta_{\alpha;1})+2(\zeta_\alpha
			 V_1-\zeta_1V_\alpha)={-V\zeta_{\alpha,1}},
			 \]
			 so that
			\begin{equation}\label{lasti}
			\lim_{r\to+\infty}	\int_{S^{n-1}_{r,+}}\langle\mathbb{U}(V,e),\mu\rangle
				dS^{n-1}_{r,+}=-\lim_{r\to+\infty}\int_{S^{n-2}_{r}}V\zeta_{\alpha,1}\vartheta^\alpha
				dS^{n-2}_r.
			\end{equation}  
			The argument is completed by noticing that 
			\[
			\left(-\int_{S^{n-2}_{r}}Ve_2(\eta_s,\vartheta)dS^{n-2}_{r}\right)-\left(-\int_{S^{n-2}_{r}}Ve_1(\eta_s,\vartheta)dS^{n-2}_{r}\right)=-\int_{S^{n-2}_{r}}
			Ve(\eta_s,\vartheta)dS^{n-2}_{r},
			\]
			and this equals
			\[
			-\int_{S^{n-2}_{r}}V(\Lc_\zeta
			b)(\eta_s,\vartheta)dS^{n-2}_{r}-\int_{S^{n-2}_{r}}VR_1(\eta_s,\vartheta)dS^{n-2}_{r},
			\]
			where the last integral vanishes at infinity. Finally, the remaining integral
			may be evaluated as 
			\begin{eqnarray*}
				-\int_{S^{n-2}_{r}}V(\Lc_\zeta b)(\eta_s,\vartheta)dS^{n-2}_{r}
				& = & \int_{S^{n-2}_{r}}V(\zeta_{\alpha;1}+\zeta_{1;\alpha})\vartheta^\alpha
				dS^{n-2}_{r}\\
				& = &
				\int_{S^{n-2}_{r}}V\zeta_{\alpha,1}\vartheta^\alpha
				dS^{n-2}_r,
			\end{eqnarray*}
			which  cancels out the right-hand side of (\ref{lasti}) as $r\to+\infty$.
		\end{proof}

		We now explore the consequences of Theorem \ref{geoinv}.  For this we  introduce a `'Lorentzian'' inner product $\langle\,,\rangle_s$ on $\mathcal N_{b,s}$
		by declaring that 	$\{V_{(0)}, V_{(2)},\cdots,V_{(n)}\}$ is an orthonormal basis  with $\langle V_{(0)},V_{(0)}\rangle_s=1$ and  $\langle V_{(j)},V_{(j)}\rangle_s=-1$, $j\geq 2$. Thus, we agree that $V\in \mathcal N_{b,s}$ is {\em future-directed} if $\langle V,V_{(0)}\rangle_s>0$. 
		The key point now is that the
		isometry group $O^\uparrow(1,n-1)$ of the background static space $(\mathbb
		H^n_{s},b,\Sigma_s)$ acts {\em isometrically} on $(\mathcal N_{b,s},\langle\,,\rangle_s)$ by means of the representation $\rho^s$ considered in Remark \ref{iso:desc}. {Since $\rho^s$ is irreducible,  all vectors in $\mathcal N_{b,s}$ should equally contribute to defining a single vector-valued mass invariant.
		Precisely, if for
		any  chart at infinity $F$ as above we set
		\begin{equation}\label{mass:vector}
		P_{s}(F)_a=\mathfrak m_{s,F}(V_{(a)}),\quad a=0,2,\cdots,n, 
		\end{equation}
		then Theorem \ref{geoinv}
		guarantees that the causal properties of $P_s(F)$ (e.g, whether it is space-like, isotropic or time-like, its past/future-directed nature in the two latter cases, its  Lorentzian length with respect to $\langle\,,\rangle_s$, etc.) are {\em chart
			independent} indeed. This
		suggests the following conjecture.
		
		\begin{conjecture}\label{conjmp}
			Let $(M,g,\Sigma)$ be an $s$-AH manifold as in Theorem \ref{finitemass} above with $R_g\geq -n(n-1)$ and $H_g\geq (n-1)\lambda_s$. Then for 
			any chart at infinity $F$ the vector $P_s(F)$ is time-like and  future-directed
			unless it vanishes, in which case $(M,g,\Sigma)$ is isometric to $(\mathbb
			H^n_s,b,\Sigma_s)$.
		\end{conjecture}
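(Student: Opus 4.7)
The plan is to prove Conjecture \ref{conjmp} via a Witten-type argument calibrated to the $\theta$-boundary conditions announced in the introduction. The starting observation is that the space $\mathcal N_{b,s}$ of static potentials is in natural bijection with a space of \emph{imaginary Killing spinors} on $(\mathbb H^n,b)$, namely solutions of $\nabla_X\phi_0=\tfrac{i}{2}X\cdot\phi_0$, under the pairing $V=\langle i\phi_0,\phi_0\rangle$ (and its normal partners). I would first verify, by a direct calculation along $\Sigma_s$ that uses Proposition \ref{secformSigma}, that the restrictions of these spinors satisfy a one-parameter algebraic relation of the form $\eta_s\cdot\phi_0=\mathcal P_\theta\phi_0$ where $\mathcal P_\theta$ is a projector interpolating between the chirality projector ($\lambda_s=0$) and the MIT bag projector ($|\lambda_s|\to 1$), with $\theta=\theta(\lambda_s)$ determined explicitly. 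This is precisely the boundary condition to be imposed in the Dirac problem.

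For a fixed chart at infinity $F$ and a prescribed imaginary Killing spinor $\phi_0$ on $\mathbb H^n_s$, the next step is to solve the boundary value problem on $(M,g,\Sigma)$: find $\phi$ with $D_g\phi=0$ on $M$, satisfying the $\theta$-boundary condition on $\Sigma$, and with $\phi-\phi_0\to 0$ at infinity in a suitable weighted Sobolev sense. Existence rests on the ellipticity and self-adjointness of the $\theta$-condition (established in Section \ref{spinorsbd}) together with a Fredholm/coercivity argument that exploits the DEC $R_g\geq -n(n-1)$. Once $\phi$ is produced, the Schrödinger-Lichnerowicz identity applied to the Killing connection $\widetilde\nabla_X=\nabla_X-\tfrac{i}{2}X\cdot$ yields, after integration by parts,
\[
\mathfrak m_{s,F}(V)=c_n\int_M\Bigl(|\widetilde\nabla\phi|^2+\tfrac{1}{4}\bigl(R_g+n(n-1)\bigr)|\phi|^2\Bigr)\,dv_g+c_n'\int_\Sigma\bigl(H_g-(n-1)\lambda_s\bigr)|\phi|^2\,d\sigma,
\]
with explicit positive constants $c_n,c_n'$. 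The crux is to check that the boundary integrand collapses to the displayed form after the $\theta$-condition is inserted, all cross terms involving $\Pi_s$ and $\partial_{\eta_s}V$ being absorbed thanks to the identities in Proposition \ref{secformSigma}; this is what forces the precise choice $\theta=\theta(\lambda_s)$. Letting $V$ range over the future causal cone in $\mathcal N_{b,s}$ and using the $\rho^s$-invariance from Theorem \ref{geoinv}, one concludes that $P_s(F)$ is causal future-directed.

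For the rigidity half, assume $P_s(F)=0$. Then for every $V\in\mathcal N_{b,s}$ the right-hand side vanishes, forcing $\widetilde\nabla\phi\equiv 0$ and hence an $(n+1)$-dimensional space of imaginary Killing spinors on $M$. A standard curvature computation then yields $\mathrm{sec}_g\equiv-1$, while the boundary $\theta$-condition satisfied by the full family forces $\Pi=\lambda_s\gamma$. A matching argument through the chart $F$ at infinity then upgrades this to a global isometry with $(\mathbb H^n_s,b,\Sigma_s)$. The sharpening from ``causal'' to ``timelike unless zero'' follows because a nonzero null $P_s(F)$ would produce a nonzero future-directed null $V$ on which $\mathfrak m_{s,F}(V)=0$, triggering the rigidity mechanism and hence $P_s(F)=0$, a contradiction. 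The principal obstacle is the very first step: calibrating the algebraic $\theta$-condition so that it is simultaneously elliptic and self-adjoint, preserved by every imaginary Killing spinor on the model, and productive of the correct sign in the boundary integrand. The nontrivial $O^\uparrow(1,n-1)$-action on $\mathcal N_{b,s}$ and the $s$-dependence of $\lambda_s$ together make this delicate, and it explains why a fixed boundary condition (chirality alone for $s=0$, or MIT alone) cannot cover the whole family, requiring precisely the interpolating conditions of Section \ref{spinorsbd}.
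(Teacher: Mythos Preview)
Your overall architecture matches the paper's: introduce the $\theta$-boundary operator with $\sin\theta=\lambda_s$, verify the model Killing spinors satisfy it, solve the Killing--Dirac boundary value problem, and derive the Witten-type mass formula with the boundary integrand $(H_g-(n-1)\lambda_s)|\phi|^2$. The positivity step (that $P_s(F)$ is causal future-directed) is essentially what the paper does. Two small corrections: the pairing the paper uses is $V_\Phi=|\Phi|^2$, not $\langle i\phi_0,\phi_0\rangle$; and the conjecture as stated is not assumed spin, so what you (and the paper) actually prove is Theorem~\ref{main:pmt}.

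The rigidity half of your proposal, however, has a genuine gap and diverges from the paper. You argue: if $P_s(F)=0$, the Witten identity vanishes for \emph{every} $V$, yielding an ``$(n+1)$-dimensional'' space of Killing spinors, hence $\mathrm{sec}_g\equiv-1$, and then a ``matching argument'' finishes. But the Witten formula only produces Killing spinors asymptotic to elements of $\mathcal K^{b,\pm,(\pm)_\theta}(\mathbb S\mathbb H^n_s)$, a space of complex dimension $2^{k-1}$, not $n+1$; and more seriously, your ``sharpening'' step (null nonzero $P_s(F)$ $\Rightarrow$ contradiction) only supplies \emph{one} null $V$ with $\mathfrak m_{s,F}(V)=0$, hence \emph{one} Killing spinor. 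A single imaginary Killing spinor gives only that $g$ is Einstein, not constant curvature, so your curvature step does not go through in the case you actually need.

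The paper resolves this by running the entire rigidity argument from a \emph{single} Killing spinor $\Psi^\theta$. From $\nabla^\pm\Psi^\theta=0$ plus the $\theta$-boundary condition it extracts (Proposition~\ref{geom:bd:det}) that $g$ is Einstein and $\Sigma$ is totally umbilical with $H_g=(n-1)\lambda_s$; then (Proposition~\ref{takes:care}) that $\Psi^\theta|_\Sigma$ is an imaginary Killing spinor for the induced metric $\gamma$ with parameter $\kappa$, so $\gamma$ is Einstein. Since $(\Sigma,\gamma)$ is itself asymptotically hyperbolic (modelled on $\mathbb H^{n-1}(-\kappa^2)$), Herzlich's Ashtekar--Hansen formula shows its mass vanishes, and the boundaryless rigidity of Chru\'sciel--Herzlich identifies $(\Sigma,\gamma)$ with $\mathbb H^{n-1}(-\kappa^2)$. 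Only then does the paper glue $(M,g,\Sigma)$ to the complementary model $(\mathbb H^n_{-s},b,\Sigma_{-s})$ along the now-identified boundary and invoke the boundaryless theory once more. Your ``matching argument through the chart $F$'' does not capture this two-step reduction (boundary first, then gluing), which is where the real work in the rigidity proof lies.
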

		
		\begin{remark}\label{rig:appl}
		Whenever this conjecture holds true we may define the numerical invariant
		\[
		\mathfrak m_s:=\sqrt{\langle P_s(F),P_s(F)\rangle_s}=\sqrt{P_s(F)_0^2-\sum_{a=2}^nP_s(F)_a^2},
		\]
		which happens to be independent of the chosen chart. This may be regarded as the total mass of the isolated 
		system whose (time-symmetric) initial data set is $(M,g,\Sigma)$. Notice that
		$\mathfrak m_s\geq 0$ with the equality holding if and only if
		$(M,g,\Sigma)$ is isometric to $(\mathbb H^n_s,b,\Sigma_s)$.
		\end{remark} 
	
As remarked in the
		introduction, our main result here confirms the
		conjecture (and hence the physical interpretation for $\mathfrak m_s$ above) in case $M$ is spin.
		
		\begin{theorem}\label{main:pmt}
			Conjecture \ref{conjmp} holds true in case $M$ is spin.
			\end{theorem}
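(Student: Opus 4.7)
The plan is to follow Witten's spinorial strategy, adapted to the $s$-AH geometry. Each static potential $V\in\mathcal N_{b,s}$ corresponds on the model $(\mathbb H^n,b)$ to an imaginary Killing spinor $\psi_0$ for the modified connection $\widehat\nabla_X\psi=\nabla_X\psi+\tfrac{i}{2}X\cdot\psi$. The $\theta$-boundary condition of Definition \ref{theta:cond} is designed, via the interpolation parameter $\theta=\theta(\lambda_s)$, precisely so that the imaginary Killing spinors on $\mathbb H^n$ whose associated potential lies in $\mathcal N_{b,s}$ (i.e.\ those satisfying the Neumann-type relation \eqref{neumanncond} of Proposition \ref{secformSigma}) are exactly the ones compatible with the boundary condition along $\Sigma_s$; the chirality case $\lambda_s=0$ and the MIT-bag case $\lambda_s=\pm 1$ are the two extremes of this family.

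Next I would transplant a fixed $\psi_0$ to the end of $M$ using the chart $F$, cut it off, and solve
\[
\mathcal D_g\psi=0,\quad \psi=\psi_0+\phi,\quad \phi\in H^1_{-\tau}(M),\quad \mathcal P_\theta\psi|_{\Sigma}=0,
\]
where $\mathcal D_g$ is the Dirac operator on $(M,g)$ and $\mathcal P_\theta$ is the projector associated with the $\theta$-condition. Standard Fredholm theory applies once one knows that the boundary condition is elliptic and self-adjoint, properties to be established in Section \ref{spinorsbd}; triviality of the $L^2$-kernel is a consequence of the Weitzenböck estimate below combined with $R_g\geq -n(n-1)$ and $H_g\geq(n-1)\lambda_s$.

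The heart of the argument is the Lichnerowicz--Weitzenböck identity for $\widehat\nabla$ applied on the truncated region $M\cap F(\mathbb H^n_{s,r_0}\cap\{r\leq R\})$. After integration by parts I expect to obtain
\begin{equation*}
\int_{M_R}\Bigl(|\widehat\nabla\psi|^2+\tfrac{1}{4}(R_g+n(n-1))|\psi|^2\Bigr)\,dv_g+\int_{\Sigma\cap M_R}\mathcal B_\theta(\psi)\,d\sigma_g=\oint_{F(S^{n-1}_{R,+})}\bigl\langle(\widehat\nabla_\mu+\mu\cdot\widehat{\mathcal D})\psi,\psi\bigr\rangle,
\end{equation*}
where the key computation (enabled by the choice of $\theta$) shows that $\mathcal B_\theta(\psi)$ is pointwise bounded below by a nonnegative multiple of $(H_g-(n-1)\lambda_s)|\psi|^2$. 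Under our hypotheses the entire left-hand side is nonnegative; the asymptotic analysis of the right-hand side, using the decay \eqref{asympthyp}, the structure equations of Proposition \ref{secformSigma} and a cancellation between the bulk and corner contributions analogous to the one performed in the proof of Theorem \ref{finitemass}, identifies the limit as $R\to+\infty$ with the pairing $\langle P_s(F),V\rangle_s$ of the mass vector \eqref{mass:vector} with the potential $V$ associated to $\psi_0$. This is exactly formula (\ref{maintheo2}). Running $V$ over the future cone in $\mathcal N_{b,s}$ forces $P_s(F)$ to be causal and future-directed, yielding the first part of Conjecture \ref{conjmp}.

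For rigidity, vanishing of $P_s(F)$ forces $\widehat\nabla\psi\equiv 0$ for the full $(n+1)$-dimensional family of solutions constructed above, so $M$ carries a maximal space of imaginary Killing spinors satisfying the $\theta$-condition at $\Sigma$. A routine curvature computation then shows that $g$ has constant sectional curvature $-1$ and that $\Sigma$ is totally umbilical with principal curvature $\lambda_s$, which together with the asymptotic normalization identifies $(M,g,\Sigma)$ isometrically with $(\mathbb H^n_s,b,\Sigma_s)$. The main obstacles I foresee are two intertwined algebraic verifications: first, that the $\theta$-boundary condition is elliptic and symmetric, which is the basis for all Fredholm theory used here; and second, that the Clifford-algebraic identities in $\mathcal B_\theta(\psi)$ combine with the umbilical identity $\Pi_s=\lambda_s\gamma_s$ in exactly the right way to produce a boundary integrand with the correct sign. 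Both of these hinge on the sharp choice of the interpolation parameter $\theta$ as a function of $\lambda_s$, and are presumably the reason for introducing the $\theta$-family of conditions in the first place.
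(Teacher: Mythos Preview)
Your outline for the positivity half is essentially the paper's argument: transplant an imaginary Killing spinor, solve the $\theta$-boundary value problem for the Killing--Dirac operator, apply the integral Lichnerowicz formula (\ref{parts4}), and identify the boundary-at-infinity term with $\mathfrak m_{s,F}(V_\Phi)$. The paper carries this out exactly as you describe, including the observation that the boundary integrand on $\Sigma$ collapses to $\tfrac12(H_g-(n-1)\lambda_s)|\Psi|^2$ once the $\theta$-condition with $\sin\theta=\lambda_s$ is imposed. One minor slip: the family of static potentials $\mathcal N_{b,s}$ is $n$-dimensional, not $(n+1)$-dimensional, and the Killing spinors you actually produce form a $2^{k-1}$-complex-dimensional space (Proposition \ref{dimkill}), so your dimension count for rigidity is off.

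The genuine gap is in the rigidity step. A single imaginary Killing spinor gives only that $g$ is \emph{Einstein} with ${\rm Ric}_g=-(n-1)g$ (Baum's theorem), not that it has constant sectional curvature $-1$; and even the full $2^{k-1}$-dimensional family satisfying the $\theta$-condition is only half of the maximal number, so no ``routine curvature computation'' delivers the space form directly. The paper's route is substantially more elaborate: from one Killing spinor $\Psi^\theta$ it extracts (Proposition \ref{geom:bd:det}) that $g$ is Einstein and $\Sigma$ is totally umbilical with $H_g=(n-1)\lambda_s$, then shows (Proposition \ref{takes:care}) that $\Psi^\theta|_\Sigma$ is an imaginary Killing spinor for the induced metric $\gamma$ with Killing number $\kappa/2$. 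This makes $(\Sigma,\gamma)$ an asymptotically hyperbolic \emph{boundaryless} $(n-1)$-manifold which is Einstein, so Herzlich's Ashtekar--Hansen formula forces its mass vector to vanish, and the boundaryless rigidity theorem of Chru\'sciel--Herzlich identifies $(\Sigma,\gamma)$ with $\mathbb H^{n-1}(-\kappa^2)$. Only then can one glue $(M,g,\Sigma)$ to the complementary model half-space along this common boundary (with a separate smoothness argument, Remark \ref{reg:smooth}) and invoke the boundaryless hyperbolic positive mass theorem once more to conclude $(M,g,\Sigma)=(\mathbb H^n_s,b,\Sigma_s)$. Your sketch skips this entire chain---the restriction-to-boundary step, the appeal to Herzlich's formula, the two invocations of the boundaryless PMT, and the gluing---and these are not details one can absorb into ``asymptotic normalization''.
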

		
		As already observed, Theorem \ref{rig:state} (and hence Corollary \ref{souam:cor}) is an immediate consequence of Theorem \ref{main:pmt}.

		
		\section{Spinors on manifolds with boundary}\label{spinorsbd}
		
		In this section we review the results in the theory of spinors on  manifolds carrying a (possibly non-compact) boundary $\Sigma$ which are needed in the rest of the paper, including the appropriate integral  version of the celebrated Lichnerowicz formula. Also, we introduce a family of boundary conditions for spinors which interpolates between chirality and MIT bag boundary conditions (Definition \ref{theta:cond} below) and plays a central role in the proof of our main results.   
		
		\subsection{The integral Lichnerowicz formula on spin manifolds with boundary}\label{lichbd}
		We assume that the given manifold $(M,g)$ is spin and fix once and for all a spin structure on  $TM$. We denote by $\mathbb SM$ the associated (Hermitian) spinor bundle and by $\nabla$ both the Levi-Civita connection of $TM$
		and its compatible lift to $\mathbb SM$. 
		Also, each tangent vector $X$ induces a linear map
		$\c(X):\mathbb SM\to\mathbb SM$, the  (left) {Clifford multiplication} by $X$. For our purposes, it suffices to know that these structures satisfy a few compatibility conditions: 
		
			\begin{enumerate}
			\item $\mathfrak c(X)\mathfrak c(Y)+\mathfrak c(Y)\mathfrak c(X)=-2\langle X,Y\rangle_g$;
			\item $\langle \mathfrak c(X)\Psi,\mathfrak c(X)\Phi\rangle=|X|^2\langle\Psi,\Phi\rangle$;
			\item $\nabla_X(\mathfrak c(Y)\Psi)=\mathfrak c(\nabla_XY)\Psi+\mathfrak c(Y)(\nabla_X\Psi)$.
		\end{enumerate} 
		Here, $X,Y\in\Gamma(TM)$ are tangent vector fields and $\Psi,\Phi\in\Gamma(\mathbb SM)$ are spinors. 	When emphasizing the dependence of $\c$ on $g$ is needed, we append a sub/superscript and  write $\c=\c^g$ for instance (and similarly for other geometric invariants associated to  the given spin structure).
		
		We define the corresponding Dirac operator acting on spinors by 
		\[
		D=\c\circ\nabla,
		\]
		where we  view the connection as a linear map 
		\[
		\nabla:\Gamma(\mathbb SM)\to \Gamma(T^*M\otimes\mathbb SM)=\Gamma(TM\otimes\mathbb SM)
		\] 
		and the identification $T^*M=TM$ comes 
		from the metric. 
	For our purposes, it is convenient to slightly modify this classical construction. Thus, 
		we define the {\em Killing connections} on  $\mathbb S M$ by
		\begin{equation}\label{kill:conn}
		\nabla_X^{\pm}=\nabla_X\pm\frac{\bf i}{2}\c(X),\quad X\in\Gamma(TM),
		\end{equation}
	so that 
		the corresponding {\em Killing-Dirac operators} are defined in the usual way, namely,
		\[
		D^{\pm}:=\c\circ\nabla^\pm=
			D\mp\frac{n{\bf i}}{2}.
		\]
	As a consequence, 
	we 
		obtain the integral version of the fundamental Lichnerowicz formula: 
		\begin{equation}\label{partshyp}
			\int_\Omega\left(|\nabla^{\pm} \Psi|^2-| D^{\pm}\Psi|^2+\frac{R_g+n(n-1)}{4}|\Psi|^2\right)dM={\rm Re}\int_{\partial\Omega}
			\left\langle {{\mathcal W}}^{\pm}(\nu)\Psi,\Psi\right\rangle d\Sigma.
		\end{equation}
		Here, $\Psi\in\Gamma(\mathbb SM)$, $\Omega\subset M$ is a compact domain with a nonempty boundary $\partial\Omega$, which we assume endowed with its inward pointing unit normal $\nu$, and 
			\[
			{\mathcal W}^{\pm}(\nu)=-(\nabla_\nu^{\pm}+\c(\nu) D^{\pm}).
		\]

		A key step in our argument is to rewrite the right-hand side of (\ref{partshyp}) along the portion of $\partial \Omega$ lying on $\Sigma=\partial M$ in terms of the corresponding extrinsic geometry. To proceed, note that $\mathbb SM|_\Sigma$ becomes a Dirac bundle if endowed with the Clifford multiplication
		\begin{equation}\label{cliff:mult:ext}
		\c^{\intercal}(X)\Psi=\c(X)\c(\nu) \Psi,
		\end{equation}
		and the connection 
		\begin{equation}\label{cliff:conn:ext}
			\nabla^{\intercal}_X\Psi  =  \nabla_X\Psi+\frac{1}{2}\c^{\intercal}(\nabla_X\nu)\Psi,
		\end{equation}
		so
		the corresponding Dirac operator $D^{\intercal}:\Gamma(\mathbb \mathbb SM|_\Sigma)\to\Gamma(\mathbb SM|_\Sigma)$ is
		$$
		D^{\intercal}=\c^{\intercal}\circ\nabla^{\intercal}.
		$$
	It follows that 
		\[
		D^{\intercal}-\frac{H_g}{2}=-(\nabla_\nu+\c(\nu)D),
		\]
		which combined with (\ref{partshyp}) yields the following important result for our arguments.
		
		\begin{proposition}\label{intpartf} Under the conditions above,
			\begin{eqnarray}\label{parts3}
				 \int_\Omega\left(|\nabla^{\pm} \Psi|^2-|D^{\pm}\Psi|^2+\frac{R_g+n(n-1)}{4}|\Psi|^2\right)d\Omega
				& = &\int_{\partial\Omega\cap\Sigma}
				\left(\langle D^{\intercal,\pm}\Psi,\Psi\rangle-\frac{H_g}{2}|\Psi|^2\right) d\Sigma\nonumber\\
				& & \quad + {\rm Re}\int_{\partial\Omega\cap {\rm int}\,M}
				\left\langle {{\mathcal W}}^{\pm}(\nu)\Psi,\Psi\right\rangle d\partial \Omega,
			\end{eqnarray}
			where
			\begin{equation}\label{newdirac}
				D^{\intercal,\pm}=D^{\intercal}\pm \frac{(n-1)\bf i}{2}\c(\nu):\Gamma(\mathbb S{\Sigma})\to
				\Gamma(\mathbb S{\Sigma}).
			\end{equation}
		\end{proposition}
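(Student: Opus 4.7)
The proof boils down to combining the integral Lichnerowicz identity (\ref{partshyp}) with the boundary identity stated just above the proposition, namely
$$D^{\intercal}-\frac{H_g}{2}=-(\nabla_\nu+\c(\nu)D),$$
and then splitting the surface integral according to $\partial\Omega = (\partial\Omega\cap\Sigma) \cup (\partial\Omega\cap \text{int}\, M)$. The interior piece is already in the form required by (\ref{parts3}), so the only task is to rewrite the $\Sigma$-contribution.

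The heart of the matter is the pointwise algebraic identity
$$\mathcal{W}^{\pm}(\nu) \;=\; D^{\intercal,\pm} - \frac{H_g}{2} \quad \text{on } \Sigma.$$
To establish it I would simply unfold the definitions: from (\ref{kill:conn}) and the induced formula $D^{\pm} = D \mp \frac{n\bf i}{2}$,
$$\mathcal{W}^{\pm}(\nu) = -\left(\nabla_\nu \pm \frac{\bf i}{2}\c(\nu)\right) - \c(\nu)\left(D \mp \frac{n\bf i}{2}\right) = -(\nabla_\nu + \c(\nu)D) \pm \frac{(n-1)\bf i}{2}\c(\nu).$$
Applying the boundary identity to the first summand replaces it by $D^{\intercal} - H_g/2$, and invoking definition (\ref{newdirac}) repackages the remaining terms into $D^{\intercal,\pm} - H_g/2$.

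Inserting this identity into the $\Sigma$-portion of the right-hand side of (\ref{partshyp}) yields precisely the first boundary integral in (\ref{parts3}); the outer ${\rm Re}$ may be dropped on that piece because, by the formal self-adjointness of $D^{\intercal}$ together with the pointwise Hermiticity of $\bf i\,\c(\nu)$ (since $\c(\nu)$ is skew-Hermitian), the integral $\int_{\partial\Omega\cap\Sigma}\langle D^{\intercal,\pm}\Psi,\Psi\rangle\, d\Sigma$ is automatically real. The interior contribution is carried over verbatim. I do not anticipate a substantive obstacle: the computation is essentially sign-bookkeeping, the only genuine input being the boundary identity for $D^{\intercal}$, whose standard proof uses an orthonormal frame adapted to $\Sigma$ in which the off-diagonal Clifford products cancel against the symmetry of the second fundamental form, leaving precisely the mean-curvature term.
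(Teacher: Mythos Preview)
Your argument is correct and matches the paper's approach exactly: the proposition follows immediately from (\ref{partshyp}) combined with the boundary identity $D^{\intercal}-H_g/2=-(\nabla_\nu+\c(\nu)D)$, and your explicit computation $\mathcal W^{\pm}(\nu)=D^{\intercal,\pm}-H_g/2$ simply spells out the single step the paper leaves implicit. One minor caveat: formal self-adjointness of $D^{\intercal}$ guarantees reality of $\int\langle D^{\intercal}\Psi,\Psi\rangle$ only over \emph{closed} hypersurfaces, so when $\partial\Omega\cap\Sigma$ has a corner the dropped ${\rm Re}$ is a slight abuse---one shared by the paper and harmless in every later application, where the integrand is shown to be pointwise real under the $\theta$-boundary condition.
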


		\subsection{$\theta$-boundary conditions}\label{chirbd}
		To further simplify the integral over $\partial\Omega\cap\Sigma$ in (\ref{parts3}) we must impose suitable boundary conditions on $\Psi$.

		\begin{definition}\label{chiralop}
			A {\em chirality operator}	on a spin manifold $(M,g)$ is a (pointwise) self-adjoint involution $Q:\Gamma(\mathbb SM)\to\Gamma(\mathbb SM)$ which is parallel and anti-commutes with Clifford multiplication by tangent vectors.
		\end{definition}

		To simplify the exposition, we henceforth assume that  $n$ is even, as in this case  it is well-known that  Clifford multiplication by the complex volume element $\omega={\bf i}^{n/2}\c(e_1)\cdots \c(e_n)$
		provides a natural chirality operator, namely, $Q=\omega$. Nevertheless, see Remark \ref{n:odd} below for the indication on how a chirality operator may be constructed in odd dimensions as well, so that the argument below may be carried out in full generality.

			We now fix $\kappa\in(0,1]$ and set $\tau=\pm\sqrt{1-\kappa^2}\in(-1,1)$, so that $e^{{\bf i}\theta}=\kappa+\tau{\bf i}$, where $\kappa=\cos\theta$ and $\tau=\sin\theta$, $\theta\in(-\pi/2,\pi/2)$.  
			
		\begin{definition}\label{theta:cond}
		The $\theta$-{\em boundary  operator} $Q_{\theta,g}:\Gamma(\mathbb SM|_{\Sigma})\to\Gamma(\mathbb SM|_{\Sigma})$
		associated to $Q=\omega$ as above is 
		 	\begin{equation}\label{mu:bd:2}
		 	Q_{\theta,g}=e^{{\bf i}\theta Q}Q\c(\nu). 
		 \end{equation} 
		\end{definition}
	
		\begin{remark}\label{theta:v}
			The involutiveness of $Q$ ($Q^2=I$) implies that  $\sin (\theta Q)=(\sin\theta)Q$ and $\cos(\theta Q)=(\cos \theta) I$, so that  
				\begin{equation}\label{mu:bd}
				Q_{\theta,g}=\kappa Q\c(\nu)+\tau{\bf i}\c(\nu).
				\end{equation} 
			Hence, as $\theta$ varies between $\theta=0$ and $\theta=\pm\pi/2$, $Q_{\theta,g}$ interpolates between 
				the chirality boundary operator $Q\c(\nu)$ used in \cite{almaraz2020mass}
			and the MIT bag boundary operators $\pm{\bf i}\c(\nu)$ used in \cite{almaraz2014positive}.
		. 
			\end{remark}

			\begin{remark}\label{geom:mean:p}
			The complex  phase $e^{{\bf i}\theta}=\kappa+\tau{\bf i}$ acquires a geometric meaning if we impose that
			\begin{equation}\label{geo:mean}
				\lambda_s=\sin \theta,
				\end{equation}
			where $s\in\mathbb R$ is the parameter appearing in Proposition \ref{secformSigma}. This matching of parameters, which we always assume to hold throughout this work,  means that $\Sigma_s$ has constant mean curvature equal to $(n-1)\tau$ and is intrinsically isometric to $\mathbb H^{n-1}(-\kappa^2)$, the hyperbolic $(n-1)$-space with curvature $-\kappa^2$. From this perspective, the fundamental trigonometric identity 
			\begin{equation}\label{trig:fund}
					\kappa^2+\tau^2=1
				\end{equation}
		 is just Gauss equation in disguise.
			\end{remark}
		
		\begin{remark}\label{gen:desc:theta}
				The extrinsic data $(\mathbb SM|_{\Sigma},\c^{\intercal},\nabla^{\intercal})$ associated to a hypersurface embedding $\Sigma\hookrightarrow M$ can be identified to objects constructed out of the {\em intrinsic}  data $(\mathbb S\Sigma,\c^\gamma,\nabla^\gamma)$, where $\gamma=g|_\Sigma$ is the induced metric along $\Sigma$; see \cite[Section 2.2]{morel2001eigenvalue} or \cite[Section 2.2]{hijazi2015holographic}. Since we are assuming that $n$ is even, 
				the fact that $Q=\omega$ as above is an involution may be used to split the spinor bundle of $M$ as an orthogonal direct sum of its $\pm 1$-eigenbundles. In this way we obtain the  {\em chiral decomposition}
				\[
					\mathbb SM=\mathbb SM^+\oplus\mathbb SM^-,
				\]
				so that  
				\[
					Q\Psi^\pm=\pm\Psi^\pm,\quad \Psi=\Psi^++\Psi^-,\quad \Psi^\pm\in\Gamma(\mathbb S M^\pm).
				\]
				Upon restriction to $\Sigma$ this yields the first identification, namely, 
			\begin{equation}\label{split:bd}
				\mathbb SM|_{\Sigma}=\mathbb SM^+|_{\Sigma}\oplus \mathbb SM^-|_{\Sigma}=\mathbb S\Sigma\oplus\mathbb S\Sigma,
			\end{equation}
			with
			\begin{equation}\label{ident}
				\Psi\in\Gamma(	\mathbb SM|_{\Sigma})\mapsto 
				\left(
				\begin{array}{c}
					\Psi_1\\
					\Psi_2
				\end{array}
				\right),
				\quad \Psi_i\in \Gamma(\mathbb S\Sigma),
			\end{equation}
			satisfying
			\begin{equation}\label{q:action}
				Q\left(
				\begin{array}{c}
					\Psi_1\\
					\Psi_2
				\end{array}
				\right)=
				\left(
				\begin{array}{c}
					\Psi_1\\
					-	\Psi_2
				\end{array}
				\right).
			\end{equation}
			Moreover, under (\ref{ident}), 
			\[
			\c^\intercal=
			\left(
			\begin{array}{cc}
				\c^\gamma & 0\\
				0 & - \c^\gamma
			\end{array}
			\right),
			\quad 
				\nabla^\intercal=
			\left(
			\begin{array}{cc}
				\nabla^\gamma & 0\\
				0 &  \nabla^\gamma
			\end{array}
			\right),
			\]
so		that 
			\[
				D^\intercal=
			\left(
			\begin{array}{cc}
				D^\gamma & 0\\
				0 & - D^\gamma
			\end{array}
			\right),
			\]
		where $D^\gamma=\c^\gamma\circ\nabla^\gamma$ is the intrinsic Dirac operator associated to $\gamma$. 
			Finally, we may  agree that 
			\begin{equation}\label{cliff:prod}
				\c(\nu)=-{\bf i}\left(
				\begin{array}{cc}
					0 & I\\
					I & 0
				\end{array}
				\right).
			\end{equation}
		\end{remark}
		
			The identifications in Remark \ref{gen:desc:theta} provide a rather  explicit description of the $\theta$-boundary operator which will be useful later on.
		
		\begin{proposition}\label{comp:0}
			With the notation in Remark \ref{gen:desc:theta},  the action of $Q_{\theta,g}$ on a (restricted) spinor is  
				\begin{equation}\label{comp:1}
				Q_{\theta,g}
				\left(
				\begin{array}{c}
					\Psi_1\\
					\Psi_2
				\end{array}	
				\right)=
				\left(
				\begin{array}{c}
					-{\bf i}e^{{\bf i}\theta}\Psi_2\\
					{\bf i}e^{-{\bf i}\theta}\Psi_1
				\end{array}	
				\right)=
				\left(
				\begin{array}{c}
					(\tau-\kappa{\bf i})\Psi_2\\
					(\tau+\kappa{\bf i})\Psi_1
				\end{array}	
				\right).
			\end{equation}
		\end{proposition}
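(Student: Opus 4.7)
My plan is to verify this identity by direct matrix computation, using the two forms of $Q_{\theta,g}$ together with the explicit matrix representations of $Q$ and $\c(\nu)$ recorded in Remark \ref{gen:desc:theta}.

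First I would start from the compact form (\ref{mu:bd}) rather than the exponential form (\ref{mu:bd:2}), since the former already resolves the operator exponential using the involutiveness of $Q$ noted in Remark \ref{theta:v}. Thus I need to compute $Q_{\theta,g} = \kappa\, Q\c(\nu) + \tau{\bf i}\,\c(\nu)$ under the block identification (\ref{ident}). Using (\ref{q:action}) to write $Q = \operatorname{diag}(I,-I)$ and (\ref{cliff:prod}) to write $\c(\nu) = -{\bf i}\bigl(\begin{smallmatrix} 0 & I \\ I & 0 \end{smallmatrix}\bigr)$, two straightforward multiplications give
\[
Q\c(\nu) = -{\bf i}\begin{pmatrix} 0 & I \\ -I & 0 \end{pmatrix}, \qquad {\bf i}\c(\nu) = \begin{pmatrix} 0 & I \\ I & 0 \end{pmatrix}.
\]
Taking the prescribed linear combination then yields
\[
Q_{\theta,g} = \begin{pmatrix} 0 & \tau - \kappa{\bf i} \\ \tau + \kappa{\bf i} & 0 \end{pmatrix},
\]
which is the right-hand entry of (\ref{comp:1}).

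To match the exponential form on the left side of (\ref{comp:1}), I would just observe the trivial identities $-{\bf i}e^{{\bf i}\theta} = -{\bf i}(\kappa + \tau{\bf i}) = \tau - \kappa{\bf i}$ and ${\bf i}e^{-{\bf i}\theta} = {\bf i}(\kappa - \tau{\bf i}) = \tau + \kappa{\bf i}$, which rewrite the coefficients in the conventional phase form.

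I do not anticipate a genuine obstacle here: the statement is essentially bookkeeping, and the only subtlety is keeping track of consistent sign conventions for $\c(\nu)$ and $Q$ when passing through the chiral decomposition (\ref{split:bd}). The computation is entirely formal once the matrix identifications from Remark \ref{gen:desc:theta} are in place, and its real purpose is to prepare for later estimates on the boundary Dirac operator, where the explicit off-diagonal shape of $Q_{\theta,g}$ will interact cleanly with the diagonal shape of $D^\intercal$.
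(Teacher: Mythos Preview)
Your proof is correct and follows exactly the approach indicated in the paper, which simply instructs the reader to combine (\ref{mu:bd}), (\ref{q:action}) and (\ref{cliff:prod}). You have merely written out the matrix computation that the paper leaves implicit.
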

	\begin{proof}
			Use (\ref{mu:bd}), (\ref{q:action}) and (\ref{cliff:prod}). 
		\end{proof}

		\begin{remark}\label{n:odd}
			In the odd dimensional case, we may always define a chirality operator in the direct sum bundle $\mathbb SM\oplus\mathbb SM$ by simply switching the factors. Even though we do not carry out the details, it turns out that this simple trick allows us to straightforwardly extend our main results here to this case; see \cite{almaraz2020mass} for the pertinent details when $\theta=0$.
			\end{remark}

		For later reference, we now isolate a few algebraic facts concerning this formalism. In what follows, we denote by $[A,B]$ (respectively, $\{A,B\}$) the commutator (respectively, the anti-commutator) of the operators $A$ and $B$. 
		
		\begin{proposition}\label{alg:form}
			The following properties hold:
			\begin{enumerate}
				\item $Q_{\theta,g}$ is a self-adjoint involution;
				\item $\{Q,Q_{\theta,g}\}=0$;
				\item $\{D^\intercal,Q_{\theta,g}\}=0$;
				\item $\{\c(\nu),Q_{\theta,g}\}=-2\tau{\bf i}$;
				\end{enumerate}
			\end{proposition}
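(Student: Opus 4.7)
The plan is to unpack the formula
\[
Q_{\theta,g}=\kappa Q\c(\nu)+\tau{\bf i}\c(\nu)
\]
from Remark \ref{theta:v} and verify each assertion by direct pointwise manipulation, using only the Clifford/spin axioms at the start of Section \ref{lichbd}, the defining properties of the chirality operator $Q$ (self-adjointness, involutiveness, parallelness, anti-commutation with $\c(X)$), and the trigonometric identity $\kappa^2+\tau^2=1$ from (\ref{trig:fund}). Throughout, set $A=Q\c(\nu)$ and $B={\bf i}\c(\nu)$; the computations reduce to $A^{2}=-Q^{2}\c(\nu)^{2}=1$, $B^{2}=-\c(\nu)^{2}=1$, and the single cross-relation $\{A,B\}=0$, which follows directly from $\{Q,\c(\nu)\}=0$.

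For (1), the self-adjointness of $A$ comes from $A^{*}=\c(\nu)^{*}Q^{*}=-\c(\nu)Q=Q\c(\nu)=A$, where I use that $\c(\nu)$ is skew-Hermitian and anti-commutes with $Q$; similarly $B^{*}=-{\bf i}\cdot(-\c(\nu))={\bf i}\c(\nu)=B$. Since $\kappa,\tau\in\R$, $Q_{\theta,g}$ is self-adjoint. Expanding $(\kappa A+\tau B)^{2}$, the cross terms cancel by $\{A,B\}=0$ and $A^{2}=B^{2}=I$, so $Q_{\theta,g}^{2}=(\kappa^{2}+\tau^{2})I=I$ by (\ref{trig:fund}). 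For (2), $\{Q,A\}=Q^{2}\c(\nu)-Q^{2}\c(\nu)=0$ (using $\c(\nu)Q=-Q\c(\nu)$) and $\{Q,B\}={\bf i}(Q\c(\nu)+\c(\nu)Q)=0$, so $\{Q,Q_{\theta,g}\}=0$. For (4), one checks $\{\c(\nu),A\}=-Q\c(\nu)^{2}+Q\c(\nu)^{2}=0$ and $\{\c(\nu),B\}=2{\bf i}\c(\nu)^{2}=-2{\bf i}$, giving $\{\c(\nu),Q_{\theta,g}\}=-2\tau{\bf i}$.

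The genuinely substantive point is (3), since $D^{\intercal}$ is a differential operator rather than a purely pointwise algebraic symbol. The cleanest route is to pass to the intrinsic splitting of Remark \ref{gen:desc:theta}: under the identification (\ref{ident}) one has
\[
D^{\intercal}=\begin{pmatrix}D^{\gamma}&0\\ 0&-D^{\gamma}\end{pmatrix},
\]
while Proposition \ref{comp:0} gives
\[
Q_{\theta,g}=\begin{pmatrix}0&\tau-\kappa{\bf i}\\ \tau+\kappa{\bf i}&0\end{pmatrix}.
\]
The product $D^{\intercal}Q_{\theta,g}$ has off-diagonal entries $(\tau-\kappa{\bf i})D^{\gamma}$ and $-(\tau+\kappa{\bf i})D^{\gamma}$, whereas $Q_{\theta,g}D^{\intercal}$ has the opposite off-diagonal entries; hence $\{D^{\intercal},Q_{\theta,g}\}=0$. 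I expect this matrix computation to be the only step requiring any bookkeeping, and it is already prepared for us by the preceding remark and proposition; everything else is two-line Clifford algebra.
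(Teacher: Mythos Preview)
Your argument is correct. For items (1), (2) and (4) you do essentially what the paper does: decompose $Q_{\theta,g}$ into the two self-adjoint pieces $A=Q\c(\nu)$ and $B={\bf i}\c(\nu)$ and check the relevant (anti-)commutators termwise. The only cosmetic difference is that for the involutiveness in (1) the paper works with the exponential form $Q_{\theta,g}=e^{{\bf i}\theta Q}Q\c(\nu)$ and the relation $\c(\nu)e^{{\bf i}\theta Q}=e^{-{\bf i}\theta Q}\c(\nu)$, whereas you expand $(\kappa A+\tau B)^2$ directly; the two are equivalent and your version is arguably more transparent.

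The one genuine divergence is in (3). You use the explicit $2\times 2$ block description of $D^{\intercal}$ and $Q_{\theta,g}$ from Remark~\ref{gen:desc:theta} and Proposition~\ref{comp:0}; the paper acknowledges that this works for $n$ even but instead gives an abstract argument valid whenever a chirality operator exists: since $[D^{\intercal},Q]=0$ one has $[D^{\intercal},e^{{\bf i}\theta Q}]=0$, and combining this with $\{D^{\intercal},\c(\nu)\}=0$ gives $D^{\intercal}Q_{\theta,g}=e^{{\bf i}\theta Q}QD^{\intercal}\c(\nu)=-Q_{\theta,g}D^{\intercal}$. The paper's route has the advantage of not relying on the chiral splitting and hence carries over verbatim to the odd-dimensional setting of Remark~\ref{n:odd}; your matrix computation is shorter but tied to the even-dimensional identification. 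Since the paper explicitly restricts to $n$ even in this section, your approach is entirely adequate here.
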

		
		\begin{proof}
			This already follows (for $n$ even) from the explicit description of these objects in Remark \ref{gen:desc:theta} and Proposition \ref{comp:0}, but we provide here an alternate abstract reasoning that works whenever a chirality operator is available. First, self-adjointness of $Q_{\theta,g}$ follows from it being a {\em real} linear combination of  $Q\c(\nu)$ and ${\bf i}\c(\nu)$, which are easily checked to meet this property.
			Since $\{Q,\c(\nu)\}=0$ we have $\c(\nu) e^{i\theta Q}=e^{-i\theta
				Q}\c(\nu)$ and by (\ref{mu:bd:2}),
			\begin{eqnarray*}
				Q_{\theta,g}^2 & = & e^{{\bf i}\theta Q}Q\c(\nu) e^{{\bf i}\theta Q}Q\c(\nu)\\
				& = & Q e^{{\bf i}\theta Q}\c(\nu) e^{{\bf i}\theta Q}Q\c(\nu)\\
				& = & Q \c(\nu) e^{-{\bf i}\theta Q} e^{{\bf i}\theta Q}Q\c(\nu) \\
				& = & Q\c(\nu) Q\c(\nu)\\
				& = & -Q^2\c(\nu)^2,
			\end{eqnarray*}
		from which the involutiveness of  $Q_{\theta,g}$ follows. The second item is an immediate consequence of the definitions.
			To proceed, note that $[D^\intercal,Q]=0$, so that $[D^\intercal,e^{i\theta Q}]=0$ as well. Also, $\{D^\intercal,\c(\nu) \}=0$, so that
		\begin{eqnarray*}
			D^\intercal Q_{\theta,g} & = & D^\intercal e^{{\bf i}\theta Q}Q\c(\nu)\\
			&  = & e^{{\bf i}\theta Q}QD^\intercal\c(\nu)\\
			& = & -e^{{\bf i}\theta Q}Q\c(\nu) D^\intercal,
		\end{eqnarray*}
		which gives the third item. Finally, the fourth item is proved by first checking that $\{\c(\nu),Q\c(\nu)\}=0$ and $\{\c(\nu),{\bf i}\c(\nu)\}=-2{\bf i}$ and then using this in (\ref{mu:bd}).
		\end{proof}
		
		Since $Q_{\theta,g}$ is a self-adjoint involution, we may consider the projections
		\[
		P^{(\pm)}_{\theta}=\frac{1}{2}\left({\rm I}_{\mathbb SM|_{\Sigma}}\pm Q_{\theta,g}\right):\Gamma(\mathbb SM|_\Sigma)\to\Gamma(V_\theta^{(\pm)})
		\] 
		onto the $\pm 1$-eigenbundles $V_\theta^{(\pm)}$ of $Q_{\theta,g}$. Thus, $\Psi\in\Gamma(V_\theta^{(\pm)})$ if and only if $Q_{\theta,g}\Psi=\pm\Psi$. 
For any $\Psi\in\Gamma(\mathbb SM|_\Sigma)$ we set $\Psi_\theta^{(\pm)}=P^{(\pm)}_{\theta}\Psi\in\Gamma(V_\theta^{(\pm)})$, so that 
		\[
		\Psi=\Psi_\theta^{(+)}+\Psi_\theta^{(-)},
		\]
		an orthogonal decomposition. From Proposition \ref{alg:form}, (3), we have 
		$D^\intercal P_\theta^{(\pm)}=P_\theta^{(\mp)}D^\intercal$, which gives 
		\begin{equation}\label{idpm}
			\langle D^{\intercal} \Psi,\Psi\rangle=\langle D^{\intercal} \Psi_\theta^{(+)},\Psi_\theta^{(-)}\rangle+\langle D^{\intercal} \Psi_\theta^{(-)},\Psi_\theta^{(+)}\rangle.
		\end{equation}
		
		\begin{definition}\label{chirality}
		We say that $\Psi\in\Gamma(\mathbb SM)$ satisfies a $\theta$-{\em boundary condition} if any of the identities 
		\begin{equation}\label{theta:bd:pm}
		Q_{\theta,g}\Psi=\pm\Psi
		\end{equation}
		holds everywhere along $\Sigma=\partial M$ (equivalently, 	$\Psi_\theta^{(\mp)}=0$ ).
		\end{definition}
		
		\begin{proposition}\label{streg}
			If $\Psi$ as in Proposition \ref{intpartf} satisfies the $\theta$-boundary condition  (\ref{theta:bd:pm}) then
			\begin{eqnarray}\label{parts4}
				{\rm Re}\int_{\partial\Omega\cap {\rm int}\,M}
				\left\langle {{\mathcal W}}^{\pm}(\nu)\Psi,\Psi\right\rangle d\partial\Omega
				& = & 
				\int_\Omega\left(|\nabla^{\pm} \Psi|^2-| D^{\pm}\Psi|^2+\frac{R_g+n(n-1)}{4}|\Psi|^2\right)d\Omega\nonumber\\
				&  &\quad +\int_{\partial\Omega\cap\Sigma}
				\frac{1}{2}\left(H_g-(n-1)\tau\right)|\Psi|^2 d\Sigma.
			\end{eqnarray} 
		\end{proposition}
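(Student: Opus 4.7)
The plan is to derive (\ref{parts4}) from the integral formula of Proposition \ref{intpartf} by simplifying the boundary integrand along $\partial\Omega\cap\Sigma$ under the $\theta$-boundary condition. Concretely, after isolating the interior-boundary term on the left-hand side of (\ref{parts3}), the target (\ref{parts4}) follows once I establish the pointwise identity
\[
\langle D^{\intercal,\pm}\Psi,\Psi\rangle=\frac{(n-1)\tau}{2}|\Psi|^2
\]
on $\partial\Omega\cap\Sigma$; the resulting $-\tfrac{(n-1)\tau}{2}|\Psi|^2$ then combines with the $-\tfrac{H_g}{2}|\Psi|^2$ already present to produce the claimed $+\tfrac{1}{2}(H_g-(n-1)\tau)|\Psi|^2$ term after transposition.

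Splitting $D^{\intercal,\pm}=D^\intercal\pm\tfrac{(n-1){\bf i}}{2}\c(\nu)$ according to (\ref{newdirac}), I treat the two pieces separately. The intrinsic piece vanishes: by item (3) of Proposition \ref{alg:form} the operator $D^\intercal$ anticommutes with $Q_{\theta,g}$ and hence sends $V_\theta^{(\pm)}$ into $V_\theta^{(\mp)}$, so the splitting identity (\ref{idpm}) rewrites $\langle D^\intercal\Psi,\Psi\rangle$ as a sum of two cross pairings of $\Psi_\theta^{(+)}$ with $\Psi_\theta^{(-)}$; but (\ref{theta:bd:pm}) forces one of these two components to be identically zero, so both cross terms drop and $\langle D^\intercal\Psi,\Psi\rangle=0$. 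For the normal piece I would apply item (4) of Proposition \ref{alg:form}, namely $\{\c(\nu),Q_{\theta,g}\}=-2\tau{\bf i}$, to $\Psi$, pair the resulting spinor identity with $\Psi$, and transfer $Q_{\theta,g}$ across the Hermitian inner product using its self-adjointness (item (1) of the same proposition); writing $Q_{\theta,g}\Psi=\epsilon\Psi$ with $\epsilon\in\{+1,-1\}$, the two terms coincide and one obtains $\langle\c(\nu)\Psi,\Psi\rangle=-\epsilon\tau{\bf i}|\Psi|^2$, so that $\pm\tfrac{(n-1){\bf i}}{2}\langle\c(\nu)\Psi,\Psi\rangle=\pm\epsilon\tfrac{(n-1)\tau}{2}|\Psi|^2$.

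The step that requires the most care is the bookkeeping of signs: the Killing sign in $\nabla^\pm,D^\pm,{\mathcal W}^\pm,D^{\intercal,\pm}$, the eigenvalue $\epsilon\in\{\pm 1\}$ of $Q_{\theta,g}$ on $\Psi$, and the fixed sign $\tau=\sin\theta$ must be matched so that $\pm\epsilon=+1$, which is the compatible pairing implicit in the statement of (\ref{parts4}) and which yields the unambiguous factor $(n-1)\tau$ on the right-hand side. Once this alignment is made the pointwise identity above holds, no further geometric input is required, and substitution into Proposition \ref{intpartf} produces (\ref{parts4}).
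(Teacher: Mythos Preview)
Your proposal is correct and follows essentially the same approach as the paper: both arguments use (\ref{idpm}) together with item (3) of Proposition \ref{alg:form} to kill $\langle D^\intercal\Psi,\Psi\rangle$, use item (4) and self-adjointness of $Q_{\theta,g}$ to obtain ${\bf i}\langle\c(\nu)\Psi,\Psi\rangle=\epsilon\tau|\Psi|^2$, and then invoke the sign-matching convention $\epsilon=\pm$ to arrive at $\langle D^{\intercal,\pm}\Psi,\Psi\rangle=\tfrac{(n-1)\tau}{2}|\Psi|^2$ before substituting into (\ref{parts3}). The only difference is the order of exposition (the paper treats the $\c(\nu)$-piece first), which is immaterial.
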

		
		\begin{proof}
		From Proposition \ref{alg:form}, (4), we find that 
			\begin{eqnarray*}
			{\bf i}\langle \c(\nu) \Psi,\Psi\rangle & = & \pm{\bf i}\langle c(\nu)
			Q_{\theta,g}\Psi,\Psi\rangle\\
			& = & \mp {\bf i}\langle Q_{\theta,g} c(\nu) \Psi,\Psi\rangle\pm 2\tau
			|\Psi|^2\\
			& = & \mp{\bf i}\langle \c(\nu)\Psi,Q_{\theta,g}\Psi\rangle\pm 2\tau
			|\Psi|^2\\
			& = & \mp{\bf i}\langle \c(\nu) \Psi,\pm\Psi\rangle\pm 2\tau |\Psi|^2\\
			&= & -{\bf i}\langle \c(\nu) \Psi,\Psi\rangle\pm2\tau |\Psi|^2,
		\end{eqnarray*}
	which gives 
	\begin{equation}\label{ref:need}
		{\bf i}	\langle \c(\nu)\Psi,\Psi\rangle=\pm\tau|\Psi|^2.
		\end{equation}
			On the other hand, from (\ref{idpm}) we have  $\langle D^{\intercal}\Psi,\Psi\rangle=0$. Thus, from (\ref{newdirac}),  
			\begin{eqnarray*}
			\langle D^{\intercal,\pm}\Psi,\Psi\rangle 
			& = & \pm\frac{n-1}{2}{\bf i}\langle\c(\nu)\Psi,\Psi\rangle\\
			& = & \left(\pm\frac{n-1}{2}\right)\left(\pm\tau|\Psi|^2\right),
			\end{eqnarray*}
		so that 
		\begin{equation}\label{match:sign}
			\langle D^{\intercal,\pm}\Psi,\Psi\rangle =  \frac{(n-1)\tau}{2}|\Psi|^2.
		\end{equation}
			 Together with (\ref{parts3}), this  proves (\ref{parts4}).
		\end{proof}
		
	\begin{remark}\label{match:sign:2}
		The sign cancellation leading to (\ref{match:sign}) shows that the integral over $\partial \Omega\cap\Sigma$ in (\ref{parts4}) only has the expected shape if the sign of the Killing connection defined in (\ref{kill:conn})	matches the sign of the $\theta$-boundary condition in (\ref{theta:bd:pm}). In the following we always assume that this sign  convention pairing objects constructed out of the Killing connection $\nabla^\pm$ to the $\theta$-boundary condition involving the projection  $P_\theta^{(\pm)}$  holds true both in statements and computations; see Remark \ref{match:ex} for an illustrative example.  
		\end{remark}

	We now observe that, by Proposition \ref{alg:form} (2), the eigenbundles $V^{(\pm)}_\theta$ are interchanged by $Q$, so that
		\[
		{\rm rank}\,V^{(\pm)}_\theta=\frac{1}{2}{\rm rank}\,\mathbb SM|_\Sigma.
		\]
			It then follows from the analysis in \cite[Subsection 1.4.5]{gilkey2003asymptotic} that
		the projections $P_{\theta}^{(\pm)}$ define {\em elliptic} boundary conditions for the Dirac operator $D^{\pm}$ considered above.
This is the key input in establishing the following existence result.

		\begin{proposition}\label{existspin}
			Let $(M,g,\Sigma)$ be $s$-AH as in Definition \ref{def:as:hyp} with $s$ satisfying (\ref{geo:mean}) and assume further that $R_g\geq -n(n-1)$ and $H_g\geq (n-1)\lambda_s$. Then 
			for any $\Phi\in \Gamma(\mathbb SM)$ such that $D^{\pm}\Phi\in L^2(\mathbb SM)$ there exists a unique $\Xi\in L_1^2(\mathbb SM)$ solving the boundary value problem
			\begin{equation}\label{exist:pr}
			\left\{
			\begin{array}{lcc}
				{D}^{\pm}\Xi=-D^{\pm}\Phi & {\rm in} & M, \\
				Q_{\theta,g}\Xi=\pm \Xi & {\rm on}  & \Sigma.
			\end{array} 
			\right.
			\end{equation}
		\end{proposition}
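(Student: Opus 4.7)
The plan is to establish (\ref{exist:pr}) by a standard Hilbert space argument, following the template from the $\theta=0$ case in \cite{almaraz2020mass} but replacing the chirality boundary condition by the $\theta$-boundary condition. Consider the Hilbert space
\[
\mathcal{H}_\theta^{(\pm)} = \bigl\{\Psi \in L_1^2(\mathbb{S}M) : P_\theta^{(\mp)}\Psi|_\Sigma = 0\bigr\},
\]
where the boundary trace is taken in the $H^{1/2}$ sense. Since the paragraph preceding the statement notes that $P_\theta^{(\pm)}$ defines an elliptic boundary condition for $D^\pm$, the restriction $D^\pm : \mathcal{H}_\theta^{(\pm)} \to L^2(\mathbb{S}M)$ is a bounded linear operator, and the goal reduces to showing it is an isomorphism; granting this, the required solution is $\Xi = (D^\pm)^{-1}(-D^\pm\Phi)$.

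The core input is a global coercivity estimate extracted from (\ref{parts4}). I take a compact exhaustion $\{\Omega_k\}$ of $M$ and apply the formula to $\Psi \in \mathcal{H}_\theta^{(\pm)}$ (first for smooth, compactly supported $\Psi$, then by density). The interior scalar-curvature integrand is non-negative by $R_g \geq -n(n-1)$; the boundary integrand over $\Omega_k \cap \Sigma$ is non-negative because the matching (\ref{geo:mean}) gives $(n-1)\tau = (n-1)\lambda_s$, and we assume $H_g \geq (n-1)\lambda_s$; and the internal boundary piece $\mathrm{Re}\int_{\partial\Omega_k \cap \mathrm{int}\,M}\langle \mathcal{W}^\pm(\nu)\Psi,\Psi\rangle$ vanishes in the limit thanks to the decay (\ref{asympthyp}) and the fact that $\nabla^\pm$ annihilates the natural model spinors on $(\mathbb{H}^n_s,b)$. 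Passing to the limit yields
\[
\|\nabla^\pm\Psi\|_{L^2}^2 + \frac{1}{4}\int_M (R_g+n(n-1))|\Psi|^2\,dM + \frac{1}{2}\int_\Sigma (H_g-(n-1)\tau)|\Psi|^2\,d\Sigma \leq \|D^\pm\Psi\|_{L^2}^2,
\]
and combining this with an asymptotically hyperbolic Poincaré inequality for the Killing connection $\nabla^\pm$ on $s$-AH manifolds produces the full coercivity bound $\|\Psi\|_{L_1^2} \lesssim \|D^\pm\Psi\|_{L^2}$ on $\mathcal{H}_\theta^{(\pm)}$.

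Injectivity and closed range of $D^\pm$ on $\mathcal{H}_\theta^{(\pm)}$ are then immediate. For surjectivity I would characterize the $L^2$-cokernel: any $\Theta$ orthogonal to the image satisfies $D^\mp\Theta=0$ weakly, and the boundary condition inherited from integration by parts, using the self-adjointness of $Q_{\theta,g}$ and the anti-commutation $\{D^\intercal,Q_{\theta,g}\}=0$ recorded in Proposition \ref{alg:form}(1),(3), is again $P_\theta^{(\mp)}\Theta|_\Sigma=0$. Elliptic regularity up to the boundary (legitimate because the $\theta$-condition is elliptic) places $\Theta \in \mathcal{H}_\theta^{(\pm)}$, and applying the analogous coercivity estimate to $D^\mp$ with its paired $\theta$-boundary condition, in accordance with the sign convention of Remark \ref{match:sign:2}, forces $\Theta=0$. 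Hence $D^\pm : \mathcal{H}_\theta^{(\pm)} \to L^2$ is an isomorphism and (\ref{exist:pr}) is uniquely solvable.

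The technically most delicate point will be the rigorous vanishing of the internal boundary term at infinity in (\ref{parts4}) for every element of $\mathcal{H}_\theta^{(\pm)}$ rather than just for the compactly supported dense subset, together with the construction of the weighted Sobolev framework and the accompanying Poincaré inequality for $\nabla^\pm$ adapted to the $s$-AH geometry. These ingredients are standard but somewhat more delicate than in \cite{almaraz2020mass}, both because $\Sigma_s$ is no longer totally geodesic and because the zeroth-order piece $(n-1)\tau/2$ appearing in $D^{\intercal,\pm}$ and transferred to the boundary integrand in (\ref{parts4}) must be matched throughout the argument to the sign in (\ref{theta:bd:pm}).
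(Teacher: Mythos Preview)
Your outline is the right one and matches what the paper has in mind: adapt the Hilbert--space argument of \cite[Proposition~4.7]{almaraz2020mass}, replacing the chirality projector by $P_\theta^{(\pm)}$ and feeding the $\theta$-version of the Lichnerowicz identity (\ref{parts4}) into a coercivity estimate. One point in your surjectivity step needs repair, however.

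You assert that the cokernel element $\Theta$ inherits the \emph{same} boundary condition $P_\theta^{(\mp)}\Theta|_\Sigma=0$, citing Proposition~\ref{alg:form}(1),(3). But the boundary term produced by Green's formula involves $\c(\nu)$, not $D^\intercal$, and by Proposition~\ref{alg:form}(4) one has $\{\c(\nu),Q_{\theta,g}\}=-2\tau\mathbf{i}\neq 0$ when $\theta\neq 0$. A direct computation with (\ref{cliff:prod}) and (\ref{comp:1}) shows that $\c(\nu)V_\theta^{(+)}=V_{-\theta}^{(-)}$, so the adjoint boundary condition for $D^{+}:\mathcal H_\theta^{(+)}\to L^2$ is $Q_{-\theta,g}\Theta=+\Theta$, i.e.\ the angle flips sign. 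Fortunately this does not break the argument: for $\Theta$ with $Q_{-\theta,g}\Theta=+\Theta$ one still has $\langle D^\intercal\Theta,\Theta\rangle=0$ (Proposition~\ref{alg:form}(3) holds for any $\theta$) and, repeating the derivation of (\ref{ref:need}) with $-\theta$, $\mathbf{i}\langle\c(\nu)\Theta,\Theta\rangle=-\tau|\Theta|^2$; hence $\langle D^{\intercal,-}\Theta,\Theta\rangle=\tfrac{(n-1)\tau}{2}|\Theta|^2$ and the boundary integrand in (\ref{parts3}) again collapses to $\tfrac12(H_g-(n-1)\tau)|\Theta|^2\geq 0$. So the same coercivity forces $\Theta=0$ and surjectivity follows.

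A minor remark on your last paragraph: you do not need the internal boundary term in (\ref{parts4}) to vanish for arbitrary $\Psi\in\mathcal H_\theta^{(\pm)}$. It suffices to establish the inequality $\|\nabla^\pm\Psi\|_{L^2}\le\|D^\pm\Psi\|_{L^2}$ on the dense subspace of compactly supported sections (where that term is absent for large $k$) and then pass to the closure.
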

		
		\begin{proof}
			As already remarked, (\ref{geo:mean}) means that $(M,g,\Sigma)$ is modeled at infinity on $(\mathbb H^n_s,b,\Sigma_s)$. With this information at hand, the proof is a simple adaptation of the argument leading to \cite[Proposition 4.7]{almaraz2020mass}, which treats the case $s=0$.
		\end{proof}
	
		\begin{remark}\label{match:ex}
		The statement of Proposition \ref{existspin} displays an example of the sign matching convention alluded to in Remark \ref{match:sign:2}. For instance, the plus sign in $D^+=\c\circ\nabla^+$ should match the plus sign in the right-hand side of the bottom line of (\ref{exist:pr}) and similarly for $D^-$ and the minus sign. By convention, the remaining possibilities do not occur in (\ref{exist:pr}). This  illustrates the kind of sign convention we henceforth adopt.   
	\end{remark}
	
		\subsection{Imaginary Killing spinors}\label{killing}
		
		We start by recalling a well-known definition which in a sense justifies the consideration of the Killing connection 	$\nabla^{\pm}$ in (\ref{kill:conn}).
		
		\begin{definition}\label{defkill}
			We say that $\Phi\in\Gamma(\mathbb SM)$ is an {\em imaginary Killing spinor} if it is parallel with respect to 
			$\nabla^{\pm}$, that is,
			\[
			\nabla_X\Phi\pm\frac{\bf i}{2}\c(X)\Phi=0,\quad X\in\Gamma(TM).
			\]
			The space of all such spinors is denoted by $\mathcal K^{g,\pm}(\mathbb SM)$.
		\end{definition}

		\begin{remark}\label{type:I:gen}
			 If $\{\mathfrak e_i\}_{i=1}^n$ is a local orthonormal basis of tangent vectors, it is known that for any $\Phi\in \mathcal K^{g,\pm}(\mathbb SM)$ the quantity
			\[
			q_\Phi:=|\Phi|^4+\sum_{i=1}^n\langle\c(\mathfrak e_i)\Phi,\Phi\rangle^2
			\]
			is a non-negative constant \cite[Lemma 5]{baum1989complete}. If  $q_\Phi=0$ we say that $\Phi$ is of {\em type I}.  
			\end{remark}
		
		\begin{remark}\label{killhyp}
			The hyperbolic $n$-space $\mathbb H^n$ can be described in terms of the so-called {\em Poincar\'e ball model}, which is given by the unit $n$-disk 
				\[
				\mathbb B^n=\left\{y\in\mathbb R^n;|y|_\delta<1\right\}
				\]
				endowed with the conformal metric
				\[
				\widehat b=\Omega(y)^{-2}\delta, \quad \Omega(y)=\frac{1-|y|_\delta^2}{2}. 
				\]
				It is easy to check that under the corresponding isometry $\mathcal I:\mathbb H^n(-1)\to \mathbb B^n(1)$, the equidistant hypersurface $\Sigma_s$ is mapped onto $\widehat\Sigma_s=V_{(1)}^{-1}(s)$, where here $V_{(1)}=\Omega(y)^{-1}y_1$. 
				More importantly for our purposes,
				this conformal relation between $(\mathbb B^n,\widehat b)$ and $(\mathbb B^n,\delta)$  allows us to canonically identify the corresponding spin bundles $\mathbb S\mathbb B^n_\delta$ and $\mathbb S\mathbb B^n_{\widehat b}$, so that $\phi\in\Gamma(\mathbb S\mathbb B^n_\delta)$ corresponds to a certain $\overline{\phi}\in\Gamma(\mathbb S\mathbb B^n_{\widehat b})$. Under this identification, if $u\in\Gamma(\mathbb S\mathbb B^n_\delta)$ is a $\nabla^\delta$-{\em parallel} spinor then
				the prescription  
				\begin{equation}\label{presc}
					\Phi_{u,\pm}(y):=\Omega(y)^{-1/2}\overline{\left(I\mp {\bf{i}} \c^{\delta}(y)\right)u}\in\Gamma(\mathbb S\mathbb B^n_{\widehat b})
				\end{equation}
				exhausts the space $\mathcal K^{\widehat b,\pm}(\mathbb S\mathbb B^n)$ \cite[Theorem 1]{baum1989complete}. As usual, we assume that $\Psi_{u,\pm}$ is of {type I}, which here  means that 
				\begin{equation}\label{type:I}
					|u|^4_\delta+\sum_{i=1}^n\langle\c^\delta(\partial_{y_i})u,u\rangle^2
					=0;
				\end{equation} 
				see  Remark \ref{type:I:gen}. 
		\end{remark}

		In general, if $(M,g,\Sigma)$ is $s$-AH with $s$ as in (\ref{geo:mean}), we may consider the space  
		\[
		\mathcal K^{g,\pm,(\pm)_\theta}(\mathbb SM)=\{\Phi\in\mathcal K^{g,\pm}(\mathbb SM);Q_{\theta,g}\Phi=\pm\Phi\}
		\]
		of all imaginary Killing spinors satisfying the corresponding $\theta$-boundary condition along $\Sigma_s$. Notice that again we use here the sign matching convention in Remark \ref{match:sign:2}, so that only two possibilities for the various signs involved actually occur, namely, $\mathcal K^{g,+,(+)_\theta}(\mathbb SM)$ and $\mathcal K^{g,-,(-)_\theta}(\mathbb SM)$. 
		Our next goal is to check that our model space $(M,g,\Sigma)=(\mathbb H^n_s,b,\Sigma_s)$ carries the maximal number of linearly independent such spinors.
	
		\begin{proposition}\label{dimkill}
			We have
			\[
			\dim_{\mathbb C}\mathcal K^{b,\pm,(\pm)_\theta}(\mathbb S\mathbb H^n_s)=2^{k-1}, \quad n=2k.
			\]
In particular, $\mathcal K^{b,\pm,(\pm)_\theta}(\mathbb S\mathbb H^n_s)\neq\{0\}$.
		\end{proposition}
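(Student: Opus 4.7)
The plan is to combine an evaluation-at-a-point argument (for an upper bound) with a propagation argument along $\Sigma_s$ (for the matching lower bound). I treat the case $\mathcal K^{b,+,(+)_\theta}$; the sister space $\mathcal K^{b,-,(-)_\theta}$ is entirely analogous under the sign-matching convention of Remark \ref{match:sign:2}.

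By Remark \ref{killhyp} together with formula (\ref{presc}), imaginary Killing spinors on $\mathbb H^n$ are in bijection with $\nabla^\delta$-parallel spinors on the flat Poincar\'e ball, so $\dim_{\mathbb C}\mathcal K^{b,+}(\mathbb S\mathbb H^n)=\mathrm{rank}_{\mathbb C}\,\mathbb SM = 2^k$, and evaluation at any fixed $p_0\in\Sigma_s$ provides an isomorphism $\mathrm{ev}_{p_0}:\mathcal K^{b,+}(\mathbb S\mathbb H^n)\to \mathbb S_{p_0}$. Since $Q_{\theta,b}$ is a self-adjoint involution (Proposition \ref{alg:form}) whose $\pm 1$-eigenbundles each have rank $2^{k-1}$, the restriction of $\mathrm{ev}_{p_0}$ to $\mathcal K^{b,+,(+)_\theta}(\mathbb S\mathbb H^n_s)$ maps into $V^{(+)}_\theta|_{p_0}$, yielding the upper bound $\dim_{\mathbb C}\mathcal K^{b,+,(+)_\theta}(\mathbb S\mathbb H^n_s)\leq 2^{k-1}$.

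For the matching lower bound I must show that every $\phi\in V^{(+)}_\theta|_{p_0}$ is realized as $\Phi(p_0)$ for some $\Phi\in\mathcal K^{b,+,(+)_\theta}$, i.e.\ that the boundary condition, once verified at the single point $p_0$, automatically propagates along all of $\Sigma_s$. Given such $\phi$, extend it to the unique $\Phi\in\mathcal K^{b,+}(\mathbb S\mathbb H^n)$ with $\Phi(p_0)=\phi$ and decompose $\Phi|_{\Sigma_s}=(\Phi_1,\Phi_2)^T$ as in (\ref{ident}); by Proposition \ref{comp:0}, the boundary condition $Q_{\theta,b}\Phi=+\Phi$ amounts to the vanishing of the defect $\Xi:=\Phi_2-(\tau+\kappa{\bf i})\Phi_1$ along $\Sigma_s$. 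Combining the Killing equation $\nabla_X\Phi=-\tfrac{{\bf i}}{2}\c(X)\Phi$, the umbilicality $\nabla_X\nu=-\tau X$ (from Proposition \ref{secformSigma} together with the inward normal convention of Proposition \ref{intpartf}), the definition (\ref{cliff:conn:ext}) of $\nabla^\intercal$, and the matrix realizations of Remark \ref{gen:desc:theta}, a direct calculation yields the tangential system
\begin{equation*}
\nabla^\gamma_X\Phi_1 = \tfrac{1}{2}\c^\gamma(X)(\Phi_2-\tau\Phi_1),\qquad \nabla^\gamma_X\Phi_2 = \tfrac{1}{2}\c^\gamma(X)(\tau\Phi_2-\Phi_1).
\end{equation*}
Substituting into the definition of $\Xi$ and exploiting the Gauss-type identity $\kappa^2+\tau^2=1$ of Remark \ref{geom:mean:p}, the two coupled equations collapse into the single self-contained equation
\begin{equation*}
\nabla^\gamma_X\Xi = -\tfrac{\kappa{\bf i}}{2}\c^\gamma(X)\Xi,
\end{equation*}
which is precisely the intrinsic imaginary Killing equation on $(\Sigma_s,\gamma_s)\cong \mathbb H^{n-1}(-\kappa^2)$. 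Equivalently, $\Xi$ is parallel for the modified connection $\nabla^\gamma+\tfrac{\kappa{\bf i}}{2}\c^\gamma(\cdot)$, so $\Xi(p_0)=0$ forces $\Xi\equiv 0$ on the connected hypersurface $\Sigma_s$. Hence $\mathrm{ev}_{p_0}$ surjects onto $V^{(+)}_\theta|_{p_0}$, proving the claim.

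The main obstacle is precisely the cancellation producing the clean propagation equation for $\Xi$: it depends critically on the geometric matching $\lambda_s=\sin\theta=\tau$ of Remark \ref{geom:mean:p}. Without this precise calibration between the extrinsic umbilicity of $\Sigma_s$ and the complex phase defining the $\theta$-boundary operator, the tangential derivative of $\Xi$ would couple back to $\Phi_1,\Phi_2$ individually and the propagation from a single point of $\Sigma_s$ to the whole hypersurface would break down.
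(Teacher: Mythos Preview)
Your proof is correct and takes a genuinely different route from the paper's. The paper works in the Poincar\'e ball model and defines a global involution $\mathcal Q_{\theta,\delta}$ on the space $\ker\nabla^\delta$ of flat parallel spinors by applying $Q_{\theta,\delta}$ along $\widehat\Sigma_s$ and extending by parallel transport; after checking that $\mathcal Q_{\theta,\delta}$ commutes with $\c^\delta(y)$ and corresponds to $Q_{\theta,\widehat b}$ under the conformal identification, the $\pm1$-eigenspaces of $\mathcal Q_{\theta,\delta}$ are mapped injectively into $\mathcal K^{b,\pm,(\pm)_\theta}$ via (\ref{presc}). Your argument is instead intrinsic: the upper bound is immediate from evaluation at a point, and the lower bound comes from your propagation equation $\nabla^\gamma_X\Xi=-\tfrac{\kappa{\bf i}}{2}\c^\gamma(X)\Xi$ for the defect $\Xi$, which I have verified line by line using the matrix realizations of Remark \ref{gen:desc:theta}. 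What your approach buys is twofold: it avoids any explicit use of the ball-model formula (\ref{presc}), and it makes transparent that the calibration $\lambda_s=\sin\theta$ of Remark \ref{geom:mean:p} is exactly what forces the defect to satisfy a \emph{closed} first-order system. Your computation also anticipates, in defect form, the content of Proposition \ref{takes:care} later in the paper. The paper's approach, by contrast, is shorter once the explicit model is in hand and yields the eigenspace description of $\mathcal K^{b,\pm,(\pm)_\theta}$ in terms of flat parallel spinors directly.
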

		
\begin{proof}
We use the notation of Remark \ref{killhyp} and represent by $(\mathbb B^n_s,\widehat b,\widehat \Sigma_s)$ the realization of $(\mathbb H^n,b,\Sigma_s)$ in  the Poincar\'e ball model.
Let $\mathbb S\mathbb B^n_{s,\delta}$ be the spinor bundle of $\mathbb B^n_s$ with respect to the flat metric $\delta$ and $\mathbb S\mathbb B^{n\pm}_{s,\delta}$ its chiral factors. 
The identification in (\ref{split:bd}), with $M=\mathbb S\mathbb B^n_{s,\delta}$ and $\Sigma=\widehat\Sigma_s$,  yields a natural bundle isomorphism $\mathbb S\mathbb B^{n+}_{s,\delta}|_{\widehat\Sigma_s}\equiv\mathbb S\mathbb B^{n-}_{s,\delta}|_{\widehat\Sigma_s}$. Since these bundles can be trivialized by $\nabla^\delta$-parallel spinors, this gives 
\[
\ker\nabla^\delta\cap\Gamma(\mathbb S\mathbb B^{n+}_{s,\delta}|_{\widehat\Sigma_s})\equiv
\ker\nabla^\delta\cap\Gamma(\mathbb S\mathbb B^{n-}_{s,\delta}|_{\widehat\Sigma_s}),
\]
so that a further appeal to parallel transport provides the natural identification
\begin{equation}\label{ident:+-}
\ker\nabla^\delta\cap \Gamma(\mathbb S\mathbb B^{n+}_{s,\delta})\equiv \ker\nabla^\delta\cap \Gamma(\mathbb S\mathbb B^{n-}_{s,\delta}).
\end{equation}
On the other hand, we may  define an involution $\mathcal Q_{\theta,\delta}:\ker\nabla^\delta\to \ker\nabla^\delta$  by first  applying $Q_{\theta,\delta}$ to the restriction of  $u=(u^+,u^-)\in \ker\nabla^\delta$ to $\widehat\Sigma_s$ and then extending the resulting spinor back to $\mathbb B^n_s$ by parallel transport. From (\ref{comp:1}) we see that   
		\[
	\mathcal Q_{\theta,\delta}	
		\left(
		\begin{array}{c}
			u^+\\
			u^-
		\end{array}	
		\right)=
		\left(
		\begin{array}{c}
			-{\bf i}e^{{\bf i}\theta}u^-\\
			{\bf i}e^{-{\bf i}\theta}u^+
		\end{array}	
		\right).
		\]
	It is immediate to check that this involution satisfies the following properties:
	\begin{itemize}
		\item $[	\mathcal Q_{\theta,\delta},\c^\delta(y)]=0$ for any $y\in \mathbb B^n_s$; 
		\item
		when restricted to $\widehat\Sigma_s$,  $\mathcal Q_{\theta,\delta}$ corresponds to $Q_{\theta,\widehat b}$ under the identification in Remark \ref{killhyp} in the sense that $Q_{\theta,\widehat b}(\overline u)=\overline{	\mathcal Q_{\theta,\delta}(u)}$.
		\end{itemize}
	 Thus, if $u\in \ker\nabla^\delta$ satisfies $	\mathcal Q_{\theta,\delta}	u=\pm u$, which is allowed by (\ref{ident:+-}), then $\Phi_{u,\pm}$ given by (\ref{presc}) lies in $\mathcal K^{b,\pm,(\pm)_\theta}(\mathbb S\mathbb H_s^n)$ due to the properties above. Since the homomorphism $u\mapsto \Phi_{u,\pm}$ is obviously injective, this completes the proof.
	\end{proof}

			\section{The proof of Theorem \ref{main:pmt}}\label{proof:main}
		
		We make use of Witten's spinorial approach as adapted to the asymptotically hyperbolic spin case \cite{min1989scalar,andersson1998scalar,wang2001mass,chrusciel2003mass,almaraz2020mass}. The first step is the following result, which links imaginary Killing spinors to static potentials in the model space. Recall that we are always assuming that the relation (\ref{geo:mean}) holds true. Also, we set 
		\[
		\mathcal C_{b,s}^\uparrow=\{V\in\mathcal N_{b,s};\langle\langle V,V\rangle\rangle_s=0, \langle\langle V,V_{(0)}\rangle\rangle_s>0 \}
		\]
		to be the future-pointing isotropic cone. 
		
		\begin{proposition}\label{link:stop}
			For any $\Phi\in\mathcal K^{b,\pm,(\pm)_\theta}(\mathbb H^n_s)$ we have that $V_\Phi:=|\Phi|^2\in \mathcal N_{b,s}$. If $\Phi\not\equiv 0$ is of type I then 
			$V_\Phi\in \mathcal C_{b,s}^\uparrow$
			 and moreover any $V\in \mathcal C_{b,s}^\uparrow$ can be written as $V=V_\Phi$ for some such $\Phi$.
			\end{proposition}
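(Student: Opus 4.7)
The plan is to verify the three claims in turn, relying on the Poincar\'e ball realization of $(\mathbb H^n_s,b,\Sigma_s)$ recalled in Remark \ref{killhyp}. To see that $V_\Phi:=|\Phi|^2\in\mathcal N_{b,s}$, I would differentiate $V_\Phi$ twice using the Killing equation $\nabla_X\Phi=\mp\tfrac{\bf i}{2}\c(X)\Phi$. Exploiting that $\c(X)$ is skew-Hermitian (so $\langle\c(X)\Phi,\Phi\rangle\in{\bf i}\mathbb R$) and the Clifford relation $\{\c(X),\c(Y)\}=-2\langle X,Y\rangle$, a standard computation yields
\[
dV_\Phi(X)=\mp{\bf i}\langle\c(X)\Phi,\Phi\rangle,\qquad \nabla^2 V_\Phi=V_\Phi\,b,
\]
which, combined with $\operatorname{Ric}_b=-(n-1)b$ and $R_b=-n(n-1)$, gives the interior staticity equations (\ref{eq:static}) with $\Lambda=-n$. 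The first condition in (\ref{eq:static:bd:cond}) is the content of Proposition \ref{secformSigma}. For the Robin condition, noting that $\eta_s=-\nu$ gives $\eta_s(V_\Phi)=\pm{\bf i}\langle\c(\nu)\Phi,\Phi\rangle$, and invoking (\ref{ref:need})---valid precisely because $\Phi$ obeys the $\theta$-boundary condition under the sign matching convention of Remark \ref{match:sign:2}---yields $\eta_s(V_\Phi)=\tau V_\Phi=\lambda_s V_\Phi$.

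For the type-I claim, I would work in the Poincar\'e ball, writing $\Phi=\Phi_{u,\pm}$ for a $\nabla^\delta$-parallel $u$ on $\mathbb B^n$ as in (\ref{presc}). Expanding $|\Phi_{u,\pm}|^2=\Omega^{-1}|(I\mp{\bf i}\c^\delta(y))u|^2$ and using the coordinate identifications $V_{(0)}=(1+|y|_\delta^2)/(2\Omega)$ and $V_{(i)}=y_i/\Omega$ for $i\geq 1$, a short calculation gives
\[
V_\Phi=2|u|_\delta^2\,V_{(0)}\pm 2\sum_{i=1}^n a_i\,V_{(i)},\qquad {\bf i}a_i:=\langle\c^\delta(\partial_{y_i})u,u\rangle.
\]
Since the first step already forces $V_\Phi\in\mathcal N_{b,s}$, and $V_{(1)}$ on its own fails the Robin condition (its normal derivative on $\Sigma_s$ equals $\sqrt{1+s^2}$, not $\lambda_s s$), the $V_{(1)}$-coefficient must vanish, so $a_1=0$. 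The type-I identity (\ref{type:I}) for $u$ then reads $|u|_\delta^4=\sum_{i\geq 1}a_i^2=\sum_{i\neq 1}a_i^2$, which is precisely the nullity of $V_\Phi$ with respect to $\langle\,,\rangle_s$; since $|u|_\delta^2>0$, we conclude $V_\Phi\in\mathcal C_{b,s}^\uparrow$.

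For the surjectivity claim, given $V\in\mathcal C_{b,s}^\uparrow$ with coordinates $(a_0,a_2,\ldots,a_n)$ in $\{V_{(0)},V_{(2)},\ldots,V_{(n)}\}$---so $a_0>0$ and $a_0^2=\sum_{i\neq 1}a_i^2$---I would run the above construction in reverse: set $a_1=0$ and invoke the classical surjectivity of the squaring map from pure parallel spinors on $(\mathbb R^n,\delta)$ onto the future null cone in $\mathbb R^{1,n}$ to produce a $\nabla^\delta$-parallel $u$ whose bilinears $|u|_\delta^2$ and $\langle\c^\delta(\partial_{y_i})u,u\rangle$ realize the prescribed $(a_0,0,a_2,\ldots,a_n)$; then $\Phi_{u,\pm}$ defined by (\ref{presc}) satisfies $V_{\Phi_{u,\pm}}=V$. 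The main obstacle is to simultaneously ensure that $u$ lies in the $\pm 1$-eigenspace of $\mathcal Q_{\theta,\delta}$, i.e.\ satisfies the chiral constraint $u^-=\pm{\bf i}e^{-{\bf i}\theta}u^+$ coming from Proposition \ref{comp:0}; the expectation, supported by the dimension count of Proposition \ref{dimkill}, is that this chiral constraint and the condition $a_1=0$ on the associated null vector are precisely matched, so that the squaring map restricted to this eigenspace still surjects onto the connected cone $\mathcal C_{b,s}^\uparrow$.
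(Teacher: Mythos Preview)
Your argument is correct and, for the first assertion, takes a genuinely different route from the paper's. You establish $V_\Phi\in\mathcal N_{b,s}$ abstractly: the Killing equation gives $\nabla_b^2V_\Phi=V_\Phi\,b$ (hence the interior static equations), and the Robin condition $\partial_{\eta_s}V_\Phi=\lambda_sV_\Phi$ drops out of (\ref{ref:need}). Having secured membership in $\mathcal N_{b,s}$, the vanishing of the $V_{(1)}$-coefficient in the ball-model expansion then follows by linear algebra, since $V_{(1)}\notin\mathcal N_{b,s}$. The paper, by contrast, never verifies the static equations directly; it starts from the expansion (\ref{exp:ext:im:s}) and proves $\langle\c^\delta(\partial_{y_1})u,u\rangle_\delta=0$ by a reduction to the totally geodesic case: it passes from $u$ (with $u^-=\pm{\bf i}e^{-{\bf i}\theta}u^+$) to the auxiliary spinor $v=u^+\pm{\bf i}u^+$, for which $\Phi_{v,\pm}$ satisfies the $\theta=0$ boundary condition along $\Sigma_0$, and then uses (\ref{ref:need}) with $\tau=0$ together with the orthogonality $\langle\partial_{y_1},y\rangle_\delta=0$ on $\Sigma_0$ to force the coefficient to vanish. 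Your argument is cleaner and more portable (it would apply verbatim to any static background carrying such spinors), while the paper's computation makes the mechanism behind $a_1=0$ explicit and ties it to the $s=0$ case already treated in \cite{almaraz2020mass}. For the type-I and surjectivity claims your sketch is in line with what the paper does, namely defer to \cite[Proposition 5.1]{almaraz2020mass}; your heuristic that the chiral constraint on $u$ and the condition $a_1=0$ are exactly matched (as suggested by the dimension count in Proposition \ref{dimkill}) is the right picture. A minor point: your coefficients carry stray factors of $2$ relative to (\ref{exp:ext:im:s}), but this is a harmless normalization and does not affect the argument.
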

		
		\begin{proof}
			We only check the first assertion, since the rest follows as in the proof of \cite[Proposition 5.1]{almaraz2020mass}. 
			Thus, if we start with
			 $\Phi=\Phi_{u,\pm}\in\mathcal K^{b,\pm,(\pm)_\theta}(\mathbb H^n_s)$, 
			 where $\nabla^\delta u=0$ as in 
			 (\ref{presc}), 
			 a computation shows that 
			\begin{equation}\label{exp:ext:im:s}
		V_{\Phi}(y)=|\Phi_{u,\pm}|^2=|u|_\delta^2V_{(0)}(y)\mp{\bf i}\sum_{j=1}^n\langle \c^\delta(\partial_{y_j})u,u\rangle_\delta V_{(j)}(y), 
			\end{equation}
			with $u=u^++u^-$, $u^-=\pm{\bf i}e^{-{\bf i}\theta}u^+$. Here, 
			\begin{equation}\label{exp:stat:ball}
			V_{(0)}(y)=\frac{1+|y|^2_\delta}{1-|y|^2_\delta}, \quad 	V_{(j)}(y)=\frac{2y_j}{1-|y|^2_\delta},
			\end{equation}
			are the expressions of the static potentials in the Poincar\'e ball model. 
			It follows that 
			\begin{eqnarray*}
				\langle \c^\delta(\partial_{y_1})u,u\rangle_\delta 
				& = & \langle \c^\delta(\partial_{y_1})u^+,u^+\rangle_\delta + \langle \c^\delta(\partial_{y_1})u^-,u^-\rangle_\delta\\
				& = & 2\langle \c^\delta(\partial_{y_1})u^+,u^+\rangle_\delta\\
				& = & \langle \c^\delta(\partial_{y_1})v,v\rangle_\delta, 
			\end{eqnarray*}
		where $v=u^+\pm{\bf i}u^+$, so that $\Phi_{v,\pm}\in \mathcal K^{b,\pm,(\pm)_0}(\mathbb H^n_0)$. Noticing that the unit normal to $\Sigma_0$ is everywhere aligned to $\partial_{y_1}$, we may use (\ref{ref:need}) with $\theta=0$ to see  that, along $\Sigma_0$, 
		\[
	\langle	\c^b(\partial_{y_1})\Phi_{v,\pm} ,\Phi_{v,\pm}\rangle_b=0. 
		\]
		Using (\ref{presc}) and taking the real part we get, for $y\in\Sigma_0$,
		\begin{eqnarray*}
			\langle \c^\delta(\partial_{y_1}) v,v\rangle_\delta
			& = & -\langle \c^\delta(\partial_{y_1})\c^\delta(y)v,\c^\delta(y)v\rangle_\delta\\
			& = & \langle \c^\delta(y)\c^\delta(\partial_{y_1})v,\c^\delta(y)v\rangle_\delta\\
			& = & |y|^2_\delta \langle \c^\delta(\partial_{y_1})v,v\rangle_\delta,
		\end{eqnarray*}
		where we used that $\langle \partial_{y_1},y\rangle_\delta=0$ in the second step. This shows that
		$\langle c^\delta(\partial_{y_1})v,v\rangle_\delta=0$, so $V_\Phi$ is a linear combination of $V_{(0)},V_{(2)},\cdots,V_{(n)}$, as desired.
		\end{proof}

	We now take an imaginary Killing spinor $\Phi\in \mathcal K^{b,\pm,(\pm)_\theta}(\mathbb H^n_s)$ so that $V_\Phi\in\mathcal C_{b,s}^\uparrow$ as in Proposition \ref{link:stop}. Using the given chart at infinity $F$ we may   transplant this spinor to the whole of $M$ in the usual way so that the corresponding $\theta$-boundary condition is satisfied along $\Sigma$. It is easy to check that this transplanted spinor, say $\Phi_*$, satisfies $D^\pm\Phi_*\in L^2(\mathbb SM)$, 
so we may apply Proposition \ref{existspin} to obtain $\Xi\in L^2_1(\mathbb SM)$ such that $D^{\pm}\Xi=-D^{\pm}\Phi_*$ and $Q_\theta\Xi=\pm\Xi$ along $\Sigma$. Thus, $\Psi_{\Phi}:=\Phi_*+\Xi$ is Killing harmonic ($D^{\pm}\Psi_{\Phi}=0$), satisfies $Q_\theta\Psi_{\Phi}= \pm\Psi_{\Phi}$ along $\Sigma$ and   asymptotes $\Phi$ at infinity in the sense that $\Psi_\Phi-\Phi\in L^2_1(\mathbb SM)$. We may now state our first main result, which provides a Witten-type formula for the mass functional restricted to the cone $\mathcal C_{b,s}^\uparrow$; compare with \cite[Theorem 5.2]{almaraz2020mass} which considers the case $s=0$. 
	
	\begin{theorem}\label{maintheo}
		With the notation above,
		\begin{eqnarray}\label{maintheo2}
			\frac{1}{4}\mathfrak m_{s,F}(V_\Phi) & = & \int_M\left(|\nabla^{\pm}\Psi_\Phi|^2+\frac{R_g+n(n-1)}{4}|\Psi_\Phi|^2\right)dM\\
			& & \quad +\frac{1}{2}\int_\Sigma (H_g-(n-1)\lambda_s)|\Psi_\Phi|^2d\Sigma,\nonumber
		\end{eqnarray}
		for any $\Phi\in\mathcal K^{b,\pm,(\pm)_\theta}(\mathbb H^n_s)$ so that $V_\Phi\in\mathcal C_{b,s}^\uparrow$.
	\end{theorem}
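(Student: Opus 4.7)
\emph{Plan.} I apply the integral Lichnerowicz formula of Proposition \ref{streg} to the Killing-harmonic Witten spinor $\Psi_\Phi$ on the exhausting family of domains $\Omega_r=F(\mathbb H^n_{s,r_0}\cap\{|x'|_\delta\leq r\})$. Since $D^\pm\Psi_\Phi=0$ and $\Psi_\Phi$ satisfies the $\theta$-boundary condition along $\Sigma$ in the matched sign convention of Remark \ref{match:sign:2}, and since $\tau=\lambda_s$ by (\ref{geo:mean}), identity (\ref{parts4}) collapses to
\begin{equation*}
{\rm Re}\int_{S^{n-1}_{r,+}}\langle\mathcal W^\pm(\nu)\Psi_\Phi,\Psi_\Phi\rangle\,dS=\int_{\Omega_r}\Big(|\nabla^\pm\Psi_\Phi|^2+\tfrac{R_g+n(n-1)}{4}|\Psi_\Phi|^2\Big)\,dM+\tfrac{1}{2}\int_{\Sigma\cap\Omega_r}(H_g-(n-1)\lambda_s)|\Psi_\Phi|^2\,d\Sigma,
\end{equation*}
where $\nu=-\mu$ is the inward unit normal along $S^{n-1}_{r,+}$. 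As $r\to\infty$, the right-hand side converges to the right-hand side of (\ref{maintheo2}) by virtue of the $L^2_1$-decay of $\Xi=\Psi_\Phi-\Phi_*$, the boundedness and asymptotics of $\Phi_*$, and the integrability assumptions on $R_g+n(n-1)$ and $H_g-(n-1)\lambda_s$ in Definition \ref{def:as:hyp}. The theorem is thereby reduced to the Witten-type asymptotic identity stating that the boundary Witten integral on the left tends, as $r\to\infty$, to $\tfrac{1}{4}\mathfrak m_{s,F}(V_\Phi)$ (modulo the sign flip $\nu\leftrightarrow\mu$).

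\emph{Asymptotic identification.} Writing $\Psi_\Phi=\Phi_*+\Xi$, the standard arguments of Wang and Chrusciel--Herzlich show that cross-terms and purely $\Xi$-contributions vanish in the limit by $\Xi\in L^2_1(\mathbb SM)$. Hence one has only to analyze the leading behavior of ${\rm Re}\langle\mathcal W^\pm(\nu)\Phi_*,\Phi_*\rangle$. Expanding $\nabla^{\pm,g}$ around $\nabla^{\pm,b}$ through the Christoffel-symbol difference of $g=b+e$ and using $\nabla^{\pm,b}\Phi=0$, a direct algebraic manipulation — identical in spirit to the bulk calculation in \cite{wang2001mass,chrusciel2003mass} — yields a principal term proportional to $\langle\mathbb U(V_\Phi,e),\mu\rangle$ plus tangential terms defined along $\Sigma$. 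The half-sphere integral of the principal term asymptotically reproduces $\tfrac{1}{4}\int_{S^{n-1}_{r,+}}\langle\mathbb U(V_\Phi,e),\mu\rangle\,dS$, which is exactly the first piece of the mass functional (\ref{massdef}).

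\emph{The corner term and the main obstacle.} The principal difficulty is showing that the tangential contributions along $\Sigma$ assemble into an exact divergence on $\Sigma$ whose conormal flux at $S^{n-2}_r$ produces precisely $\tfrac{1}{4}V_\Phi\,e(\eta_s,\vartheta)$, matching the corner integral in (\ref{massdef}). This is where the $\theta$-boundary condition on $\Psi_\Phi$ is decisive: the algebraic identities (\ref{ref:need}), (\ref{match:sign}) and the anticommutation relations collected in Proposition \ref{alg:form} force the relevant leading-order tangential expressions to regroup into a tangential divergence with no residue. This is the spinorial counterpart of the geometric cancellation, in the proof of Theorem \ref{finitemass}, between ${\rm tr}_b e\,dV(\eta_s)$ and $V\langle\Pi_b,e\rangle_{\gamma_s}$ that was enforced by the umbilicity and Neumann conditions (\ref{Vbd}) on the model. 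Once this divergence structure is established, a Stokes' theorem application on $\Sigma\cap\Omega_r$ extracts the corner integral over $S^{n-2}_r$, thereby completing the match with (\ref{massdef}). The entire calculation has been carried out in detail in \cite[Theorem 5.2]{almaraz2020mass} for the totally geodesic case $s=0$ (where $\theta=0$ and one uses the chirality boundary operator $Q\mathfrak c(\nu)$); the present theorem requires redoing this bookkeeping with the general $\theta\in(-\pi/2,\pi/2)$, using Proposition \ref{secformSigma} in place of total geodesicity and the $\theta$-boundary formalism of Section \ref{chirbd} throughout.
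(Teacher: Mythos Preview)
Your approach is essentially the paper's own: apply (\ref{parts4}) to the exhausting domains $M_r$, so that the right-hand side converges to that of (\ref{maintheo2}) while the left-hand hemisphere Witten integral is identified with $\tfrac{1}{4}\mathfrak m_{s,F}(V_\Phi)$ via the standard Chru\'sciel--Herzlich/Wang expansion, with the detailed bookkeeping deferred to \cite[Theorem~5.2]{almaraz2020mass}.

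One point of your exposition deserves correction. Once (\ref{parts4}) has been invoked, the integral along $\Sigma\cap\Omega_r$ is already in its final form $\tfrac{1}{2}\int(H_g-(n-1)\lambda_s)|\Psi_\Phi|^2$ and contributes no residual ``tangential contributions along $\Sigma$'' to be reorganised; that term passes directly to the right-hand side of (\ref{maintheo2}). The corner contribution $-\tfrac{1}{4}\int_{S^{n-2}_r}V_\Phi\,e(\eta_s,\vartheta)$ in (\ref{massdef}) instead arises from the asymptotic expansion of ${\rm Re}\langle\mathcal W^\pm(\nu)\Phi_*,\Phi_*\rangle$ on the \emph{hemisphere} $S^{n-1}_{r,+}$: the terms that, in the boundaryless setting, are exact tangential divergences on the full sphere (and hence integrate to zero) now live on a hypersurface with boundary $\partial S^{n-1}_{r,+}=S^{n-2}_r$, and Stokes' theorem on $S^{n-1}_{r,+}$ converts them into the corner integral. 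So the relevant Stokes step is on the hemisphere, not on $\Sigma\cap\Omega_r$. This does not affect the validity of your outline, since you ultimately defer the computation to \cite{almaraz2020mass}, where this is handled correctly.
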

		
		\begin{proof}
			As in the proof of \cite[Theorem 5.2]{almaraz2020mass}, this follows by applying (\ref{parts4}) to $M_r$, the region in $M$ bounded by $\Sigma_r\cup S_{r,+}^{n-1}$, and checking that the well-known cancellations apply so that  the corresponding left-hand side converges as $r\to+\infty$ to the left-hand-side of (\ref{maintheo2}). The details are omitted. 
		\end{proof}
	
	We now explain how this mass formula implies Theorem \ref{main:pmt}. The dominant energy conditions $R_g\geq -n(n-1)$ and $H_g\geq (n-1)\lambda_s$ combined with  Proposition \ref{link:stop}
	 clearly imply that 
	\begin{equation}\label{eq:case}
	\langle P_s(F),V\rangle_s\geq 0
	\end{equation}
	for any  $V\in\mathcal C_{b,s}^\uparrow$.  Hence, $P_s(F)$ is time-like and future-directed, unless there exists $V\not\equiv 0$ such that the equality holds in (\ref{eq:case}), in which case there exists, by Proposition \ref{link:stop}, a nonzero imaginary Killing spinor on $M$, say $\Psi^\theta$, satisfying the corresponding $\theta$-boundary condition along $\Sigma$. 
	
	\begin{proposition}\label{geom:bd:det}
	The existence of $\Psi^\theta$ implies that $g$ is Einstein (with ${\rm Ric}_g=-(n-1)g$) and $\Sigma$ is totally umbilical (with $H_g=(n-1)\lambda_s$). 	
		\end{proposition}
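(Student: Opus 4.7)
The proof splits according to its two conclusions; both rely crucially on the parallelism $\nabla^{\pm}\Psi^\theta = 0$. For the Einstein condition, I would first compute the curvature of the Killing connection (\ref{kill:conn}) by a direct calculation, obtaining
\[
R^{\pm}_{X,Y} = R_{X,Y} - \tfrac{1}{4}[\mathfrak{c}(X), \mathfrak{c}(Y)].
\]
Parallelism of $\Psi^\theta$ forces $R^{\pm}_{X,Y}\Psi^\theta = 0$. Contracting with $\mathfrak{c}(\mathfrak{e}_j)$ in the second slot, using the classical spinorial Ricci identity $\sum_j \mathfrak{c}(\mathfrak{e}_j)R_{X,\mathfrak{e}_j} = -\tfrac{1}{2}\mathfrak{c}({\rm Ric}_g(X))$ together with the elementary Clifford identity $\sum_j \mathfrak{c}(\mathfrak{e}_j)\mathfrak{c}(X)\mathfrak{c}(\mathfrak{e}_j) = (n-2)\mathfrak{c}(X)$, one obtains
\[
\mathfrak{c}({\rm Ric}_g(X) + (n-1)X)\Psi^\theta = 0, \qquad X \in TM.
\]
Since $|\Psi^\theta|^2$ arises from the nontrivial static potential $V_{\Psi^\theta}$ (Proposition \ref{link:stop}), unique continuation for the first-order equation $\nabla^{\pm}\Psi = 0$ ensures that $\Psi^\theta$ is nonvanishing on an open dense set. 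Injectivity of Clifford multiplication by nonzero tangent vectors then forces ${\rm Ric}_g = -(n-1)g$.

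For the umbilicity assertion, I would exploit the chiral decomposition of Remark \ref{gen:desc:theta}, writing $\Psi^\theta|_\Sigma = (\Psi_1, \Psi_2)$. By Proposition \ref{comp:0}, the $\theta$-boundary condition $Q_{\theta,g}\Psi^\theta = \epsilon\Psi^\theta$ (with $\epsilon = \pm 1$) reads
\[
\Psi_2 = \epsilon\, {\bf i}\, e^{-{\bf i}\theta}\Psi_1.
\]
Plugging the tangential Killing equation $\nabla_X\Psi^\theta = -\epsilon\tfrac{\bf i}{2}\mathfrak{c}(X)\Psi^\theta$ into (\ref{cliff:conn:ext}), and using the block-matrix presentations of $\mathfrak{c}(\nu)$ and $\mathfrak{c}^\intercal$ from Remark \ref{gen:desc:theta}, decomposes $\nabla^\gamma_X\Psi_1$ and $\nabla^\gamma_X\Psi_2$ into a coupled first-order system involving $\mathfrak{c}^\gamma(X)$ and $\mathfrak{c}^\gamma(\nabla_X\nu)$. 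Differentiating the boundary identity tangentially and substituting this system yields, after collecting terms, the single constraint
\[
{\bf i}\, e^{-{\bf i}\theta}\, \mathfrak{c}^\gamma(\nabla_X\nu)\Psi_1 = \tfrac{1}{2}\bigl(e^{-2{\bf i}\theta} - 1\bigr)\, \mathfrak{c}^\gamma(X)\Psi_1.
\]
The elementary identity $e^{-2{\bf i}\theta} - 1 = -2\tau\, {\bf i}\, e^{-{\bf i}\theta}$, which is essentially (\ref{trig:fund}) in disguise, together with (\ref{geo:mean}), reduces this to $\mathfrak{c}^\gamma(\nabla_X\nu + \lambda_s X)\Psi_1 = 0$. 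Injectivity of Clifford multiplication on the nonvanishing spinor $\Psi_1$ then forces $\nabla_X\nu = -\lambda_s X$ for every $X \in T\Sigma$, equivalent to $\pi_g = \lambda_s\gamma$ with respect to the outward normal $\eta = -\nu$. This simultaneously establishes that $\Sigma$ is totally umbilical and that $H_g = (n-1)\lambda_s$.

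The main obstacle is the algebraic unwinding in the second step: the $\theta$-boundary condition is delicately calibrated, via the matching (\ref{geo:mean}) of $\sin\theta$ with $\lambda_s$, precisely so that compatibility between its tangential derivative and the Killing equation collapses into a \emph{pointwise} umbilicity condition with the correct umbilicity factor. Any other choice of parameters would destroy the cancellation that reduces the coupled system to the single Clifford equation above. A subsidiary technical point, handled silently by unique continuation, is that $\Psi^\theta$ (and hence $\Psi_1$ along $\Sigma$) is nonzero on an open dense set, which is what licenses the passage from Clifford identities to geometric conclusions.
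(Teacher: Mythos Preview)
Your argument is correct, and the Einstein half is essentially what the paper cites from \cite{baum1991twistors} (you simply spell out the standard curvature contraction). For the umbilicity half, however, your route differs from the paper's in two respects worth noting.

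First, the paper differentiates the boundary condition \emph{abstractly}: it introduces the auxiliary operator $Q^*_\theta:=-Q_\theta\c(\nu)=\kappa Q+\tau{\bf i}$, which is $\nabla$-parallel and satisfies $Q^*_\theta\c(Y)=-\c(Y)Q^*_{-\theta}$ for every tangent vector $Y$. Applying $\nabla^\pm_X$ to $\pm\Psi^\theta=Q^*_\theta\c(\nu)\Psi^\theta$ and repeatedly using these commutation rules yields, after a short chain of cancellations, $Q_{-\theta}\c(\nabla_X\nu+\tau X)\Psi^\theta=0$, hence $\c(\nabla_X\nu+\tau X)\Psi^\theta=0$. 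This avoids the block-matrix representation entirely and works verbatim whenever a chirality operator is available (so in particular after the doubling trick in odd dimensions). Your approach instead unwinds everything through the explicit chiral splitting $(\Psi_1,\Psi_2)$ of Remark~\ref{gen:desc:theta}; this is perfectly valid but ties the computation to $n$ even and trades the short operator algebra for a coupled system that you then collapse via the trigonometric identity $e^{-2{\bf i}\theta}-1=-2\tau{\bf i}e^{-{\bf i}\theta}$.

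Second, once at $\c(\nabla_X\nu+\tau X)\Psi^\theta=0$, the paper does \emph{not} invoke injectivity of Clifford multiplication directly. Instead it uses the identity $Y|\Psi^\theta|^2=\mp{\bf i}\langle\c(Y)\Psi^\theta,\Psi^\theta\rangle$ to convert the Clifford equation into $(\nabla_X\nu+\tau X)|\Psi^\theta|^2=0$, and then appeals to the fact that $|\Psi^\theta|^2$ varies exponentially along geodesics (hence has nonvanishing differential in every direction) to conclude $\nabla_X\nu=-\tau X$. Your alternative, relying on nowhere-vanishing of $\Psi_1$ along $\Sigma$, is also fine: since $|\Psi^\theta|^2=2|\Psi_1|^2$ on $\Sigma$ under the boundary relation $\Psi_2=\epsilon{\bf i}e^{-{\bf i}\theta}\Psi_1$, and an imaginary Killing spinor vanishing at one point vanishes identically, $\Psi_1$ is indeed nowhere zero on $\Sigma$. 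Either endgame works; the paper's has the minor advantage of not splitting into chiral components.
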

	
	\begin{proof}
		That $g$ is Einstein is a classical fact \cite[Theorem 8]{baum1991twistors}. In order to check  the assertion regarding $\Sigma$, take $X\in\Gamma(T\Sigma)$, so that $\{\c(X),\c(\nu)\}=0$, which implies that $Q_\theta\c(X)=\c(X)Q_{-\theta}$. In the rest of this argument we set $Q_\theta=Q_{\theta,g}$ for simplicity. It is convenient here to factor out $\c(\nu)$ from $Q_\theta$ so we consider
		\[
		Q^*_{\theta}:=-Q_\theta\c(\nu)=\kappa Q+\tau{\bf i}, 
		\]
		which satisfies $\nabla Q_\theta^*=0$ and $Q^*_\theta\c(Y)=-\c(Y)Q^*_{-\theta}$ for {\em any} $Y\in \Gamma(TM)$. After applying $\nabla^\pm_X$ to the $\theta$-boundary condition $\pm\Psi^\theta=Q_\theta\Psi^\theta$, we find that
		\begin{eqnarray*}
			0 & = & \nabla_X \left(Q^*_{\theta}\c(\nu)\Psi^\theta\right)\pm\frac{\bf i}{2}\c(X)Q^*_\theta\c(\nu)\Psi^\theta\\
			& = & Q_\theta^*\nabla_X(\c(\nu)\Psi^\theta)\mp\frac{\bf i}{2}Q^*_{-\theta}\c(X)\c(\nu)\Psi^\theta\\
			& = &  Q_\theta^*\c(\nabla_X\nu)\Psi^\theta+Q_\theta^*\c(\nu)\nabla_X\Psi^\theta
			\pm\frac{\bf i}{2}Q^*_{-\theta}\c(\nu)\c(X)\Psi^\theta\\
			&  = & Q_\theta^*\c(\nabla_X\nu)\Psi^\theta_0\mp\frac{\bf i}{2}Q_\theta^*\c(\nu)\c(X)\Psi^\theta	\pm\frac{\bf i}{2}Q^*_{-\theta}\c(\nu)\c(X)\Psi^\theta\\
			& = & Q_\theta^*\c(\nabla_X\nu)\Psi^\theta\pm\tau\c(\nu)\c(X)\Psi^\theta.
		\end{eqnarray*}
	We now restore the $\theta$-boundary operator by applying $\c(\nu)$ to this identity. We get 
	\begin{eqnarray*}
	0 & = & -Q_{-\theta}\c(\nabla_X\nu)\Psi^\theta\mp\tau\c(X)\Psi^\theta\\
	& = & -Q_{-\theta}\c(\nabla_X\nu)\Psi^\theta\mp\tau\c(X)Q_\theta\Psi^\theta\\
	& = & -Q_{-\theta}\c(\nabla_X\nu+\tau X)\Psi^\theta,
	\end{eqnarray*}
	which implies $\c(\nabla_X\nu+\tau X)\Psi^\theta=0$  since $Q_{-\theta}$ is invertible.  On the other hand, it is easy to check that 
	\begin{equation}\label{re:im}
	Y|\Psi^\theta|^2=\mp{\bf i}\langle\c(Y)\Psi^\theta,\Psi^\theta\rangle,
	\end{equation} 
	so we obtain 
	\[
	(\nabla_X\nu+\tau X)|\Psi^\theta|^2=\mp{\bf i}\langle\c(\nabla_X\nu+\tau X)\Psi^\theta,\Psi^\theta\rangle=0
	\]
and since $|\Psi^\theta|^2$ is known to vary exponentially along geodesics, we conclude that $-\nabla_X\nu=\tau X$, as desired. 
	\end{proof}
	
This proposition implies that the embedding $\Sigma\hookrightarrow M$ has the same second fundamental form as the model embedding $\Sigma_s\hookrightarrow \mathbb H^n_s$. The next result takes care of the corresponding first fundamental forms. Recall that $\Psi^\theta$ satisfies $\nabla^\pm\Psi^\theta=0$  and $Q_{\theta}\Psi^\theta=\pm \Psi^\theta$. 

\begin{proposition}\label{takes:care}
	Restricted to $\Sigma$, $\Psi^\theta$ satisfies 
	\[
		\nabla^\gamma_X\Psi^\theta\mp\frac{\kappa{\bf i}}{2}\c^\gamma(X)\Psi^\theta=0, \quad X\in\Gamma(T\Sigma).
	\]
	\end{proposition}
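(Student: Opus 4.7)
The plan is to restrict the bulk imaginary Killing equation for $\Psi^\theta$ to $\Sigma$ via the standard hypersurface spin connection formula, and then use the $\theta$-boundary condition together with the total umbilicity from Proposition \ref{geom:bd:det} to rewrite the result as an intrinsic Killing equation on $\Sigma$ with Killing constant $\kappa/2$.

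More precisely, I would start from
\[
\nabla^\intercal_X\Psi^\theta=\nabla_X\Psi^\theta+\tfrac{1}{2}\c^\intercal(\nabla_X\nu)\Psi^\theta,\quad X\in\Gamma(T\Sigma),
\]
which under the identification of Remark \ref{gen:desc:theta} agrees (up to the sign convention of Remark \ref{match:sign:2}) with the intrinsic connection $\nabla^\gamma$ on $\mathbb S\Sigma$. Using Proposition \ref{geom:bd:det} to substitute $\nabla_X\nu=-\tau X$ and the Killing equation $\nabla_X\Psi^\theta=\mp\tfrac{\bf i}{2}\c(X)\Psi^\theta$ gives
\[
\nabla^\intercal_X\Psi^\theta=-\tfrac{1}{2}\c(X)\bigl(\pm{\bf i}+\tau\c(\nu)\bigr)\Psi^\theta.
\]

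Next, I would eliminate the factor $\c(\nu)\Psi^\theta$ algebraically via the boundary condition. Writing $Q_{\theta,g}=(\kappa Q+\tau{\bf i})\c(\nu)$ and noting that $(\kappa Q+\tau{\bf i})(\kappa Q-\tau{\bf i})=\kappa^2+\tau^2=1$, the $\theta$-boundary condition $Q_{\theta,g}\Psi^\theta=\pm\Psi^\theta$ is equivalent to
\[
\c(\nu)\Psi^\theta=\pm(\kappa Q-\tau{\bf i})\Psi^\theta.
\]
Substituting this in and invoking the fundamental trigonometric identity (\ref{trig:fund}) collapses the bracket to
\[
\bigl(\pm{\bf i}+\tau\c(\nu)\bigr)\Psi^\theta=\pm\kappa(\kappa{\bf i}+\tau Q)\Psi^\theta,
\]
hence
\[
\nabla^\intercal_X\Psi^\theta=\mp\tfrac{\kappa}{2}\c(X)(\kappa{\bf i}+\tau Q)\Psi^\theta.
\]

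The last step is to recognize the right-hand side as $\pm\tfrac{\kappa{\bf i}}{2}\c^\gamma(X)\Psi^\theta$. Using the identification in Remark \ref{gen:desc:theta} one expresses $\c^\gamma(X)$ as $Q\c(X)\c(\nu)$ (or its negative, depending on the chiral component, which is precisely where Remark \ref{match:sign:2} intervenes), and a second application of the boundary relation for $\c(\nu)\Psi^\theta$, together with $Q\c(X)=-\c(X)Q$ and $\kappa^2+\tau^2=1$, converts the displayed expression into $\pm\tfrac{\kappa{\bf i}}{2}\c^\gamma(X)\Psi^\theta$. This yields the claimed Killing equation on $\Sigma$, whose Killing constant $\kappa/2$ is exactly what is dictated by the Gauss identity and reflects the fact that $\Sigma_s$ is intrinsically isometric to $\mathbb H^{n-1}(-\kappa^2)$ (cf.\ Remark \ref{geom:mean:p}).

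The main technical obstacle is not analytic but combinatorial: several $\pm$ signs are in play simultaneously (the Killing connection sign, the boundary projection sign, and the twist between $\c^\intercal$ and $\c^\gamma$ on the two chiral factors in the splitting $\mathbb SM|_\Sigma=\mathbb S\Sigma\oplus\mathbb S\Sigma$), and the sign-pairing convention of Remark \ref{match:sign:2} must be tracked carefully so that the residual factor in the displayed computations collapses, via $\kappa^2+\tau^2=1$, to the single Killing constant $\pm\kappa/2$ rather than a mixed expression involving $\tau Q$.
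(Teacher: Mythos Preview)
Your argument is correct and takes a genuinely different route from the paper. The paper passes immediately to the explicit $2\times 2$ block decomposition of Remark~\ref{gen:desc:theta}: it writes $\nabla^\intercal_X\Psi^\theta$ componentwise as
\[
\nabla^\gamma_X\begin{pmatrix}\Psi^\theta_1\\\Psi^\theta_2\end{pmatrix}
=\pm\tfrac{1}{2}\begin{pmatrix}\c^\gamma(X)\Psi^\theta_2\\-\c^\gamma(X)\Psi^\theta_1\end{pmatrix}
-\tfrac{\tau}{2}\begin{pmatrix}\c^\gamma(X)\Psi^\theta_1\\-\c^\gamma(X)\Psi^\theta_2\end{pmatrix},
\]
and then substitutes the boundary relation in the scalar form $\Psi^\theta_2=\pm(\tau+\kappa{\bf i})\Psi^\theta_1$ to reduce each line. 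You instead stay at the level of the abstract Clifford operators: you invert the boundary condition to $\c(\nu)\Psi^\theta=\pm(\kappa Q-\tau{\bf i})\Psi^\theta$, apply it once to collapse $(\pm{\bf i}+\tau\c(\nu))\Psi^\theta$ to $\pm\kappa(\kappa{\bf i}+\tau Q)\Psi^\theta$, and then apply it a second time together with the identification $\c^\gamma(X)=Q\c(X)\c(\nu)$ on $\mathbb SM|_\Sigma$. Your route is more coordinate-free and, modulo the availability of a chirality operator (cf.\ Remark~\ref{n:odd}), does not depend on the parity of $n$; the paper's route is more mechanical and makes the component equations completely explicit.

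One clarification is needed in your last step. The hedge ``$Q\c(X)\c(\nu)$ (or its negative, depending on the chiral component)'' is misplaced: since $Q$ has matrix $\mathrm{diag}(I,-I)$ and $\c^\intercal(X)=\c(X)\c(\nu)$ has matrix $\mathrm{diag}(\c^\gamma(X),-\c^\gamma(X))$, the product $Q\c(X)\c(\nu)$ acts as $\c^\gamma(X)$ \emph{uniformly} on both factors of $\mathbb S\Sigma\oplus\mathbb S\Sigma$, with no residual sign. With this identity in hand, the computation
\[
\tfrac{\kappa{\bf i}}{2}\,Q\c(X)\c(\nu)\Psi^\theta
=-\tfrac{\kappa{\bf i}}{2}\,\c(X)Q\bigl(\pm(\kappa Q-\tau{\bf i})\bigr)\Psi^\theta
=\mp\tfrac{\kappa}{2}\,\c(X)(\kappa{\bf i}+\tau Q)\Psi^\theta
\]
closes the argument directly, and the appeal to Remark~\ref{match:sign:2} for an extra sign twist is unnecessary.
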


\begin{proof}
By (\ref{cliff:conn:ext}), 
\begin{eqnarray*}
	\nabla_X^\intercal
	\Psi^\theta
	& = & \mp\frac{{\bf i}}{2}\c(X)	
	\Psi^\theta
	-\frac{\tau}{2}\c^\intercal(X)
	\\
	& = & \pm\frac{\bf i}{2}\c^\intercal(X)\c(\nu)	
	\Psi^\theta
	-\frac{\tau{\bf}}{2}\c^\intercal(X)	
	\Psi^\theta.
\end{eqnarray*}
	If we  decompose $\Psi^\theta=(\Psi^\theta_1,\Psi^\theta_2)\in\Gamma(\mathbb S M|_\Sigma)$ as in (\ref{ident}) and use 
the identifications in Remark \ref{gen:desc:theta}, we get
\[
\nabla_X^\gamma
\left(
\begin{array}{c}
	\Psi^\theta_1\\
	\Psi^\theta_2
\end{array}
\right)=\pm\frac{1}{2}	\left(
\begin{array}{c}
	\c^\gamma(X)\Psi^\theta_2\\
	-\c^\gamma(X)\Psi^\theta_1
\end{array}
\right)
-\frac{\tau}{2}
\left(
\begin{array}{c}
	\c^\gamma(X)\Psi^\theta_1\\
	-\c^\gamma(X)\Psi^\theta_2
\end{array}
\right)
\]
The result follows if we note that by (\ref{comp:1}) the corresponding $\theta$-boundary condition says that $\Psi^\theta_2=\pm(\tau+\kappa{\bf i})\Psi^\theta_1$.
\end{proof}

Now, from Proposition \ref{geom:bd:det}, Gauss equation and (\ref{trig:fund}) we find that the scalar curvature of $\gamma$ is $R_\gamma=-(n-1)(n-2)\kappa^2$. Also,  
it follows from (\ref{asympthyp}) that $\gamma$ has the appropriate decay at infinity to the hyperbolic metric in 
$\mathbb H^{n-1}(-\kappa^2)$, the hyperbolic $(n-1)$-space with sectional curvature $-\kappa^2$. Hence, $(\Sigma,\gamma)$  is
asymptotically hyperbolic in the sense of \cite{chrusciel2003mass} (that is, as a boundaryless $(n-1)$-manifold having $\mathbb H^{n-1}(-\kappa^2)$ as its model at infinity) and 
therefore has a well defined mass vector. 
Since Proposition \ref{takes:care} and \cite[Theorem 8]{baum1991twistors} imply that $\gamma$ is Einstein with ${\rm Ric}_\gamma=-(n-2)\kappa^2\gamma$, the Ashtekar-Hansen-type formula in \cite[Theorem 3.3]{herzlich2016computing}  may be used to check that this mass vector vanishes. By the rigidity part of the positive mass theorem in \cite{chrusciel2003mass}, $(\Sigma,\gamma)$ is {\em isometric} to $\mathbb H^{n-1}(-\kappa^2)$. Thus, we have seen that the embedding $\Sigma\hookrightarrow M$ has the same first and second fundamental forms as the model embedding $\Sigma_s\hookrightarrow\mathbb H^n_s$. This allows us to glue $(M,g,\Sigma)$ to $(\mathbb H^n_{-s},b,\Sigma_{-s})$ along the common boundary to obtain a (smooth) boundaryless  $n$-manifold which is asymptotically hyperbolic (with $(\mathbb H^n,b)$ as its model at infinity) and Einstein (it actually carries a nonzero imaginary Killing spinor, an appropriate extension of $\Psi^\theta$). Again appealing to \cite{herzlich2016computing,chrusciel2003mass}, we find that this glued manifold is isometric to $(\mathbb H^n,b)$ and hence $(M,g,\Sigma)$ is isometric to $(\mathbb H^n_s,b,\Sigma_s)$, which completes the proof of Theorem \ref{main:pmt}.

\begin{remark}\label{reg:smooth}
	Although the claimed regularity of the glued metric, say $\widehat g$, in the argument above may be verified by means of the standard elliptic machinery as in \cite{deturck1981some}, for the specific gluing above a quite elementary approach is available, as we now describe. In principle, $\widehat g$ is only $C^{1,1}$ along the common boundary $\Sigma$ where the gluing takes place. 
	We note however that 
	as we approach a given point $p\in\Sigma$ from each side then $\widetilde g$ is actually {\em smooth} all the way up to $\Sigma$. Thus, we can choose a small neighborhood $V\subset \Sigma$ of $p$ and neighborhoods $U^{\pm}$ in each side of $\Sigma$ such that $U^+\cap U^-=V$. We set $\widehat g^\pm=\widehat g|_{U^\pm}$ which, being smooth, allows us to consider Fermi coordinates $\{x^\pm_i\}_{i=1}^n$ on $U^\pm$ and `'centered'' at $V$, with the convention that $x^\pm_n$  corresponds to the normal geodesic direction. In the following computations, we assume that, when decorating a geometric invariant, the symbol $\pm$ refers to $\widehat g^\pm$. Also, we use commas to denote partial differentiation with respect to the Fermi coordinates above. By letting greek indices vary from $1$ to $n-1$, we are left with the task of initially checking that $\widehat g^+_{\alpha\beta,nn}=\widehat g^-_{\beta\alpha,nn}$ at $p$, as these are the only second order derivatives of $\widehat g^\pm$ for which this possibly fails to hold (here we use that $\widehat g^\pm_{in}=\delta_{in}$ in these coordinates). We start by recalling that 
	\begin{equation}\label{to:inv}
	{\rm Ric}_{\widehat g^\pm_{ij}}\approx -\frac{1}{2}\widehat g_{\pm}^{kl}\widehat g^{\pm}_{ij,kl}+I^\pm_{ij}, 
	\end{equation} 
	where 
	\begin{equation}\label{chris}
	I^\pm_{ij}=\frac{1}{2}\left(\widehat g^\pm_{ki}\widehat \Gamma^k_{\pm,j}+g^\pm_{kj}\widehat\Gamma^k_{\pm,i}\right),\quad \widehat\Gamma^k_\pm=g^{rs}_\pm\widehat\Gamma_{\pm{rs}}^k,
	\end{equation}
	with $\widehat\Gamma_{\pm{rs}}^k$ being the Christoffel symbols and $\approx$ meaning here that we are discarding terms of at most first order in $\widehat g^\pm$, as they agree to each other as we approach $p$. By inverting (\ref{to:inv}) we obtain 
	\[
	\widehat g^{\pm}_{\alpha\beta,nn}\approx -2\widehat g^\pm_{nn}{\rm Ric}_{\widehat g_{\alpha\beta}^\pm}+2\widehat g^\pm_{nn}I^\pm_{\alpha\beta}\approx 2\widehat g^\pm_{nn}I^\pm_{\alpha\beta},
	\]  
	where in the last step we used that $\widehat g_\pm$ is Einstein with ${\rm Ric}_{\widehat g_{ij}^\pm}=-(n-1)\widehat g^\pm_{ij}$. 
	By (\ref{chris}),  
	\begin{eqnarray*}
		\widehat g^{\pm}_{\alpha\beta,nn} 
		& \approx & \widehat g^\pm_{nn}\left(\widehat g^\pm_{\phi\alpha}\widehat \Gamma^\phi_{\pm,\beta}+g^\pm_{\phi\beta}\widehat\Gamma^\phi_{\pm,\alpha}\right) \\
		& \approx & \widehat g^\pm_{nn}\widehat g_\pm^{nn}\left(\widehat g^\pm_{\phi\alpha}\widehat\Gamma_{\pm{nn,\beta}}^\phi+
		\widehat g^\pm_{\phi\beta}\widehat\Gamma_{\pm{nn,\alpha}}^\phi
		\right)\\
		& & \quad +
		\widehat g^\pm_{nn}\left(\widehat g^\pm_{\phi\alpha}
		\widehat g_\pm^{\varphi\epsilon}
		\widehat\Gamma_{\pm{\varphi\epsilon,\beta}}^\phi+
	\widehat g^\pm_{\phi\beta}
	\widehat g_\pm^{\varphi\epsilon}
	\widehat\Gamma_{\pm{\varphi\epsilon,\alpha}}^\phi
		\right).
	\end{eqnarray*}
Since 
\[
\widehat\Gamma_{\pm{nn}}^\phi=\frac{1}{2}g_\pm^{\phi i}\left(2g^\pm_{in,n}-g^\pm_{nn,i}\right)
\]
vanishes identically,
we see that the first term in the right-hand side above vanishes identically as well. On the other hand, by \cite[Lemma 1.2]{deturck1981some} we may assume that $\{x_\alpha\}$ is harmonic with respect to $\widehat g^\pm|_V$, so that the second term vanishes in the limit as we approach $p$. 
We conclude that $\widehat g^{+}_{\alpha\beta,nn}=	\widehat g^{-}_{\alpha\beta,nn}$ at $p$, that is, $\widehat g_{\alpha\beta}$ is $C^2$ (in fact, smooth), as desired.
   	\end{remark}

\section{The horospherical case}\label{horo:case}

Here we briefly discuss how the methods above can be modified to establish a positive mass theorem whose rigidity statement implies the  horospherical case in \cite[Theorem 2]{souam2021mean}; see Theorem \ref{main:th:horo} below and compare with Corollary \ref{souam:cor} and Remark \ref{horo}.  Roughly speaking, this corresponds to taking the limits $\theta\to\pm\pi/2$ in the results from the previous sections, but new phenomena emerge upon analysis of this formal limit, so a separate proof is required; see Remark \ref{limit:th} for more on this point. Needless to say, at least from a conceptual viewpoint the approach below is quite similar to the one leading to our previous main result (Theorem \ref{main:pmt}) so our presentation will be brief at some points.   		

By means of the hyperboloid model $\mathbb H^n\hookrightarrow\mathbb R^{1,n}$ described in Section \ref{intro}, we consider the {\em horoball}
\[
\mathbb H^n_{h,\chi}=\left\{x\in\mathbb H^n;V_{h}(x)\leq \chi\right\},\quad \chi>0,
\] 
where $V_{h}=V_{(0)}-V_{(1)}$. We denote by $\Sigma_{h,\chi}$ its boundary, so that as $\chi$ varies we obtain the so-called {\em horospherical foliation} of $\mathbb H^n$. This terminology is justified by the following result. 

\begin{proposition}\label{desc:horo}
The triple	$(\mathbb H^n_{h,\chi},b,\Sigma_{h,\chi})$ is a static domain whose boundary $\Sigma_{h,\chi}$ is a horosphere. In this case, $(\widetilde\Lambda,\widetilde\lambda)=(-n(n-1)/2,n-1)$
	and the corresponding space of static potentials $\mathcal N_{b,h}^{\chi}$ has dimension $n$ and is generated by $V_{h}, V_{(2)},\cdots, V_{(n)}$. 
	\end{proposition}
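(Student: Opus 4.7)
My plan is to adapt the strategy of Proposition \ref{secformSigma}, with the linear combination $V_h=V_{(0)}-V_{(1)}$ now playing the role previously assigned to $V_{(1)}$. The crucial new feature is that the vector $\partial_{x_0}-\partial_{x_1}\in\mathbb R^{1,n}$ is \emph{null}, and this degeneracy is what ultimately produces the horospherical geometry.

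First I would invoke the identity (\ref{formgrad}) to obtain, at any point of $\mathbb H^n$,
\[
|\nabla_b V_h|^2 = \langle\partial_{x_0}-\partial_{x_1},\partial_{x_0}-\partial_{x_1}\rangle_{1,n} + V_h^2 = V_h^2,
\]
the Minkowski term vanishing by nullity. Hence $|\nabla_bV_h|=\chi$ along $\Sigma_{h,\chi}$ and the outward unit normal is $\eta_{h,\chi}=\chi^{-1}\nabla_bV_h$. Repeating \emph{mutatis mutandis} the computation of $\Pi_s$ in Proposition \ref{secformSigma} with $V_h$ in place of $V_{(1)}$, and using $\nabla_b^2V=V\,b$ for every $V\in\mathcal N_b$, one obtains
\[
\Pi_{h,\chi}(X,Y)=\chi^{-1}(\nabla_b^2V_h)(X,Y)=\chi^{-1}V_h\,\gamma_{h,\chi}(X,Y)=\gamma_{h,\chi}(X,Y),
\]
so that $\lambda_{h,\chi}=1$, $\widetilde\lambda=(n-1)\lambda_{h,\chi}=n-1$, and $\Sigma_{h,\chi}$ is totally umbilical with constant mean curvature $n-1$. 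By the Gauss equation this forces the induced scalar curvature to vanish, so $\Sigma_{h,\chi}$ is intrinsically flat, and is therefore (the standard characterization of) a horosphere.

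For the last assertion I must identify those $V\in\mathcal N_b$ satisfying the Neumann condition $\partial_{\eta_{h,\chi}}V=V$ on $\Sigma_{h,\chi}$, equivalently $\langle\nabla_bV,\nabla_bV_h\rangle=V_hV$ there. Using (\ref{formgrad}) once more,
\[
\langle\nabla_bV_{(i)},\nabla_bV_h\rangle=\langle\partial_{x_i},\partial_{x_0}-\partial_{x_1}\rangle_{1,n}+V_{(i)}V_h=\epsilon_i+V_{(i)}V_h,
\]
with $\epsilon_0=\epsilon_1=-1$ and $\epsilon_i=0$ for $i\geq 2$. For a general $V=\sum_i a_iV_{(i)}$, linearity reduces the boundary condition on $\Sigma_{h,\chi}$ to the single algebraic constraint $\sum_i a_i\epsilon_i=0$, that is, $a_0+a_1=0$, which singles out precisely the $n$-dimensional subspace $[V_h,V_{(2)},\ldots,V_{(n)}]\subset\mathcal N_b$. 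The only real subtlety, which I expect to be the main (modest) obstacle, is to notice that the obstructions $\epsilon_0,\epsilon_1$ are $\chi$-independent constants and therefore cannot be absorbed into the homogeneous terms $V_{(i)}V_h$, forcing $V_{(0)}$ and $V_{(1)}$ to enter only through their null combination $V_h$; this is precisely the algebraic manifestation of the fact that the horosphere is the hypersurface orthogonal to a null direction at infinity.
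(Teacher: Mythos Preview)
Your proposal is correct and follows essentially the same route as the paper's proof: both compute $|\nabla_bV_h|=\chi$ from (\ref{formgrad}) using the nullity of $\partial_{x_0}-\partial_{x_1}$, obtain the shape operator (equivalently $\Pi_{h,\chi}=\gamma_{h,\chi}$) from the Hessian identity $\nabla_b^2V_h=V_h\,b$, and then use (\ref{formgrad}) again to reduce the Neumann condition $\partial_{\eta_h}V=V$ to the single linear constraint $a_0+a_1=0$ on the coefficients in $\mathcal N_b$. The only cosmetic difference is that the paper records the individual normal derivatives $\partial_{\eta_h}V_{(i)}$ and leaves the linear algebra implicit, whereas you package it as the constraint $\sum_ia_i\epsilon_i=0$; and the paper identifies $\Sigma_{h,\chi}$ as a horosphere directly from $\nabla_X\eta_h=X$ rather than via intrinsic flatness.
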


\begin{proof}
	Along $\Sigma_{h,\chi}$ we have $|\nabla^bV_h|=\chi$ so its outward pointing unit normal is $\eta_h=\chi^{-1}\nabla^bV_h$. As in the proof of Proposition \ref{secformSigma} we compute that 
	\begin{equation}\label{horo:ind}
	\nabla^b_X\eta_h=X, \quad X\in\Gamma(T\Sigma_{h,\chi}),
	\end{equation}
	which proves that $\Sigma_{h,\chi}$ is a horosphere indeed (with constant mean curvature equal to $n-1$). In particular, $\mathcal N_{b,h}^{\chi}$ is formed by
	those static potentials of $(\mathbb H^n,b)$ which, when 
	 restricted to $\Sigma_{h,\chi}$,   satisfy $\partial V/\partial\eta_h=V$. A calculation using (\ref{formgrad}) shows that, restricted to $\Sigma_{h,\chi}$,
	\[
	\frac{\partial V_{(0)}}{\partial\eta_h}=x_0-\chi^{-1},\quad \frac{\partial V_{(1)}}{\partial\eta_h}=x_1-\chi^{-1},\quad \frac{\partial V_{(j)}}{\partial\eta_h}=x_j, \quad j\geq 2,	
	\] 
	which completes the proof. 
\end{proof}

In analogy with the discussion preceding Conjecture \ref{conjmp}, 
we now seek to determine the structure of the natural representation of the relevant subgroup of isometries of $(\mathbb H^n,b)$  on the space of static potentials $\mathcal N_{b,h}^{\chi}$. We start with the {\em parabolic} subgroup $\mathcal P$ of  isometries intertwining the horospherical leaves $\Sigma_{h,\chi}$ above.  
In the hyperboloid model $\mathbb H^n\hookrightarrow\mathbb R^{1,n}$, it may be accessed as follows \cite[Section 10.2]{taylor1986noncommutative}. 
As in Remark \ref{iso:desc}, any element of the isometry group $O^\uparrow(1,n)$ of $(\mathbb H^n,b)$, viewed as (the restriction of) a time-orientation preserving linear isometry of $(\mathbb R^{1,n},\langle\,,\rangle_{1,n})$, defines a conformal diffeomorphism of the sphere at infinity ${\mathbb S}^{n-1}_\infty$  of $\mathbb H^n$, and conversely. Here, we view  $\mathbb S^{n-1}_\infty$ as the set of isotropic lines in the future-directed cone $\mathcal C^\uparrow:=\{x\in\mathbb R^{1,n};\langle x,x\rangle_{1,n}=0, x_0>0\}$. Under this identification, the horospherical foliation $\Sigma_{h,\chi}$ picks the distinguished element $o=\{x\in\mathbb R^{1,n};x_0=x_1\}\cap\mathcal C^\uparrow\in\mathbb S^{n-1}_\infty$. In this picture, $\mathcal P$ emerges as the stabilizer of $o$ under this conformal action of $O^\uparrow(1,n)$  on  ${\mathbb S}^{n-1}_\infty$, so that $\mathbb S^{n-1}_\infty=O^\uparrow(1,n)/\mathcal P$, the homogeneous flat model of Conformal Geometry. 
Explicitly, we have the {\em Langlands decomposition}
\[
\mathcal P=O(n-1)\mathbb R N,
\] 
where $O(n-1)$ acts by rotations on $[\partial_{x_2},\cdots,\partial_{x_n}]$,
$\mathbb R$ comes from boosts 
\[
\mathfrak b_{\varrho}=\left(
\begin{array}{cc}
\cosh\varrho & \sinh\varrho\\
\sinh\varrho & \cosh\varrho	
	\end{array}
\right), \quad \varrho\in\mathbb R,
\]
acting on the Lorentzian plane $[\partial_{x_0},\partial_{x_1}]$ and the last factor $N$ comprises the isomorphic image of $\mathbb R^{n-1}$ under exponentiation:
\[
U\in\mathbb R^{n-1}\stackrel{\rm exp}{\longrightarrow}  \left(
\begin{array}{ccc}
	1+\frac{|U|^2}{2} & -\frac{|U|^2}{2} & U\\
	\frac{|U|^2}{2} & 1-\frac{|U|^2}{2} & U\\
	U^t & -U^t & I_{n-1}
\end{array}
\right)\in N.
\]
Here,  
 the superscript means transpose of a row vector.
Since $N=\mathbb R^{n-1}\subset \mathcal P$ is normal and intersects $O(n-1)\mathbb R$ only at the identity element, we actually have the semi-direct product representation
\[
\mathcal P=\left(0(n-1)\mathbb R\right)\ltimes\mathbb R^{n-1}. 
\] 
As can be easily checked, each $\mathfrak b_\varrho$ 
induces an (ambient) isometry between  $(\mathbb H^n_{h,\chi},b)$ and $(\mathbb H^n_{h,e^{-\varrho}\chi},b)$ corresponding to a multiplicative shift of $e^{-\varrho}$ on the parameter $\chi$. We conclude that  $\mathcal P^h:=\mathcal P/\mathbb R=O(n-1)\ltimes\mathbb R^{n-1}$, the group of Euclidean isometries in dimension $n-1$, is the full isometry group of each $(\mathbb H^n_{h,\chi},b)$. In particular, since the $\mathcal P^h$-action on $\Sigma_{h,\chi}=\partial \mathbb H^n_{h,\chi}$ is transitive, this confirms that each  horosphere is 
{\em intrinsically} flat.
As in Remark \ref{iso:desc}, we obtain a natural representation $\rho^{h}$ of $\mathcal P^h$ on $\mathcal N_{b,h}^\chi$ by setting $\rho^h_A(V)=V\circ A^{-1}$.    

\begin{proposition}\label{action:parab} The space of static potentials $\mathcal N_{b,h}^\chi$ splits into two irreducible representations under $\rho^{h}$,   namely,
	\begin{equation}\label{sp:irr}
	\mathcal N_{b,h}^{\chi}=[V_h]\oplus[V_{(2)},\cdots,V_{(n)}],
	\end{equation}
	with $\rho^h|_{[V_h]}$ being trivial (that is, $\rho^h_A(V_h)=V_h$ for any $A$). 
	\end{proposition}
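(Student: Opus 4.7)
The plan is to verify the claim by direct computation of $\rho^h$ on the basis $\{V_h, V_{(2)}, \ldots, V_{(n)}\}$ of $\mathcal N_{b,h}^\chi$, using the Langlands decomposition $\mathcal P^h = O(n-1) \ltimes \mathbb R^{n-1}$ to treat rotations and horospherical translations separately, and then reading off the subrepresentation structure.

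First I would check that $V_h$ is $\mathcal P^h$-invariant. Any $R \in O(n-1)$ is embedded as the identity on $(x_0,x_1)$ and so trivially fixes $V_h = V_{(0)} - V_{(1)}$. For a translation $A = \exp(U)$, $U \in \mathbb R^{n-1}$, block multiplication with the exponential displayed in the text yields $(Ax)_0 - (Ax)_1 = x_0 - x_1$, whence $\rho^h_A(V_h) = V_h$; this settles the last assertion of the statement and exhibits $[V_h]$ as a one-dimensional trivial subrepresentation. Next I would look at the remaining generators: rotations act on $(V_{(2)}, \ldots, V_{(n)})$ through the standard $(n-1)$-dimensional representation of $O(n-1)$, while the same block computation yields $(Ax)_i = x_i + U_{i-1}(x_0 - x_1)$ for $i \geq 2$, so that
\[
\rho^h_{\exp(U)}(V_{(i)}) = V_{(i)} - U_{i-1}\, V_h.
\]
Hence translations act trivially modulo $[V_h]$, and the induced representation on $\mathcal N_{b,h}^\chi / [V_h]$ is the standard representation of $O(n-1)$, which is irreducible for $n \geq 2$. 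This produces the two irreducible Jordan--H\"older factors of $\rho^h$ corresponding to the summands displayed in (\ref{sp:irr}).

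The point that requires some care --- and where I expect the only real subtlety --- is the interpretation of the direct sum in (\ref{sp:irr}): the complement $[V_{(2)}, \ldots, V_{(n)}]$ is not literally $\rho^h$-stable, since translations introduce the correction $-U_{i-1}V_h$. The display should therefore be read as exhibiting the two irreducible subquotients of the $\mathcal P^h$-invariant filtration $0 \subset [V_h] \subset \mathcal N_{b,h}^\chi$, which is the structure actually needed in the sequel to organise the horospherical mass functional as a pairing against each irreducible piece.
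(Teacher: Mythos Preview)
Your argument is correct and more explicit than the paper's. The paper passes to the Poincar\'e half-space model $(\mathbb R^n_+,z_1^{-2}\delta)$, where $\mathcal P^h$ acts by Euclidean motions on each horosphere $\{z_1=c\}$, and observes (via the formulae $V_h(z)=z_1^{-1}$, $V_{(j)}(z)=z_jz_1^{-1}$) that the only static potentials constant along the horospherical leaves---hence the only $\rho^h$-fixed vectors---are the multiples of $V_h$. This pins down the trivial subrepresentation $[V_h]$ but does not address the complement directly; the irreducibility of the quotient is left implicit and follows, as in your argument, from the standard $O(n-1)$-action. Your matrix computation in the hyperboloid model is at least as clean and yields strictly more information.

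Your final paragraph is the most valuable part. The identity $\rho^h_{\exp(U)}(V_{(i)})=V_{(i)}-U_{i-1}V_h$ shows that $[V_{(2)},\ldots,V_{(n)}]$ is \emph{not} $\rho^h$-stable, so the displayed decomposition should indeed be read as the list of Jordan--H\"older factors of the filtration $0\subset[V_h]\subset\mathcal N_{b,h}^\chi$ rather than as a genuine direct sum of subrepresentations; the paper's proof does not raise this point. It is, however, fully compatible with the applications that follow: the mass $\mathfrak m_h=\mathfrak m_{h,F}(V_h)$ is chart-independent because $[V_h]$ \emph{is} invariant, while the complementary charge $\mathcal C_{h,F}$ of Remark~\ref{ham:charge} acquires a shift proportional to $\mathfrak m_h$ under a horospherical translation---exactly the transformation law one expects of a center of mass.
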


\begin{proof}
From the discussion above it is clear that the only elements of 
$\mathcal N_{b,h}^{\chi}$ left fixed by
	 $\rho^h$ are precisely those which are constant along each horospherical leaf. Hence, the proof essentially amounts to checking that the only static potentials of $(\mathbb H^n,b)$ meeting this property are proportional to $V_h$.  
This claim may be readily verified if we pass to the Poincar\'e half-space model 	$(\mathbb R^n_+,\widetilde b)$ of hyperbolic $n$-space, where $\mathbb R^n_+=\{z=(z_1,z_2,\cdots,z_n)\in\mathbb R^n;z_1>0\}$ and $ \widetilde b=z_1^{-2}\delta$. We assume here that the distinguished point $o\in\mathbb S^{n-1}_\infty$ is mapped onto the point at infinity $\{z_1=+\infty\}$, so that the horospheres above correspond to the hyperplanes in the family $\{z_1=c\}_{c>0}$ and the associated parabolic group $\mathcal P$ acts on  the hyperplane at infinity $\mathbb R^{n-1}_\infty=\{z\in\mathbb R^n;z_1=0\}$ by similitudes. As expected, when acting on $(\mathbb R^n_+,\widetilde b)$ by isometries, $\mathcal P^h$ preserves each horospherical leaf so that the only coordinate function left fixed  is $z_1$. On the other hand, it is known that the isometry relating $\widehat b$ and $\widetilde b$ satisfies
	\begin{equation}\label{direct:map}
	z_1=\frac{1-|y|^2_\delta}{|\partial_{y_1}-y|_\delta^2}, \quad \partial_{y_1}=(1,0,\cdots,0), 
	\quad 
	z_j=\frac{2y_j}{{|\partial_{y_1}-y|_\delta^2}}, \quad j\geq 2. 
	\end{equation}
We may combine this with (\ref{exp:stat:ball}) to check that  
\begin{equation}\label{stat:upper}
V_h(z)=z_1^{-1}, \quad  V_{(j)}(z)=z_jz_1^{-1},\quad j\geq 2,
\end{equation}
 which finishes the proof. 
\end{proof}

Henceforth we drop the symbol $\chi$ from the notation, so the corresponding static manifold will be denoted by $(\mathbb H^n_h,b,\Sigma_h)$, etc. Alternatively, we may take $\chi=1$ for simplicity.
Proceeding
in analogy with Definition \ref{def:as:hyp}, we may consider an asymptotically hyperbolic manifold, say $(M,g,\Sigma)$, with a non-compact boundary $\Sigma$ and modeled  at infinity on the static manifold $(\mathbb H^n_h,b,\Sigma_h)$; compare with \cite[Definition 1]{chai2021asymptotically}. Also, as in Theorem \ref{finitemass} and with a self-explanatory notation,  we may  define a mass invariant for $(M,g,\Sigma)$ by setting 
\begin{equation}\label{def:mass:par}
\mathfrak m_{h,F}(V_h)=\lim_{r\to
	+\infty}\left[\int_{S^{n-1}_{r,+}}\langle\mathbb U(V_h,e),\mu\rangle
dS^{n-1}_{r,+}-
\int_{S^{n-2}_{r}}V_he(\eta,\vartheta)dS^{n-2}_{r}\right],
\end{equation} 
where as usual $F$ is a chart at infinity. 
Of course, here we rely on the decomposition (\ref{sp:irr}) to restrict the mass functional to $[V_h]$ and disregard its action on the complement $[V_{(2)},\cdots,V_{(n)}]$ of $[V_h]$ in $\mathcal N_{b,h}$. However, we argue in Remark \ref{ham:charge} below that the (vector-valued) invariant which arises by restricting the mass functional to this complement qualifies  as a `'center of mass'' of the underlying manifold. Also, we point out that the model $\mathbb H^n_h$ is compactified by adding a single point at infinity,  so the picture here is quite similar to the definition of the mass invariant in \cite{almaraz2014positive}.
The important point now is that an analogue of Lemma \ref{rigid} guarantees that two charts at infinity differ by an element of $\mathcal P^h$ (up to a term that vanishes as $r\to +\infty$). Proceeding as in the proof of Theorem \ref{geoinv} and taking Proposition \ref{action:parab} into account, we thus see that the right-hand side in (\ref{def:mass:par}) actually does {\em not} depend on which chart is used and the conclusion is that the mass 
\[
\mathfrak m_h:=\mathfrak m_{h,F}(V_h)
\] 
is a {\em numerical} invariant. 
 		
In order to obtain a Witten-type formula for $\mathfrak m_h$ in the spin category under suitable dominant energy conditions, we must impose an appropriate boundary condition on spinors along $\Sigma$. In view of Remark \ref{theta:v}, it is natural to implement this by considering the MIT bag boundary operator $Q_{h}={\bf i}\c(\nu):\Gamma(\mathbb SM|_\Sigma)\to \Gamma(\mathbb SM|_\Sigma)$, where $\nu$ is the inward unit normal along $\Sigma$. 
Notice that, differently from the $\theta$-boundary operator in (\ref{mu:bd}) which involves the chirality operator $Q=\omega$ and therefore only makes sense for $n$ even, $Q_h$ is well defined in {\em any} dimension.
In any case, spinors are required to satisfy $Q_{h}\Psi=\pm\Psi$ along $\Sigma$. Now we must check that there exist plenty of such spinors in the model space. 

\begin{proposition}\label{plenty}
	If $n=2k$ or $n=2k+1$ then, with self-explanatory notation,
		\[
		\dim_{\mathbb C}\mathcal K^{b,\pm,(\pm)_{h}}(\mathbb S\mathbb H^n_h)=2^{k-1}
		\]
	In particular, $\mathcal K^{b,\pm,(\pm)_{h}}(\mathbb S\mathbb H^n_h)\neq\{0\}$.	
		\end{proposition}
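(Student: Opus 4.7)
The proof follows the scheme of Proposition \ref{dimkill}, with a significant simplification: because the MIT bag operator $Q_h = {\bf i}\c(\nu)$ does not involve the chirality operator $\omega$, a single argument works uniformly in both parities of $n$, bypassing the doubling trick of Remark \ref{n:odd}.

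The first step is to exploit flatness: a direct curvature computation shows that on the space form $(\mathbb H^n,b)$ of constant sectional curvature $-1$, the Killing connection $\nabla^\pm$ is flat, so at any fixed base point $p \in \Sigma_h$ parallel transport yields a linear isomorphism $\mathrm{ev}_p : \mathcal K^{b,\pm}(\mathbb S\mathbb H^n) \to \mathbb S_p\mathbb H^n$. Since $\dim_{\mathbb C}\mathbb S_p\mathbb H^n = 2^k$ in both cases $n=2k$ and $n=2k+1$, it follows that $\dim_{\mathbb C}\mathcal K^{b,\pm}(\mathbb S\mathbb H^n) = 2^k$. Moreover, the operator ${\bf i}\c(\nu(p))$ acting on the fiber $\mathbb S_p\mathbb H^n$ is a self-adjoint (since $\c(\nu(p))$ is skew-Hermitian by property (2)) trace-free involution, so its $\pm 1$-eigenspaces each have complex dimension $2^{k-1}$.

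The crucial step is then to show that the boundary condition \emph{propagates}: if $\Psi \in \mathcal K^{b,\pm}(\mathbb S\mathbb H^n)$ satisfies $Q_h\Psi(p) = \pm\Psi(p)$ at a single point $p \in \Sigma_h$, then $Q_h\Psi = \pm\Psi$ on all of $\Sigma_h$. Along any smooth curve $\alpha$ in $\Sigma_h$ starting at $p$ one would introduce the boundary defect $\phi(t) = Q_h\Psi(\alpha(t)) \mp \Psi(\alpha(t))$ and differentiate, using the Killing equation $\nabla_X\Psi = \mp\frac{\bf i}{2}\c(X)\Psi$ together with the horospherical identity $\nabla_X\nu = -X$ extracted from (\ref{horo:ind}). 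Invoking the anticommutation $\c(\nu)\c(X) = -\c(X)\c(\nu)$ available because $X$ is tangent to $\Sigma_h$, the resulting expression collapses to a linear ODE of the form $\phi'(t) = \frac{1}{2}\c(\alpha'(t))\c(\nu)\phi(t)$, whence $\phi(0)=0$ forces $\phi \equiv 0$ by uniqueness; connectedness of $\Sigma_h$ then finishes this step.

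Combining the three inputs, the evaluation map at $p$ restricts to a linear isomorphism between $\mathcal K^{b,\pm,(\pm)_h}(\mathbb S\mathbb H^n_h)$ and the $\pm 1$-eigenspace of ${\bf i}\c(\nu(p))$ in $\mathbb S_p\mathbb H^n$, producing the asserted dimension $2^{k-1}$. The main obstacle is the ODE propagation argument in the previous paragraph, which requires careful tracking of the sign matching of Remark \ref{match:sign:2} between the Killing connection $\nabla^\pm$ and the boundary eigenvalue $\pm 1$: it is precisely this pairing that causes the two terms arising in the computation of $\phi'$ to recombine into something proportional to $\phi$ itself, rather than producing an inhomogeneous source which would invalidate the argument.
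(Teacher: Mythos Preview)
Your argument is correct and takes a genuinely different route from the paper's. The paper simply instructs one to rerun the proof of Proposition~\ref{dimkill} at $\theta=\pi/2$: that proof works in the Poincar\'e ball model, uses the explicit description (\ref{presc}) of imaginary Killing spinors in terms of $\nabla^\delta$-parallel spinors, and pushes the boundary operator through the conformal identification via the chiral decomposition of Remark~\ref{gen:desc:theta} and Proposition~\ref{comp:0}. In particular, that route needs a chirality operator, so for odd $n$ it implicitly invokes the doubling device of Remark~\ref{n:odd}.

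Your approach instead stays on $(\mathbb H^n,b)$ itself: you use flatness of the Killing connection to identify $\mathcal K^{b,\pm}$ with a single fiber, and then an ODE propagation argument along $\Sigma_h$, driven by the horospherical identity $\nabla_X\nu=-X$, to show that the MIT bag condition at one point forces it everywhere. The computation you outline does close up: with $\phi=Q_h\Psi\mp\Psi$ and $\nabla_X\Psi=\mp\tfrac{{\bf i}}{2}\c(X)\Psi$ one finds $\nabla_X\phi=\tfrac{1}{2}\c(X)\c(\nu)\phi$, and your remark about the sign matching of Remark~\ref{match:sign:2} is exactly on point---without it an inhomogeneous term proportional to $\c(X)\Psi$ survives and the uniqueness argument collapses. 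The payoff of your approach is uniformity in $n$ (no chirality operator, no doubling) and a model-free, more geometric argument; the payoff of the paper's approach is that it produces the Killing spinors in the explicit form (\ref{presc}) needed anyway in the proof of Proposition~\ref{link:stop:2}.
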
 

\begin{proof}
	Just repeat the proof of Proposition \ref{dimkill} with $\theta=\pi/2$, which, as already remarked,  morally corresponds to the case considered here. 
\end{proof}

We now confirm that the well-known link between imaginary Killing spinors and static potentials works fine here as well; compare with Proposition \ref{link:stop}.

	\begin{proposition}\label{link:stop:2}
	For any $\Phi\in\mathcal K^{b,\pm,(\pm)_{h}}(\mathbb H^n_h)$ we have that $V_\Phi:=|\Phi|^2\in [V_h]$. In fact, if $\Phi=\Phi_{u,\pm}$ for some $\nabla^\delta$-parallel spinor $u$ as in (\ref{presc}) then $V_\Phi=|u|_\delta^2V_h$.  
\end{proposition}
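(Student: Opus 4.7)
The approach is to adapt the template of the proof of Proposition \ref{link:stop} to the horospherical setting. The key conceptual novelty is that here I would close the argument using the positivity $V_\Phi=|\Phi|^2\geq 0$ rather than the purely algebraic manipulation available in the geodesic/equidistant case.

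First I would invoke the general formula (\ref{exp:ext:im:s}), which expresses $V_{\Phi_{u,\pm}}$ as a linear combination of $V_{(0)}, V_{(1)}, \ldots, V_{(n)}$ for any $\nabla^\delta$-parallel spinor $u$ on the Poincar\'e ball, so in particular $V_\Phi\in\mathcal N_b$. Then I would translate the MIT bag condition ${\bf i}\c^b(\nu)\Phi=\pm\Phi$ (with $\nu$ the inward unit normal along $\Sigma_h$, so $\eta_h=-\nu$) into the algebraic identity $\c^b(\eta_h)\Phi=\pm{\bf i}\Phi$ along $\Sigma_h$. Combining this with the Leibniz-type identity $YV_\Phi=\mp{\bf i}\langle\c^b(Y)\Phi,\Phi\rangle$ (the horospherical analogue of (\ref{re:im})) immediately yields the Robin-type condition $\partial_{\eta_h}V_\Phi=V_\Phi$ along $\Sigma_h$. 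By Proposition \ref{desc:horo}, this places $V_\Phi$ in the smaller subspace $\mathcal N_{b,h}^\chi=[V_h, V_{(2)}, \ldots, V_{(n)}]$.

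The crucial final step then uses $V_\Phi\geq 0$ to eliminate the components along $V_{(2)}, \ldots, V_{(n)}$. Writing $V=aV_h+\sum_{j\geq 2}b_jV_{(j)}\in\mathcal N_{b,h}^\chi$ in the Poincar\'e ball model by means of (\ref{exp:stat:ball}), one finds
\[
V(y)=\frac{a\,|y-\partial_{y_1}|_\delta^2+2\sum_{j\geq 2}b_jy_j}{1-|y|_\delta^2}.
\]
Approaching the tangency point $\partial_{y_1}\in\partial\mathbb B^n$ along rays $y=\partial_{y_1}-\epsilon w$ for small $\epsilon>0$ and unit $w$ with $w_1>0$, a direct calculation gives $V(y)\to -\langle b,w\rangle/w_1$ as $\epsilon\to 0^+$, where $b=(0,b_2,\ldots,b_n)$. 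Demanding $V\geq 0$ for all such directions forces $b=0$, so $V_\Phi$ must be a non-negative multiple of $V_h$. Evaluating at $y=0$ and comparing with (\ref{exp:ext:im:s}) pins down this multiple as $|u|_\delta^2$.

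I anticipate the main delicacy will lie in the second step above, namely in tracking the sign conventions inherited from the $\theta$-boundary operator formalism at the limit $\theta=\pi/2$ and in handling the competing orientations of $\nu$ (inward) versus $\eta_h$ (outward), so that the resulting Robin condition for $V_\Phi$ matches exactly the characterization of $\mathcal N_{b,h}^\chi$ given by Proposition \ref{desc:horo}. Once these are in place, both the placement $V_\Phi\in\mathcal N_{b,h}^\chi$ and the subsequent boundary-behavior analysis in the positivity step reduce to direct computation.
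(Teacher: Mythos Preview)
Your proof is correct and takes a genuinely different route from the paper's. Both arguments begin in the same way: the MIT bag condition combined with the identity $YV_\Phi=\mp{\bf i}\langle\c^b(Y)\Phi,\Phi\rangle$ yields the Robin relation $\partial_{\eta_h}V_\Phi=V_\Phi$, which (together with the general expansion (\ref{exp:ext:im:s})) places $V_\Phi$ inside $\mathcal N_{b,h}^\chi=[V_h,V_{(2)},\ldots,V_{(n)}]$. From there the approaches diverge. You finish by exploiting the sign constraint $V_\Phi=|\Phi|^2\geq 0$: the asymptotic analysis near the tangency point $\partial_{y_1}\in\partial\mathbb B^n$ shows that any nonzero component along $V_{(j)}$, $j\geq 2$, would force $V_\Phi$ to take negative values, which is impossible. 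The paper instead lets the parameter $\chi$ vary so that the Robin relation becomes the first-order equation $\nu V_\Phi+V_\Phi=0$ \emph{everywhere}, rewrites it in the upper half-space model, and then combines the resulting algebraic identity on the coefficients of $u$ with the type~I condition (\ref{type:I}) to kill the $V_{(j)}$ terms.

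Your positivity argument is more elementary and, notably, does not appeal to the type~I hypothesis at all; it also avoids the passage to the half-space model and the explicit inversion of (\ref{direct:map}). The paper's computation, on the other hand, extracts more information about the parallel spinor $u$ itself (namely ${\bf i}\langle\c^\delta(\partial_{y_1})u,u\rangle_\delta+|u|_\delta^2=0$), which ties the result back to the spinorial structure rather than to a soft sign argument. One small comment: your final step of ``evaluating at $y=0$'' is cleaner if phrased as comparing the coefficient of $V_{(0)}$ in (\ref{exp:ext:im:s}) with that in $aV_h=a(V_{(0)}-V_{(1)})$, since for small $\chi$ the origin need not lie in the horoball.
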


\begin{proof}
	We take $\Phi=\Phi_{u,\pm}$ and note that the derivation of  (\ref{ref:need}) only uses the corresponding boundary condition. Thus, if we use it with $\tau=1$ we get ${\bf i}\langle \c^b(\nu)\Phi,\Phi\rangle=\pm|\Phi|^2$ along $\Sigma_h$, where here $\nu=-\eta_h$.  Bearing in mind that $\nabla^\pm\Phi=0$ and comparing with 
	(\ref{re:im}) we find that $V_\Phi=-\nu V_\Phi$ .  
By bringing back the parameter $\chi>0$ and letting it vary, we see that
this holds true everywhere along $\mathbb H^n$ if we think of $\nu$ as being the  (globally defined) unit normal along the leaves of the horospherical foliation. 
To investigate the constraints imposed on $V_\Phi$ by this equation, we  
	pass to the half-space model appearing in the proof of Proposition \ref{action:parab}. 
	We may invert (\ref{direct:map}) to obtain
	\[
	y_1=\frac{|z|_\delta^2-1}{|z+\partial_{z_1}|_\delta^2},\quad y_j=\frac{2z_j}{|z+\partial_{z_1}|_\delta^2}, \quad j\geq 2,
	\]
	and then compute that 
	\[
	V_{(1)}(z)=\frac{2y_1}{1-|y|_\delta^2}=\frac{2(|z|_\delta^2-1)|z+\partial_{z_1}|_\delta^2}{|z+\partial_{z_1}|_\delta^4-(|z|_\delta^2-1)^2-4(|z|_\delta^2-z_1^2)}=\frac{|z|_\delta^2-1}{2z_1}.
	\]
	Combining this with (\ref{stat:upper}) we see that, in this half-space model, (\ref{exp:ext:im:s}) may be rewritten as 
	\begin{eqnarray*}
	V_\Phi(z)
	& = & |u|^2_\delta z_1^{-1}+\frac{1}{2}\left({\bf i}\langle \c^\delta(\partial_{y_1})u,u\rangle_\delta+|u|_\delta^2\right)(|z|_\delta^2-1)z_1^{-1}\\
	& & \quad 
	\mp{\bf i}\sum_{j\geq 2}\langle \c^\delta(\partial_{y_j})u,u\rangle_\delta z_jz_1^{-1}. 
	\end{eqnarray*}
	Since $\nu=z_1\partial_{z_1}$ we get 
	\begin{eqnarray*}
	(\nu V_\Phi)(z)
	& = & -|u|^2_\delta z_1^{-1}+\frac{1}{2}\left({\bf i}\langle \c^\delta(\partial_{y_1})u,u\rangle_\delta+|u|_\delta^2\right)(2-(|z|^2_\delta-1)z_1^{-1})\\
	& & \quad 
	\pm{\bf i}\sum_{j\geq 2}\langle \c^\delta(\partial_{y_j})u,u\rangle_\delta z_jz_1^{-1}, 
\end{eqnarray*}
which gives 
	\[
	 {\bf i}\langle \c^\delta(\partial_{y_1})u,u\rangle_\delta+|u|_\delta^2=
	 	(\nu V_\Phi+V_\Phi)(z)=0. 
\]
Thus, if we set $\langle \c^\delta(\partial_{y_j})u,u\rangle=a_j{\bf i}$, $a_j\in\mathbb R$, it follows from this and (\ref{type:I}) that  
\[
\sum_{j\geq 2}a_j^2=-\sum_{j\geq 2}\langle \c^\delta(\partial_{y_j})u,u\rangle^2=|u|_\delta^4+
\langle\c^\delta(\partial_{y_1})u,u\rangle^2=0.
\]
Hence, $V_\Phi(z) = |u|^2_\delta z_1^{-1}$
which completes the proof in view of (\ref{stat:upper}).
\end{proof}
 		
We may now outline the argument leading to a positive mass inequality for $\mathfrak m_h$, with a rigidity statement included. As before, we start with some  $\Phi\in\mathcal K^{b,\pm,(\pm)_{h}}(\mathbb H^n_s)$. We may assume that $\Phi=\Phi_{u,\pm}$, where $\nabla^\delta u=0$ and $|u|_\delta=1$, so that $V_{\Phi}=V_h$ by Proposition \ref{link:stop:2}.  Under the appropriate dominant energy conditions as in Theorem \ref{main:th:horo} below, the usual analytical machinery may be employed to obtain another spinor $\Psi_u\in\Gamma(\mathbb SM)$ which is Killing harmonic, satisfies the corresponding MIT bag boundary condition along $\Sigma$ and asymptotes $\Phi_{u,\pm}$ at infinity. A standard computation provides the corresponding Witten-type formula:
	\begin{eqnarray}\label{witten:horo}
	\frac{1}{4}\mathfrak m_h & = & \int_M\left(|\nabla^{\pm}\Psi_\Phi|^2+\frac{R_g+n(n-1)}{4}|\Psi_u|^2\right)dM\\
	& & \quad +\frac{1}{2}\int_\Sigma (H_g-(n-1))|\Psi_u|^2d\Sigma.\nonumber
\end{eqnarray}
This is the key ingredient in establishing the following positive mass theorem.

	\begin{theorem}\label{main:th:horo}
	Let $(M,g,\Sigma)$ be an asymptotically hyperbolic spin manifold (modeled at infinity on $(\mathbb H^n_h,b,\Sigma_h)$) with $R_g\geq -n(n-1)$ and $H_g\geq (n-1)$. Then 
	$\mathfrak m_h\geq 0$ and the equality holds if and only if 
$(M,g,\Sigma)$ is isometric to 
 $(\mathbb H^n_h,b,\Sigma_h)$.
\end{theorem}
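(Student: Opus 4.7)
I plan to follow the blueprint of Theorem \ref{main:pmt}, adapting it to the MIT bag boundary condition and to the horospherical model. Positivity will flow from the Witten-type formula (\ref{witten:horo}). For rigidity, I will extract an imaginary Killing spinor on $M$ satisfying $Q_h\Psi^h=\pm\Psi^h$, derive the geometric consequences (Einstein with ${\rm Ric}_g=-(n-1)g$, $\Sigma$ totally umbilical with $H_g=n-1$, intrinsic flatness of $\Sigma$), and then glue $(M,g,\Sigma)$ to the complementary region $(\mathbb H^n\setminus{\rm int}\,\mathbb H^n_h,b)$ along $\Sigma\cong\Sigma_h$, so as to apply the Chrusciel--Herzlich rigidity of the asymptotically hyperbolic positive mass theorem to the resulting closed-up manifold.

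\textbf{Positivity step.} Proposition \ref{alg:form} specialized to $\kappa=0,\tau=1$ shows that $Q_h=\mathbf i\mathfrak c(\nu)$ is a self-adjoint involution satisfying $\{D^\intercal,Q_h\}=0$, so the projections $P_h^{(\pm)}=\frac{1}{2}(I\pm Q_h)$ define elliptic boundary conditions for $D^\pm$. Combining this with the $s$-AH elliptic machinery underlying Proposition \ref{existspin}, for any $\Phi=\Phi_{u,\pm}\in\mathcal K^{b,\pm,(\pm)_h}(\mathbb S\mathbb H^n_h)$ with $|u|_\delta=1$ the usual transplant-and-correct procedure produces a harmonic spinor $\Psi_u\in\Gamma(\mathbb SM)$ with $D^\pm\Psi_u=0$, $Q_h\Psi_u=\pm\Psi_u$ along $\Sigma$, and $\Psi_u-\Phi_*\in L^2_1$. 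Applying the integral Lichnerowicz formula (\ref{parts4}) on an exhausting family of subdomains and recognizing $V_\Phi=V_h$ via Proposition \ref{link:stop:2} yields the Witten-type identity (\ref{witten:horo}); the nonnegativity of the right-hand side under the assumed dominant energy conditions gives $\mathfrak m_h\geq 0$.

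\textbf{Rigidity step.} If $\mathfrak m_h=0$ then (\ref{witten:horo}) forces $\nabla^\pm\Psi_u=0$, $R_g=-n(n-1)$, and $H_g=n-1$, so $\Psi^h:=\Psi_u$ is a nontrivial imaginary Killing spinor of type I on $M$ satisfying the MIT bag condition. A standard argument then gives ${\rm Ric}_g=-(n-1)g$. Adapting the calculation of Proposition \ref{geom:bd:det} to $\tau=1,\kappa=0$, I differentiate the MIT bag condition along $X\in\Gamma(T\Sigma)$, use $\nabla^\pm\Psi^h=0$, and deduce $\mathfrak c(\nabla_X\nu+X)\Psi^h=0$; combined with (\ref{re:im}) and the exponential variation of $|\Psi^h|^2$ along geodesics, this yields $\nabla_X\nu=-X$, matching the umbilical model (\ref{horo:ind}). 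An analogue of Proposition \ref{takes:care} in the same degeneration then shows that $\Psi^h|_\Sigma$ is $\nabla^\gamma$-parallel on $\Sigma$, so $(\Sigma,\gamma)$ is Ricci-flat. Since $(\Sigma,\gamma)$ inherits from $g$ an asymptotically flat geometry modeled on $(\mathbb R^{n-1},\delta)$, the Witten identity applied to this parallel spinor forces its ADM mass to vanish, and the rigidity of the asymptotically Euclidean positive mass theorem identifies $(\Sigma,\gamma)$ with $(\mathbb R^{n-1},\delta)$.

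\textbf{Gluing and conclusion.} With both fundamental forms along $\Sigma$ now matching those along $\Sigma_h$, I glue $(M,g,\Sigma)$ to $(\mathbb H^n\setminus{\rm int}\,\mathbb H^n_h,b)$ along $\Sigma\cong\Sigma_h$ to produce a boundaryless asymptotically hyperbolic manifold $(\widehat M,\widehat g)$ modeled on $(\mathbb H^n,b)$; the bootstrap argument of Remark \ref{reg:smooth} applies verbatim since both pieces are Einstein with ${\rm Ric}=-(n-1)g$, yielding smoothness of $\widehat g$. The spinor $\Psi^h$ extends across the seam to a nontrivial type I imaginary Killing spinor on $\widehat M$, so by the Witten-type identity its mass vector vanishes, and the rigidity of the asymptotically hyperbolic positive mass theorem of \cite{chrusciel2003mass} yields $(\widehat M,\widehat g)\cong(\mathbb H^n,b)$. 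Hence $(M,g,\Sigma)\cong(\mathbb H^n_h,b,\Sigma_h)$. The main obstacle I anticipate is the degeneracy of the MIT bag condition relative to the $\theta$-boundary condition---in particular the absence of a chiral splitting---which makes both the sign-matching in (\ref{witten:horo}) and the rigidity computations for $\Sigma$ more delicate, and forces the rigidity step on $\Sigma$ to route through the asymptotically Euclidean PMT rather than the asymptotically hyperbolic PMT that sufficed in the proof of Theorem \ref{main:pmt}.
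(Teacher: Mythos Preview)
Your proposal is correct and would prove the theorem. It faithfully transports the template of Section \ref{proof:main} to the limiting case $\theta=\pi/2$: one imaginary Killing spinor extracted from $\mathfrak m_h=0$ via (\ref{witten:horo}), then the degenerations of Propositions \ref{geom:bd:det} and \ref{takes:care} (which, as you correctly compute, collapse to $\nabla_X\nu=-X$ and $\nabla^\gamma(\Psi^h|_\Sigma)=0$ when $\kappa=0$, $\tau=1$), then the rigidity of the asymptotically Euclidean positive mass theorem on $\Sigma$, and finally the gluing.

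The paper's own argument takes a shortcut that is special to the horospherical case and is not a direct specialization of Section \ref{proof:main}. By Proposition \ref{link:stop:2}, \emph{every} $\Phi\in\mathcal K^{b,\pm,(\pm)_h}(\mathbb S\mathbb H^n_h)$ satisfies $V_\Phi=|u|_\delta^2 V_h$, so the single scalar equation $\mathfrak m_h=0$ forces the right-hand side of (\ref{witten:horo}) to vanish for \emph{all} such $\Phi$ at once. Combined with Proposition \ref{plenty}, this puts the maximal $2^{k-1}$-dimensional family of imaginary Killing spinors on $(M,g)$, which yields directly that $g$ is locally hyperbolic (not merely Einstein) and, via Gauss' equation and the umbilicity, that $\gamma$ is flat (not merely Ricci-flat). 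The identification $(\Sigma,\gamma)=(\mathbb R^{n-1},\delta)$ is then elementary, and no positive mass theorem on $\Sigma$ is invoked at all. Your route is more uniform with the equidistant argument and makes the MIT bag degeneration of Propositions \ref{geom:bd:det}--\ref{takes:care} explicit, at the cost of importing the asymptotically Euclidean rigidity as an extra ingredient; the paper's route is shorter but hinges on the collapse of the isotropic cone $\mathcal C_{b,s}^\uparrow$ to the single null line $[V_h]$, which is exactly what fails for $\theta\in(-\pi/2,\pi/2)$.
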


\begin{proof}
That $\mathfrak m_h\geq 0$ already follows from (\ref{witten:horo}). As for the rigidity statement, if $\mathfrak m_h=0$ then again by (\ref{witten:horo}), this time combined with Propositions \ref{plenty} and \ref{link:stop:2}, $(M,g,\Sigma)$ carries as many imaginary Killing spinors satisfying a MIT bag boundary condition as $(\mathbb H^n_h,b,\Sigma_h)$ does, which implies that $g$ is locally hyperbolic, 
$(\Sigma,\gamma)\hookrightarrow (M,g)$, $\gamma=g|_{\Sigma}$, is totally umbilical (with $H_g=(n-1)$) and
$\gamma$ is flat. It follows that $(\Sigma,\gamma)=(\mathbb R^{n-1},\delta)$ isometrically, so we may glue together  $(M,g,\Sigma)$ and the complement of the corresponding horoball model along the common boundary to conclude, again appealing to \cite{herzlich2016computing,chrusciel2003mass}, that $(M,g,\Sigma)=(\mathbb H^n_h,b,\Sigma_h)$ isometrically.
\end{proof}

 Clearly, the rigidity statement here covers the horospherical case in \cite[Theorem 2]{souam2021mean}; see also the companion result in Remark \ref{ess:pmt} below. We emphasize, however, that it actually provides a much sharper non-deformability result for the embedding $\Sigma_h\hookrightarrow \mathbb H^n$; compare with Remark \ref{ext:best}. 

\begin{remark}\label{ess:pmt}
Essentially the same argument as above yields a mass formula similar to (\ref{witten:horo}) for asymptotically hyperbolic manifolds modeled at infinity on the {\em complement} of a horoball $\mathbb H^n_h$ in $\mathbb H^n$, the only difference being that $H_g-(n-1)$ gets replaced by $H_g+(n-1)$ in the boundary integral. This corresponds to the case $\theta=-\pi/2$ ($\tau=-1$) in the notation of the previous sections, which means that we must use $-{\bf i}\c(\nu)$ as the associated boundary operator. Perhaps the most noticeable modification in the argument occurs while repeating  the proof of Proposition \ref{link:stop:2} with $V_\Phi=\nu'V_\Phi$, where  $\nu'=-\nu=-z_1\partial_{z_1}$ is the appropriate unit normal along the horospherical foliation, 
so we end up with the same conclusion.
\end{remark}

\begin{remark}\label{limit:th}
	The definition (\ref{def:mass:par}) suggests that we should think of $\mathfrak m_h$ as a multiple of $V_h$. On the other hand, we may equip $\mathcal N_{b,h}$ with a {\em degenerate} symmetric bi-linear form, say $\langle\,,\rangle_h$, by declaring that
	$V_h$ is {\em null} (that is, $\langle V_h,V\rangle_h=0$ for any $V\in\mathcal N_{b,h}$) and that $\langle V_{(j)},V_{(j')}\rangle_h=-\delta_{jj'}$, $j,j'\geq 2$, so that  $\mathcal P^h$ acts {\em isometrically} on $(\mathcal N_{b,h},\langle\,,\rangle_h)$ via $\rho^h$. Intuitively, we may view $(\mathcal N_{b,h},\langle\,,\rangle_h)$ as obtained from $(\mathcal N_{b,s},\langle\,,\rangle_s)$ by collapsing its isotropic cone into a single null line (namely, $[V_h]$), so that no time-like direction survives in the process. In terms of the isometry groups of the underlying static spaces, this amounts to passing from $O^\uparrow(1,n-1)$ to $O(n-1)\ltimes \mathbb R^{n-1}$, with this latter group acting on $([V_{(2)},\cdots,V_{(n)}],\langle\,,\rangle_h)=(\mathbb R^{n-1},-\delta)$ by isometries. Using this setup,
	we may interpret Theorem \ref{main:th:horo} as saying that, under the given dominant energy conditions, $\mathfrak m_h$ is null-positive (that is, a positive multiple of $V_h$) unless it vanishes, in which case the underlying manifold is isometric to the model. In this way, Theorem \ref{main:th:horo} may  be regarded as the natural rewording of  Theorem  \ref{main:pmt} as $\theta\to\pi/2$. 
	\end{remark}

\begin{remark}\label{ham:charge}
	The splitting (\ref{sp:irr}) of $\mathcal N^{h}_b$ under the natural $\mathcal P^h$-action suggests defining a complementary (vector-valued) Hamiltonian charge  for any asymptotically hyperbolic manifold modeled at infinity on $\mathbb H^n_h$ (or on its complement) by simply restricting the mass functional to $[V_{(2)},\cdots,V_{(n)}]$. More precisely, in the presence of a chart at infinity $F$, a vector $\mathcal C_{h,F}\in [V_{(2)},\cdots,V_{(n)}]$ is defined by 
	\[
	\langle \mathcal C_{h,F}^a,V_{(a)}\rangle_h=\lim_{ r\to
		+\infty}\left[\int_{S^{n-1}_{ r,+}}\langle\mathbb U(V_{(a)},e),\mu\rangle
	dS^{n-1}_{ r,+}-
	\int_{S^{n-2}_{ r}}V_{(a)}e(\eta,\vartheta)dS^{n-2}_{ r}\right];
	\]
	compare with (\ref{def:mass:par}).
	As expected, $\mathcal C_{h,F}$ transforms as a vector in the representation $\rho^h$ as we pass from one chart at infinity to another. 
	It would be interesting to investigate the basic properties of this invariant and how it relates to the asymptotic geometry of the underlying  manifold. In this regard, we note that \cite[Theorem 2]{chai2021asymptotically} provides an Ashtekar-Hansen-type formula for this invariant (and for our mass $\mathfrak m_h$ as well), very much in the spirit of results found in \cite{de2019mass,chai2022evaluation} for the case $s=0$. Specifically, in $\mathbb R^{1,n}$ consider the vector fields
	\[
	X=C+\langle C ,x\rangle_{1,n}x-Y_{01}
	\] 
	and 
	\[
	X_a=C_a+\langle C_a ,x\rangle_{1,n}x-Y_{01}-Y_{0a},\quad 2\leq a\leq n, 
	\]
	where 
	\[
	C=\partial_{x_0}+\partial_{x_1}, \quad C_a=C+\partial_{a}, 
	\]
	and
	\[
	Y_{01}=x_0\partial_{x_1}+x_1\partial_{x_0}, \quad Y_{0a}=x_0\partial_{x_a}+x_a\partial_{x_0}. 
	\]
Upon restriction to $\mathbb H^n$, $X$ and $X_a$ are conformal fields which remain tangent to $\Sigma_h=\{x_0=x_1\}$. 
	With this notation, and under the obvious identifications induced by $F$, Chai's formulae read as 
	 	\begin{equation}\label{chai:1}
	 \mathfrak m_h=c_n\lim_{ r\to
	 	+\infty}\left[\int_{\mathcal S^{n-1}_{ r,+}}\widetilde E_g(X,\mu)
	 d\mathcal S^{n-1}_{ r,+}-
	 \int_{\mathcal S^{n-2}_{ r}}\widetilde\Pi_g(X,\vartheta)d\mathcal S^{n-2}_{ r}\right],
	 \end{equation}
	 and 
	\begin{equation}\label{chai:2}
	\mathcal C_{h,F}^a=c_n'\lim_{ r\to
		+\infty}\left[\int_{\mathcal S^{n-1}_{ r,+}}\widetilde E_g(X_a,\mu)
	d\mathcal S^{n-1}_{ r,+}-
	\int_{\mathcal S^{n-2}_{ r}}\widetilde\Pi_g(X_a,\vartheta)d\mathcal S^{n-2}_{ r}\right],
	\end{equation}
	where 
	\[
	\widetilde E_g={\rm Ric}_g-\frac{R_g}{2}g-\frac{(n-1)(n-2)}{2}g
	\]
	is the modified Einstein tensor of $g$ and
	\[
	\widetilde \Pi_g=\Pi_g-H_g\gamma+({n-2})\gamma
	\]
	is the modified Newton tensor of the embedding $(\Sigma,\gamma)\hookrightarrow (M,g)$. Also,
$\mathcal S^{n-2}_r=\partial \mathcal S^{n-1}_{r,+}$ and, in the half-space model above, we take $\mathcal S^{n-1}_{r,+}$ to be the Euclidean hemisphere of radius $r>1$ centered at the ``origin'' $(1,0,0,\cdots,0)$ and lying above the horosphere $\Sigma_h=\{z_1=1\}$, so that $\mathcal S^{n-1}_{r,+}$ is a piece of an equidistant hypersurface with mean curvature $H_{(r)}=(n-1)/r$.	
This	shows that $\mathcal C_h$ qualifies as a kind of `'center of mass'' for the given asymptotically hyperbolic manifold. 
	In any case, at least in dimension $n=3$, $\mathcal C_h$ should play a role in solving the {\em relative} isoperimetric problem for large values of the enclosed volume in such manifolds; compare with \cite[Theorem 2.28]{almaraz2020center}. 
\end{remark}

		\bibliographystyle{alpha}
		\bibliography{mass-hyp-bd-tu}

	\end{document}